\documentclass[11pt,reqno]{amsart}

\usepackage[T1]{fontenc}
\usepackage[utf8]{inputenc}
\usepackage{lmodern}
\usepackage{microtype}
\usepackage[margin=1.02in]{geometry}
\usepackage{mathtools,amssymb,amsfonts}
\usepackage{booktabs,array,tabularx,float}
\usepackage{enumitem}
\usepackage{xcolor}
\usepackage{hyperref}
\usepackage[nameinlink,capitalize,noabbrev]{cleveref}

\definecolor{deepblue}{HTML}{17365D}
\definecolor{burgundy}{HTML}{7B2131}
\hypersetup{
  colorlinks=true,
  linkcolor=deepblue,
  citecolor=burgundy,
  urlcolor=deepblue,
  pdftitle={Rainbow spanning configurations in pseudorandom graphs}
}

\newtheorem{theorem}{Theorem}[section]
\newtheorem{lemma}[theorem]{Lemma}

\newtheorem{observation}[theorem]{Observation}
\theoremstyle{definition}
\newtheorem{definition}[theorem]{Definition}
\theoremstyle{remark}
\newtheorem{remark}{Remark}

\newcommand{\eps}{\varepsilon}
\newcommand{\evec}{\vec e}
\newcommand{\cA}{\mathcal A}
\newcommand{\cE}{\mathcal E}
\newcommand{\cF}{\mathcal F}
\newcommand{\cK}{\mathcal K}
\newcommand{\cT}{\mathcal T}
\newcommand{\ind}{\mathbf 1}

\setlist[enumerate]{leftmargin=2.15em,itemsep=0.45em,topsep=0.45em}
\setlist[itemize]{leftmargin=2.0em,itemsep=0.38em,topsep=0.38em}

\allowdisplaybreaks

\title[Rainbow spanning configurations]
{Rainbow spanning configurations in\\ uniformly coloured pseudorandom graphs}
\author[Aigner-Horev, Hefetz, Person, and Trushkin]{Elad Aigner-Horev, Dan Hefetz, Yury Person, and Michael Trushkin}

\begin{document}

\begin{abstract}
We prove a quantitative palette-transference principle for rainbow
spanning configurations in uniformly edge-coloured pseudorandom graphs.
The input to our transference principle is an embedding result of a spanning configuration in an appropriately bijumbled graph $H$ with sufficiently large minimum degree. The output of our transference principle is the asymptotically almost sure existence of a rainbow copy of the same configuration in a uniformly edge-coloured graph $G$ whose bijumbledness and minimum are comparable and sometimes coincide with those of $H$. 

We then apply our transference principle in order to asymptotically almost surely obtain $K_k$-factors, including perfect matchings, Hamilton cycles, and a prescribed bounded-degree spanning tree in bijumbled graphs with appropriate parameters. In all of our results, the palette size exceeds the size of the target configuration by $\varepsilon n$, where $\varepsilon > 0$ is arbitrarily small yet fixed, and $n$ is the order of the configuration.   
\end{abstract}



\maketitle

\section{Introduction} 

We begin by recalling some of the many known results pertaining to the emergence of rainbow configurations in edge-coloured graphs.

\subsection{Rainbow configurations in uniformly coloured random graphs}
Write $\mathbb{G}_{Q}(n,p)$ to denote the binomial random graph
$\mathbb{G}(n,p)$ whose edges are coloured independently and
uniformly at random from a palette of $Q$ colours; this model of random graphs was first studied by Frieze and McKay~\cite{FriezeMcKay1994}. Table~\ref{tab:rainbow-random-graphs} provides a concise summary of the various results pertaining to the emergence of rainbow concrete configurations in $\mathbb{G}_Q(n,p)$. 

\medskip
\begin{center}
\begin{minipage}{\textwidth}
\centering
\refstepcounter{table}
\label{tab:rainbow-random-graphs}
\small
\textbf{Table~\thetable.} Selected benchmark results for uniformly
coloured random graphs

\medskip
\begin{tabularx}{\textwidth}{@{}
 >{\raggedright\arraybackslash}p{0.265\textwidth}
 >{\raggedright\arraybackslash}p{0.235\textwidth}
 >{\raggedright\arraybackslash}X@{}}
\toprule
Authors & Configuration & Sufficient parameters in $\mathbb{G}_{Q}(n,p)$ \\
\midrule
Frieze--McKay \cite{FriezeMcKay1994}
& Some spanning tree
& $Q=n-1$, $p\geq(2+\varepsilon)\log n/n$; in fact, a
hitting-time result \\
Bal--Frieze \cite{BalFrieze2016}
& Perfect matching
& $Q=n/2$, $p\geq C\log n/n$, for even $n$ \\
Ferber--Krivelevich \cite{FerberKrivelevich2016};
Ferber \cite{Ferber2015}
& Hamilton cycle
& $Q=(1+o(1))n$ at
$p\geq(\log n+\log\log n+\omega(1))/n$; 

alternatively, $Q=n$ at $p\geq C\log n/n$ \\
Ferber--Krivelevich \cite{FerberKrivelevich2016} and
Johansson--Kahn--Vu \cite{JohanssonKahnVu2008};
Han--Yuan \cite{HanYuan2025}
& $K_k$-factor, $k\mid n$, where
$h_k=(k-1)n/2$
& $Q=h_k+\varepsilon n$ at
$p\geq C_{k,\varepsilon}n^{-2/k}
(\log n)^{2/[k(k-1)]}$; alternatively, the exact palette
$Q=h_k$ at $p\geq C_k n^{-2/k}\log n$ \\
Bell--Frieze--Marbach \cite{BellFriezeMarbach2024};
Han--Yuan \cite{HanYuan2025}
& A prescribed spanning tree $T$ with $\Delta(T)\leq\Delta$
& $Q=n-1$, $p\geq C_{\Delta}\log n/n$ \\
\bottomrule
\end{tabularx}
\end{minipage}
\end{center}

\medskip

Ferber, Nenadov and Peter~\cite{FerberNenadovPeter2016}
were the first to prove a broad prescribed spanning-graph result. They showed that if $H$ has bounded maximum degree and its maximum average degree is at most some integer $d \geq 2$, then $Q = (1 + \alpha) e(H)$ and
$p \geq n^{-1/d} \log^{5/d}n$ suffice, where $\alpha > 0$ is arbitrarily small. The work of Ferber and Krivelevich~\cite{FerberKrivelevich2016} is of special relevance to this paper as they identified the McDiarmid-type coupling~\cite{McDiarmid1981} as a general tool for obtaining such results. They show that an uncoloured $h$-edge containment result at density $p_0$ transfers to the uniformly $Q$-coloured model at density $Q p_0/(Q-h+1)$, whenever the latter is at most 1.   

Bell, Frieze and Marbach \cite{BellFriezeMarbach2024}, and
subsequently Han and Yuan \cite{HanYuan2025}, developed general
rainbow-threshold theorems for sufficiently well-distributed families
of candidate configurations.  Their method represents the candidates
as the edges of an auxiliary hypergraph and imposes a quantitative
spread condition controlling how many candidates can contain any
prescribed set of graph edges.

\subsection{Rainbow spanning configurations in uniformly coloured randomly
perturbed graphs}

For an $n$-vertex graph $G_0$, write $(G_0 \cup \mathbb{G}(n,p))_Q$ to denote the $n$-vertex union in which every edge, whether belonging to the deterministic seed $G_0$ or to the random perturbation $\mathbb{G}(n,p)$, is coloured independently and uniformly at random from a palette consisting of $Q$ colours.  

The study of (uncoloured) randomly perturbed graphs was initiated by Bohman, Frieze, and Martin~\cite{BohmanFriezeMartin2003}; it were 
Anastos and Frieze \cite{AnastosFrieze2019} who were the first to study the emergence of rainbow configurations in uniformly
coloured randomly perturbed graphs. 



Table~\ref{tab:rainbow-perturbed-graphs} provides a concise summary of some of the known results pertaining to rainbow configurations in uniformly
coloured randomly perturbed graphs. Here,
$\delta, \varepsilon > 0$, $d \geq 2$ and $s \geq 1$ are fixed, and the
constants $C$ may depend on them.

\medskip

\begin{center}
\begin{minipage}{\textwidth}
\centering
\refstepcounter{table}
\label{tab:rainbow-perturbed-graphs}
\small
\textbf{Table~\thetable.} Selected benchmark results for uniformly
coloured randomly perturbed graphs

\medskip
\begin{tabularx}{\textwidth}{@{}
 >{\raggedright\arraybackslash}p{0.215\textwidth}
 >{\raggedright\arraybackslash}p{0.215\textwidth}
 >{\raggedright\arraybackslash}p{0.125\textwidth}
 >{\raggedright\arraybackslash}p{0.135\textwidth}
 >{\raggedright\arraybackslash}X@{}}
\toprule
Authors & Configuration & Palette $Q$ & Perturbation $p$ & Deterministic seed $G_0$ \\
\midrule
Anastos--Frieze \cite{AnastosFrieze2019}
& $s$ edge-disjoint Hamilton cycles
& $Q\geq(120-20\log\delta)n$
& $p\geq C_{\delta,s}/n$
& arbitrary, with $\delta(G_0)\geq\delta n$,
  $0<\delta<1/2$ \\
Aigner-Horev--Hefetz \cite{AignerHorevHefetz2021}
& Hamilton cycle
& $Q=(1+\varepsilon)n$
& $p\geq C_{\delta,\varepsilon}/n$
& arbitrary, with $\delta(G_0)\geq\delta n$ \\
Katsamaktsis--Letzter--Sgueglia
\cite{KatsamaktsisLetzterSgueglia2024}
& Hamilton cycle
& $Q=n$
& $p\geq C_{\delta}/n$
& arbitrary, with $\delta(G_0)\geq\delta n$ \\
Aigner-Horev--Hefetz--Lahiri
\cite{AignerHorevHefetzLahiri2023}
& A prescribed spanning tree $T$, $\Delta(T)\leq d$
& $Q=(1+\varepsilon)n$
& $p=\omega(1)/n$
& arbitrary, with $\delta(G_0)\geq\delta n$ \\
Aigner-Horev--Hefetz--Lahiri
\cite{AignerHorevHefetzLahiri2023}
& Some spanning tree
& $Q=n-1$
& $p=\omega(n^{-2})$
& arbitrary, with $\delta(G_0)\geq\delta n$ \\
Katsamaktsis--Letzter--Sgueglia
\cite{KatsamaktsisLetzterSgueglia2025}
& A prescribed spanning tree $T$, $\Delta(T)\leq d$
& $Q=n-1$
& $p\geq C_{\delta,d}/n$
& arbitrary, with $\delta(G_0)\geq\delta n$ \\
\bottomrule
\end{tabularx}
\end{minipage}
\end{center}

\medskip

The prescribed tree in the fourth and sixth rows is fixed before the
random perturbation and colouring are sampled; simultaneous rainbow
universality is not asserted. Following the work of Ferber and Krivelevich~\cite{FerberKrivelevich2016}, Katsamaktsis, Letzter and Sgueglia~\cite{KatsamaktsisLetzterSgueglia2025} established a general McDiarmid-type transference principle for randomly perturbed graphs. Roughly put, it asserts that after independently thinning the deterministic seed, an uncoloured perturbed containment result transfers, with a constant factor increase in the perturbation probability, to a rainbow result with $(1 + \varepsilon) e(H)$ colours, where $\varepsilon > 0$ is arbitrarily small yet fixed.  

For the sake of brevity, the table is restricted to undirected hosts.  In the directed setting, Katsamaktsis, Letzter and Sgueglia~\cite{KatsamaktsisLetzterSgueglia2024} proved the corresponding
exact-palette Hamiltonicity result under a linear minimum semi-degree
condition; and Krueger and Staudinger~\cite{KruegerStaudinger2026}
subsequently obtained, with $n$ colours and $p=C/n$, simultaneous
rainbow copies of every orientation of every cycle of every possible
length.

\subsection{Pseudorandom graphs} For a
graph $G$ and sets $X,Y \subseteq V(G)$, define the ordered edge
count
$$
 \evec_G(X,Y) := \bigl|\{(x,y) \in X \times Y : xy \in E(G)\}\bigr|;
$$
note that edges spanned by $X\cap Y$ are counted twice. If $X$ and $Y$ are disjoint, then $e_G(X,Y) = \evec_G(X,Y)$; we thus use either notation.

The following definition of bijumbled graphs was introduced in~\cite{KoRoScSiSk2007}. 
\begin{definition}[Bijumbledness]
\label{def:jumbledness}
Let $n$ be a positive integer and let  $0 < p := p(n) \leq 1$ and $\beta := \beta(n) \geq 0$ be real numbers. An $n$-vertex graph $G$
is said to be \emph{$(p,\beta)$-bijumbled} if
$$
 \left|
  \evec_G(X,Y)-p|X||Y|
 \right|
 \leq
 \beta\sqrt{|X||Y|}
$$
for every $X,Y\subseteq V(G)$.  If
$G=(U,W;E)$ is bipartite, the same terminology means that
$$
 \left|
  e_G(X,Y)-p|X||Y|
 \right|
 \leq
 \beta\sqrt{|X||Y|}
$$
for every $X\subseteq U$ and $Y\subseteq W$.
\end{definition}

 An \emph{$(n,d,\lambda)$-graph} is an $n$-vertex $d$-regular
graph whose second largest eigenvalue in absolute value is $\lambda$. A consequence of the {\sl expander mixing lemma}~\cite{AlonChung1988} (see~\cite[Corollary~9.2.5]{AlonSpencer2016} as well) is that every such
graph is $(d/n,\lambda)$-bijumbled.  In the sequel, spectral notation is employed to report on previous results only; all of our contributions are stated for the more general class of bijumbled graphs. 

\subsection{Clique factors in pseudorandom graphs}

For an integer $k \geq 2$, a \emph{$K_k$-factor} of an $n$-vertex \emph{host} graph is a collection of vertex-disjoint copies of $K_k$, spanning all vertices of the host; in particular, a $K_2$-factor is a perfect matching. A prerequisite for the existence of such a configuration in any $n$-vertex graph is that $k \mid n$.

Table~\ref{tbl:Kk-factors} records the progression of results pertaining to the emergence of clique-factors in bijumbled graphs. The table does not account for the special case of perfect matchings. Moreover, it does not record the full formulations of the results listed; it keeps track only over the bound imposed on $\beta$ for each result, as this bound is deemed the most informative. In the first row, the host in the underlying result is an $(n,pn,\beta)$-graph; all other results are explicitly mentioned in the corresponding papers to hold for bijumbled graphs. In each row, $c>0$ is sufficiently small, with the precise dependence specified in the cited theorem.


\begin{table}[H]
\centering
\refstepcounter{table}
\small
\textbf{Table~\thetable.} Emergence of $K_k$-factors, $k \geq 3$ fixed, in bijumbled graphs
\label{tbl:Kk-factors}
\begin{tabularx}{\textwidth}{@{}
  >{\raggedright\arraybackslash}p{0.39\textwidth}
  >{\raggedright\arraybackslash}p{0.20\textwidth}
  >{\raggedright\arraybackslash}X@{}}
\toprule
Authors & Host requirements & Configuration \\
\midrule

Krivelevich--Sudakov--Szab\'o
\cite{KrivelevichSudakovSzabo2004}
&
$\displaystyle \beta\leq cp^3n/\log n$
&
$K_3$-factor
\\

Allen--B\"ottcher--H\`an--Kohayakawa--Person
\cite{AllenBottcherHanKohayakawaPerson2017}
&
$\displaystyle \beta\leq cp^{5/2}n$
&
Square of a Hamilton cycle; hence a $K_3$-factor when $3\mid n$
\\

Allen--B\"ottcher--H\`an--Kohayakawa--Person
\cite{AllenBottcherHanKohayakawaPerson2017}
&
$\displaystyle \beta\leq cp^{3k/2}n$
&
$k$th power of a Hamilton cycle; hence a $K_{k+1}$-factor when $(k+1)\mid n$
\\

Nenadov
\cite{Nenadov2019}
&
$\displaystyle \beta\leq cp^2n/\log n$
&
$K_3$-factor
\\

Han--Kohayakawa--Morris--Person
\cite{HanKohayakawaMorrisPerson2019}
&
$\displaystyle \beta\leq cp^kn$
&
$K_k$-factor,  $k\geq 3$
\\

Han--Kohayakawa--Person
\cite{HanKohayakawaPerson2021}
&
$\displaystyle \beta\leq cp^{k-1}n$
&
$K_k$-packing missing at most
$n^{1-1/(8k^4)}$ vertices 
\\

Morris
\cite{Morris2025}
&
$\displaystyle \beta\leq cp^{k-1}n$
&
$K_k$-factor, $k\geq 3$
\\

\bottomrule
\end{tabularx}
\end{table}

All upper bounds on $\beta$, seen in Table~\ref{tbl:Kk-factors}, carry with them an implicit density requirement arriving from the so-called {\sl Alon-Boppana-Spencer inequality}~\cite{ABS98}, asserting that $\beta = \Omega(\sqrt{pn})$. Throughout, we tacitly assume that $\beta \geq C \sqrt{pn}$ for some sufficiently large constant $C > 0$.  


For triangle-factors, the triangle-free pseudorandom graphs constructed by Alon~\cite{Alon1994} establish the sharpness of the order $p^2 n$. For $k \geq 4$, determining whether the order $p^{k-1} n$ is sharp remains an open problem. The currently best lower bound of $\beta = \Omega(p^{k/2} n)$ stems from a recent construction of $K_k$-free pseudorandom graphs of density $\Theta\left(n^{-1/(k-1)}\right)$ due to Bishnoi, Ihringer, and Pepe~\cite{BishnoiIhringerPepe2020}.


For future reference, we conclude this section with a formulation of the aforementioned result of Morris.

\begin{theorem}[{\cite[Theorem~1.4]{Morris2025}}]
\label{thm:morris}
For every integer $k\geq3$ and every $\zeta>0$, there exists an 
$\eta : = \eta(k,\zeta) > 0$ such that any $n$-vertex
$(p,\beta)$-bijumbled graph $F$ satisfying
$$
 \delta(F) \geq \zeta p n
 \qquad \textrm{and} \qquad
 \beta\leq\eta p^{k-1}n
$$
contains a $K_k$-factor, whenever $k\mid n$.
\end{theorem}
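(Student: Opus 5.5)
This statement is cited from Morris~\cite{Morris2025} and the paper uses it as a black box, so I would not reprove it here. Still, the natural route to a proof is the absorbing method adapted to sparse pseudorandom hosts. The plan has four steps.

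\textbf{Step 1 (counting tool).} Establish a counting lemma for cliques in $(p,\beta)$-bijumbled graphs with $\beta\le\eta p^{k-1}n$. For any vertex sets $X_1,\dots,X_k$ of linear size, the number of copies of $K_k$ with one vertex in each $X_i$ should be $(1\pm o(1))p^{\binom k2}\prod|X_i|$. I would prove this by induction on $k$, restricting to common neighbourhoods. The condition $\beta\le\eta p^{k-1}n$ is exactly what keeps $(k-1)$-fold common neighbourhoods of typical vertices of size $\approx p^{k-1}n$. The minimum degree condition $\delta(F)\ge\zeta pn$ supplies a rooted version: every vertex $v$ lies in many copies of $K_k$. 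This needs only $N(v)$, of size at least $\zeta pn$, to be "pseudorandom enough" to contain many copies of $K_{k-1}$. That holds because $\beta\le\eta p^{k-1}n=\eta p^{k-2}\cdot pn$ is small relative to the $K_{k-1}$-counting threshold inside a set of size $\zeta pn$.

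\textbf{Step 2 (absorber).} Build an absorbing structure $A$ of size $\ll n$ with the following property: for every small set $L$ with $k\mid |A|+|L|$, the graph $F[A\cup L]$ has a $K_k$-factor. I would use the template/"distributive absorber" approach (Montgomery-style) with local absorbers for $k$-sets or for single vertices. Their existence follows from rooted counts: a local absorber for $v$ is a gadget in which $v$ can be swapped in via $K_k$'s sharing $k-1$ vertices. The counting in Step 1 gives many such gadgets for every $v$, and a random or greedy selection yields a vertex-disjoint family covering all vertices.

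\textbf{Step 3 (almost-cover and finish).} Remove $A$ and find a $K_k$-packing covering all but $o(n)$ vertices of the rest. This can come from a fractional-to-integral argument (random greedy or R\"odl nibble) driven by the uniform clique counts of Step 1, or by quoting the Han--Kohayakawa--Person almost-factor result. Absorb the leftover into $A$. Note that the leftover must be small enough that the absorber handles it, so the almost-cover must leave at most roughly $\eps' pn$ or a comparable amount uncovered.

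\textbf{Main obstacle.} The hardest step is the absorber under only $\beta\le\eta p^{k-1}n$ rather than $p^k n$. Local absorbers naturally require counting copies of $K_k$ inside neighbourhoods of two or more vertices, which costs an extra factor of $p$. To avoid this, I would design gadgets in which each vertex is used only through $K_{k-1}$'s in a single neighbourhood, so that counts stay within the $p^{k-1}$ regime. I would then chain such gadgets via short "switching paths" of cliques, trading an extra $p$ for path length.
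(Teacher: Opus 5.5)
You are right that the paper does not prove this statement. It is quoted from Morris and used only as the deterministic black box in the $k\geq 3$ case of Theorem~\ref{thm:rainbow-factors}, so deferring to the citation is exactly what the paper does. The sketch you add, however, is not a proof: its decisive step fails as written.

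In Step~2 you say that the counts of Step~1 give local absorbers, in which a vertex is swapped via two copies of $K_k$ sharing $k-1$ vertices.
\begin{itemize}
\item Such a switcher for a pair $x,y$ is a copy of $K_{k-1}$ inside $N(x)\cap N(y)$, a set of size about $p^2n$.
\item The greedy counting you describe finds a $K_{k-1}$ inside a set $Z$ only when $\beta\ll p^{k-2}|Z|$. Here that means $\beta\ll p^kn$, which is the older Han--Kohayakawa--Morris--Person bound in Table~\ref{tbl:Kk-factors}, not the bound $\beta\leq\eta p^{k-1}n$ of the statement.
\item This is not just a weakness of the method. For $k=3$, delete every edge inside a prescribed $Z=N(x)\cap N(y)$ with $p|Z|\leq\beta$. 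Each $\evec_G(X,Y)$ changes by at most $(p|Z|+\beta)\sqrt{|X||Y|}\leq 2\beta\sqrt{|X||Y|}$, and degrees drop by at most $|Z|\leq\beta/p\leq\eta pn$. So the hypotheses survive up to constants, while $Z$ becomes independent. Once $p^3n\ll\beta$, the hypotheses do not control edges in common neighbourhoods.
\item The only rooted statement you establish at the $p^{k-1}$ scale is that there are many copies of $K_k$ through a single vertex. That says nothing about switchers.
\end{itemize}

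Your ``main obstacle'' paragraph sees this problem but does not resolve it. You describe no gadget that moves a vertex using only cliques inside single neighbourhoods. Chaining switches $x_0\to x_1\to\dots\to x_t$ does not trade a factor of $p$ for length, because every link again needs a $K_{k-1}$ in $N(x_{i-1})\cap N(x_i)$. Longer chains buy connectivity, not a better exponent. Building absorbing structures that survive at $\beta\leq\eta p^{k-1}n$ is exactly the new content of Morris's paper, and that idea is missing from your sketch.

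Step~3 also has a mismatch. If you quote Han--Kohayakawa--Person, up to $n^{1-1/(8k^4)}$ vertices stay uncovered. Yet the hypotheses allow $pn$ as small as order $n^{1-1/(2k-3)}$; this is the floor coming from Observation~\ref{obs:density-floor}. Near that floor, the leftover exceeds the absorbing capacity of order $\eps' pn$ that you require by a polynomial factor.

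A minor point: with $\eta$ fixed, Step~1 gives counts within a factor $1\pm\eps(\eta)$, not $1\pm o(1)$.
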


\subsection{Bounded-degree spanning trees in pseudorandom graphs}

For integers $n > \Delta \geq 2$, let $\cT(n,\Delta)$ denote the
family of all $n$-vertex trees of maximum degree at most $\Delta$.
A graph is said to be \emph{$\cT(n,\Delta)$-universal} if it contains a copy
of every member of $\cT(n,\Delta)$.  

Table~\ref{tbl:trees} accounts for results pertaining to the emergence of spanning trees in pseudorandom graphs. As in Table~\ref{tbl:Kk-factors}, we focus on the pseudorandomness condition stipulated by the relevant results and omit additional requirements such as minimum degree conditions.  Throughout Table~\ref{tbl:trees}, $\Delta$ is assumed to be independent of $n$, and $C_\Delta > 0$ is {\sl some} constant depending solely on $\Delta$, whose value may alter between the various rows of the table. The results specified in the first, fourth, fifth, and sixth rows of Table~\ref{tbl:trees} are phrased for $(n,d,\lambda)$-graphs only; the result seen in the second row fits a certain notion of expander graphs, and the result appearing in the third row applies to bijumbled graphs. 

\begin{table}[H]
\centering
\refstepcounter{table}
\small
\textbf{Table~\thetable.} Tree-embedding results in pseudorandom graphs
\label{tbl:trees}
\begin{tabularx}{\textwidth}{@{}
  >{\raggedright\arraybackslash}p{0.31\textwidth}
  >{\raggedright\arraybackslash}p{0.35\textwidth}
  >{\raggedright\arraybackslash}X@{}}
\toprule
Authors & Host requirements & Tree embedding \\
\midrule

Alon--Krivelevich--Sudakov
\cite{AlonKrivelevichSudakov2007};
Balogh--Csaba--Pei--Samotij
\cite{BaloghCsabaPeiSamotij2010}
&
$\displaystyle \beta\leq \nu pn/\sqrt{8\Delta}$
&
Every tree of order at most $(1-\nu)n$ and maximum degree at most
$\Delta$
\\

Johannsen--Krivelevich--Samotij
\cite{JohannsenKrivelevichSamotij2013}
&
Explicit small- and large-set expansion conditions
&
$\cT(n,\Delta)$-universality
\\

Han--Yang
\cite{HanYang2022}
&
$\displaystyle
  \beta\leq pn/(2\Delta^{5\sqrt{\log n}})
$
&
$\cT(n,\Delta)$-universality
\\

Pavez-Sign\'e
\cite{PavezSigne2023}
&
$\beta\leq C_\Delta pn$
&
Every bounded-degree spanning tree with linearly many leaves
\\

Pavez-Sign\'e
\cite{PavezSigne2023}
&
$\beta\leq C_\Delta pn$
&
The square $G^2$, rather than $G$, is
$\cT(n,\Delta)$-universal
\\

Hyde--Morrison--M\"uyesser--Pavez-Sign\'e
\cite{HydeMorrisonMuyesserPavezSigne2025}
&
$\displaystyle
  \beta\leq pn/[C_\Delta \log^3 n]
$
&
$\cT(n,\Delta)$-universality
\\

\bottomrule
\end{tabularx}
\end{table}


Since the result of~\cite{HydeMorrisonMuyesserPavezSigne2025}, appearing in the last row of Table~\ref{tbl:trees}, requires the host graph to be regular, it is not suited for our needs. On the other hand, the result of Han and Yang~\cite{HanYang2022} does fit our transference principle and reads as follows.

\begin{theorem}[{\cite[Theorem~1.5]{HanYang2022}}]
\label{thm:han-yang}
Let $\Delta \geq 2$ be an integer and let $n$ be a sufficiently large integer. Then, any $n$-vertex $(p,\beta)$-bijumbled graph $F$ satisfying
$$
 \delta(F)\geq4\sqrt{\beta pn}
 \qquad \textrm{and} \qquad \beta\leq\frac{pn}{4\Delta^{5\sqrt{\log n}}},
$$
is $\cT(n,\Delta)$-universal.
\end{theorem}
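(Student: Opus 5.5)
Since this is a cited result (\cite[Theorem~1.5]{HanYang2022}), the plan below reconstructs a proof along the now-standard route for spanning bounded-degree trees in expanders: the structural dichotomy for trees, the extendability method of Friedman--Pippenger/Haxell (as developed by Glebov, Johannsen and Krivelevich), and a final step that either matches leaves or connects bare paths.

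\emph{Step 1 (expansion from bijumbledness).} First I will turn the hypotheses into expansion properties of $F$. Let $X$ be a set with $|X|\le x$, where $x$ is roughly $\beta/p$ up to a factor $\Delta^{O(\sqrt{\log n})}$. Suppose $|N(X)|<2\Delta|X|$. Applying bijumbledness to $X$ and $N(X)\cup X$ gives
\[
\delta(F)|X|\le \evec_F(X,X\cup N(X))\le p|X|\cdot 3\Delta|X|+\beta|X|\sqrt{3\Delta}.
\]
Since $\delta(F)\ge 4\sqrt{\beta pn}$, which for small sets dominates both terms on the right, this is a contradiction. Hence $|N(X)|\ge 2\Delta |X|$ for such $X$. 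For large sets, bijumbledness with $\beta\le pn/(4\Delta^{5\sqrt{\log n}})$ shows that any two disjoint sets of size at least $\beta/p$ (up to the same factor) span an edge. These two facts are exactly the input required for $(d,m)$-extendability in the sense of Glebov--Johannsen--Krivelevich.

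\emph{Step 2 (almost-spanning trees with extendability).} Using Step 1, I will embed forests greedily while keeping the partial image extendable: every subset of size at most $m$ still has at least $d|X|$ free neighbours. Vertex-by-vertex extensions and removal of leaves both preserve this property. This yields an embedding of any tree of size $n-\Theta(n/\Delta^{c\sqrt{\log n}})$ together with control over where designated vertices land. Iterating over about $\sqrt{\log n}$ scales, each losing a factor $\Delta^{O(1)}$ in the expansion ratio, is precisely where the term $\Delta^{5\sqrt{\log n}}$ comes from.

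\emph{Step 3 (structural dichotomy and completion).} By Krivelevich's lemma, every $T\in\cT(n,\Delta)$ has either at least $\gamma n$ leaves or at least $\gamma n/k$ vertex-disjoint bare paths of length $k$.
\begin{itemize}
\item \emph{Many leaves.} Remove a set $L$ of leaves, chosen so that their parents are distinct up to multiplicity at most $\Delta$. Embed $T-L$ by Step 2, with the image of $T-L$ avoiding a random reservoir. Then find a $\Delta$-fold matching from the parent images to the remaining vertices using Hall's condition: small sets are handled by the expansion of Step 1 together with the degree condition, and large sets by the edge property.
\item \emph{Many bare paths.} Delete the interiors of the bare paths and embed the resulting forest by Step 2. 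Then join the required pairs of endpoints with vertex-disjoint paths of prescribed length that cover the remaining vertices exactly. I would do this with a connecting lemma in extendable graphs (Montgomery, Glebov--Johannsen--Krivelevich), combined with an absorbing path structure reserved in advance.
\end{itemize}

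The main obstacle is this last spanning connection step: covering \emph{exactly} the leftover vertices with paths of exact lengths, under only the weak expansion afforded by $\delta(F)\ge 4\sqrt{\beta pn}$. I would first try the Montgomery-style absorber built from a sorting network, which absorbs any small set of leftover vertices. This must be done at every scale of the iteration while preserving extendability, and the bookkeeping across scales is what costs the $\Delta^{5\sqrt{\log n}}$ factor.
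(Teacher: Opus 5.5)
\textbf{The paper has no proof to compare against.} This statement is quoted from Han and Yang and used only as the deterministic black box in the proof of Theorem~\ref{thm:rainbow-trees}. Your outline therefore has to stand on its own, and it does not yet do so.

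Step~1 is fine. Your computation actually gives $|\Gamma_F(X)|\geq 2\Delta|X|$ for all $|X|<\frac{2}{3\Delta}\sqrt{\beta n/p}$, which is well beyond the joinedness scale $\beta/p$.

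\textbf{The many-leaves case.} Your Hall argument fails as stated. Step~1 controls $\Gamma_F(X)$ in all of $V(F)$. Hall's condition instead concerns $\Gamma_F(X)\cap W$ for sets $X$ of parent images, and $\Gamma_F(Y)\cap P$ for $Y\subseteq W$. Here $P$ and the uncovered set $W$ are produced by the embedding.
\begin{itemize}
\item If the reservoir has exactly $|L|$ vertices, then embedding $T-L$ into its complement is again a spanning problem, which Step~2 does not solve.
\item If the reservoir is smaller, then $W$ also contains the $|L|-|R|$ uncovered vertices outside it. For Step~2 to apply there must be at least of order $\Delta\beta/p$ of them. Nothing prevents such a vertex from having all its neighbours among images of non-parents.
\end{itemize}
Neither hypothesis excludes this. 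The bijumbledness bound $e_F(\{w\},P)\geq p|P|-\beta\sqrt{|P|}$ is positive only when $|P|>(\beta/p)^2$. That exceeds $n$ as soon as $\beta/p>\sqrt n$, and $\beta/p$ may be as large as $n/(4\Delta^{5\sqrt{\log n}})$. Meanwhile, $\delta(F)\geq 4\sqrt{\beta pn}$ says nothing about degrees into $P$. You would need an embedding that controls where the leftover lies, or where the parents land, and that is where the real work is.

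\textbf{The bare-paths case.} You explicitly leave the decisive step as ``the main obstacle'': linking the endpoint pairs by paths of prescribed lengths that cover the leftover \emph{exactly}. You only name a tool you would try. Nothing in the proposal shows that sorting-network absorbers exist in $F$ under these hypotheses: the host may be irregular, and its minimum degree may lie far below $pn$. The sorting-network implementation for pseudorandom hosts~\cite{HydeMorrisonMuyesserPavezSigne2025} is formulated for regular graphs. That is precisely why the paper imports Han--Yang instead.

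\textbf{The $\Delta^{5\sqrt{\log n}}$ accounting.} Your explanation of the hypothesis is inconsistent with your own Step~2. The extendability lemma embeds trees of order $n-O(\Delta\beta/p)$ in a single pass, with no iteration over scales and no $\Delta^{O(\sqrt{\log n})}$ loss. The known sorting-network arguments also lose only polylogarithmic factors. The multi-scale scheme that is supposed to produce $\Delta^{5\sqrt{\log n}}$ is never specified. So the proposal neither proves the theorem nor explains its hypothesis; the missing piece is the entire spanning-completion step.
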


\subsection{Hamilton cycles in pseudorandom graphs}
Given a graph $G = (V,E)$ and a set $X \subseteq V$, let
$$
 \Gamma_G(X) = \{v \in V \setminus X : xv \in E \textrm{ for some } x\in X\}
$$
denote the {\em external neighbourhood} of $X$ in $G$.

\begin{definition}[$C$-expander]
\label{def:C-expander}
An $n$-vertex graph $H$ is said to form a 
\emph{$C$-expander}, where $C > 0$, if both of the following assertions hold.
\begin{enumerate}[label=\textup{(\alph*)}]
\item $|\Gamma_H(X)| \geq C|X|$ for every
      $X \subseteq V(H)$ of size $|X| < n/(2C)$.
\item There is an edge of $H$ between every two disjoint sets
      $X,Y \subseteq V(H)$ of sizes $|X|,|Y|\geq n/(2C)$.
\end{enumerate}
\end{definition}

Table~\ref{tbl:hamilton} records the progression of results pertaining to the emergence of Hamilton cycles in pseudorandom graphs. As in the previous two tables, the results appear in abbreviated form.

\begin{table}[H]
\centering
\refstepcounter{table}
\small
\textbf{Table~\thetable.} Hamiltonicity results in pseudorandom graphs
\label{tbl:hamilton}
\begin{tabularx}{\textwidth}{@{}
  >{\raggedright\arraybackslash}p{0.34\textwidth}
  >{\raggedright\arraybackslash}p{0.37\textwidth}
  >{\raggedright\arraybackslash}X@{}}
\toprule
Authors & Host requirements & Hamiltonicity conclusion \\
\midrule

Krivelevich--Sudakov
\cite{KrivelevichSudakov2003}
&
$\displaystyle
  \beta\leq
  \frac{pn(\log\log n)^2}
       {1000\log n\,\log\log\log n}
$
&
Every $(n,d,\lambda)$-graph is Hamiltonian
\\

Hefetz--Krivelevich--Szab\'o
\cite{HefetzKrivelevichSzabo2009}
&
Explicit expansion hypotheses, satisfied by certain
$\log^{1-o(1)}n$-expanders
&
Every such expander is Hamiltonian
\\

Han--Kohayakawa--Morris--Person
\cite{HanKohayakawaMorrisPerson2021}
&
$\delta(G)\geq\zeta pn$ and
$\displaystyle \beta\leq\eta p^2n/\log n$
&
Every prescribed $2$-factor, including a Hamilton cycle
\\

Glock--Munh\'a Correia--Sudakov
\cite{GlockMunhaCorreiaSudakov2023}
&
$\displaystyle \beta\leq \frac{pn}{C\log^{1/3}n}$
&
Every $(n,d,\lambda)$-graph is Hamiltonian
\\

Ferber--Han--Mao--Vershynin
\cite{FerberHanMaoVershynin2024}
&
$pn\geq\log^6 n$ and
$\beta\leq pn/70000$
&
Every sufficiently large $(n,d,\lambda)$-graph is Hamiltonian
\\

Morris
\cite{Morris2025}
&
$\delta(G)\geq\zeta pn$ and
$\beta\leq\eta p^2n$
&
Every prescribed graph of maximum degree at most $2$
\\

Dragani\'c--Montgomery--Munh\'a Correia--Pokrovskiy--Sudakov
\cite{DraganicMontgomeryMunhaCorreiaPokrovskiySudakov2024}
&
$C$-expansion for a sufficiently large constant $C$;
spectrally, $\beta\leq cpn$
&
Every such expander is Hamiltonian
\\

\bottomrule
\end{tabularx}
\end{table}


The result of Dragani\'c, Montgomery, Munh\'a Correia, Pokrovskiy and Sudakov~\cite{DraganicMontgomeryMunhaCorreiaPokrovskiySudakov2024}, appearing in the last row of Table~\ref{tbl:hamilton}, facilitates our arguments in the sequel; it reads as follows. 


\begin{theorem}[{\cite[Theorem~1.4]{DraganicMontgomeryMunhaCorreiaPokrovskiySudakov2024}}]
\label{thm:ham-expander}
Every $C$-expander
is Hamiltonian, whenever $C$ is a sufficiently large constant. 
\end{theorem}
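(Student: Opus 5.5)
Since \cref{thm:ham-expander} is quoted from~\cite{DraganicMontgomeryMunhaCorreiaPokrovskiySudakov2024} and used as a black box, what follows is only an outline of how I would reprove it; the real difficulty lies in carrying out the absorption step with nothing but constant expansion available.

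The overall strategy is \emph{absorption combined with extendable embeddings}. The classical Pósa rotation--extension technique alone is not enough, because it needs expansion of order $\log n$ to boost a longest path into a cycle. With only constant expansion one must instead work with \emph{$(d,m)$-extendable} subgraphs, in the sense of Friedman--Pippenger and Glebov--Johannsen--Krivelevich. These are subgraphs $S\subseteq H$ of maximum degree at most $d$ such that every set $X$ with $|X|\le 2m$ satisfies $|\Gamma_H(X)\setminus V(S)|\ge (d-1)|X| - \sum_{x\in X\cap V(S)}(d_S(x)-1)$. The first step is to record the basic toolkit that \cref{def:C-expander} provides when $m\approx n/(2C)$ and $d$ is a large constant much smaller than $C$:
\begin{enumerate}[label=\textup{(\roman*)}]
\item a single vertex is extendable;
\item one may add a leaf to an extendable $S$ while preserving extendability, as long as $|V(S)|\le n-\Omega(dm)$;
\item one may remove leaves (``rollback'') while preserving extendability;
\item any two vertices of an extendable $S$ can be joined by a path of length $O(\log n)$ through unused vertices while preserving extendability. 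Here one grows BFS-trees from both endpoints using (ii), and property (b) of \cref{def:C-expander} guarantees that the two grown sets meet.
\end{enumerate}

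Using (ii)--(iv), I would then build an extendable linear forest, i.e.\ a path system, greedily covering all but a set $L$ of $O(n/C)$ vertices. The path system has few components, and consecutive paths are linked via (iv). This step is routine; note that the leftover cannot be made smaller than about $dm$, so it is genuinely linear in $n$.

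The main obstacle is absorbing a \emph{linear-size} leftover set $L$. Before the path cover, I would reserve a small random-like set $R$ together with an \emph{absorbing structure} $A$. This is a path system that can be rerouted to contain any prescribed subset of a designated flexible set $F$ of size comparable to $|L|$. I would build it from a sorting-network (or bipartite-template) gadget of bounded degree, whose edges are realised by short extendable connecting paths obtained via (iv). The leftover $L$ is then matched into $F$ in two stages:
\begin{enumerate}[label=\textup{(\alph*)}]
\item using expansion, find vertex-disjoint short paths, linked into the cover, that route each vertex of $L$ into distinct ``slots'';
\item use the template's flexibility to absorb exactly the slots that were used, and skip the unused ones.
\end{enumerate}
The delicate point is keeping the whole construction inside one extendable graph throughout, so that (iv) can still be applied with few free vertices left. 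I would try to handle this by interleaving rollback (iii) with the final linking steps, so that the degree budget of the vertices in $A$ is never exhausted. Closing the final path through $A$ with one more application of (iv) then yields the Hamilton cycle.
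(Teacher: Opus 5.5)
There is no in-paper proof to compare with. The statement is imported from \cite{DraganicMontgomeryMunhaCorreiaPokrovskiySudakov2024} and used only as a black box in Section~\ref{sec:rainbow-hamilton}, so your outline has to stand on its own. It does not: it breaks down at exactly the step you call the main obstacle, and for a counting reason. Your absorber is a bounded-degree template or sorting network whose flexible set $F$ has size comparable to $|L|=\Theta(dm)$, which is linear in $n$. It therefore has $\Omega(|F|)$ edges; a sorting network has even $\Omega(|F|\log|F|)$ comparators. You propose to realise each of these edges by a connecting path from tool~(iv) between prescribed endpoints, and tool~(iv) only promises length $O(\log n)$. This bound is genuinely the right order. $C$-expanders can have bounded maximum degree $D=D(C)$; random $D$-regular graphs with $D$ large in terms of $C$ are an example. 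In such a graph, for every vertex $v$, all but $O(\sqrt n)$ vertices lie at distance greater than $\tfrac12\log_D n$ from $v$. So unless the endpoints are placed with a precision that (iv) does not provide, the connecting paths have length $\Omega(\log n/\log D)$. The absorbing structure alone would then need $\Omega(|L|\log n/\log D)\gg n$ vertices. Step~(a), which asks for $|L|$ vertex-disjoint routing paths, hits the same bound. Gadgets of logarithmic size can absorb at most $O(n/\log n)$ vertices, whereas an extendability-based cover leaves $\Omega(dm)$ vertices uncovered. Nothing in your outline bridges these two scales.

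The final linking cannot be rescued by rollback either. Let $S$ be a $(d,m)$-extendable linear forest, and let $X\subseteq V(S)$ be any set of $2m$ of its vertices. Every $x\in X$ has $d_S(x)-1\leq 1$, so the defining inequality already gives $n-|V(S)|\geq|\Gamma_H(X)\setminus V(S)|\geq 2(d-2)m$. Thus no extendable path system can come within $\Omega(dm)$ vertices of being spanning, however the degree budget of $A$ is managed. Rollback~(iii) only frees vertices that must then be covered again by the same tools. The last linear-size portion of $V(H)$ therefore has to be covered by a mechanism that neither relies on an extendability reservoir nor spends $\Theta(\log n)$ vertices per absorbed vertex. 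Supplying such a mechanism is exactly the difficulty the cited theorem overcomes, and your proposal does not contain it.
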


\subsection{Our rainbow embedding results}

Let $\cK$ be a palette of $Q$ colours. A
\emph{$(Q,\cK)$-uniform colouring}, abbreviated to $Q$-uniform hereafter, assigns every edge of a graph $G$ a colour, chosen independently and uniformly at random from $\cK$. A subgraph of $G$ is then called   \emph{rainbow} (with respect to said colouring) provided that its edges receive pairwise distinct colours. Throughout, we write that an event holds {\sl asymptotically almost surely} (a.a.s.,\ hereafter) to denote
that the probability of said event tends to 1 as $n$ tends to infinity.

\medskip

Starting with the emergence of rainbow $K_k$-factors in uniformly coloured bijumbled graphs, note that such configurations span 
$$
 \frac nk\binom{k}{2}=\frac{k-1}{2}n
$$
edges. Our first result, stated below, essentially asserts that given an arbitrarily small yet fixed $\eps > 0$, a $\left(\tfrac{k-1}{2}+\eps\right)n$-uniform colouring of a bijumbled graph satisfying the premise of the result of Morris~\cite{Morris2025}, namely Theorem~\ref{thm:morris}, a.a.s.\  yields a rainbow $K_k$-factor. That is, an arbitrarily small linear surplus of colours, exhibited by the palette, ensures the emergence of rainbow $K_k$-factors in bijumbled graphs along uniform colourings essentially as soon as such structures (are conjectured to) first appear in bijumbled graphs.

\begin{theorem}[Rainbow clique factors]
\label{thm:rainbow-factors}
For every integer $k \geq 2$ and constants $\alpha, \eps > 0$ there exists a constant $c := c(k, \alpha, \eps) > 0$ such that the following holds. Let $Q \geq \left(\frac{k-1}{2} + \eps\right)n$ be an integer and let $G$ be an $n$-vertex $(p,\beta)$-bijumbled graph satisfying 
$$
 \delta(G)\geq \alpha p n
 \qquad \textrm {and} \qquad
 \beta\leq c p^{k-1} n,
$$
as well as $pn = \omega(\log n)$ if $k=2$. Then, a.a.s.\ \ a $Q$-uniform colouring of $G$ yields a rainbow $K_k$-factor provided that $k \mid n$. 
\end{theorem}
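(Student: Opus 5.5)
Since \cref{thm:rainbow-factors} is to be derived from the (forthcoming) palette-transference principle, I would first prove that principle for this setting via a McDiarmid-type coupling in the spirit of Ferber and Krivelevich~\cite{FerberKrivelevich2016}, and then feed in \cref{thm:morris} (for $k\ge3$) or a perfect-matching result (for $k=2$). Write $h:=\frac{k-1}{2}n$ and $Q\ge h+\eps n$. The plan is to reveal the colouring in a random order of the edges of $G$ and to argue that, a.a.s., we can extract a random spanning subgraph $H\subseteq G$ in which every edge is \emph{fresh-coloured}. By this I mean that whenever a candidate configuration needs the edge, its colour is still unused. The configuration found in $H$ will then automatically be rainbow.

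Concretely, the simplest route is random sparsification plus colour classes. Pick a subset $\cK'\subseteq\cK$ and keep only edges whose colour lies in $\cK'$, and, within each colour class, keep at most one edge. Each edge survives with probability roughly $q$, bounded away from $0$ by a function of $\eps$, but survival events are dependent. The McDiarmid coupling handles this dependence. We order the edges of the target $K_k$-factor's host and process them one at a time. An edge whose colour clashes with an earlier-used colour is replaced by an independently sampled edge of $\mathbb G(G,q)$. Since at most $h$ colours are ever used and $Q-h\ge\eps n$, each step succeeds with probability at least $\eps n/Q\ge \eps/(k)$. This yields the statement that the probability that some $K_k$-factor in $G_q$ (the $q$-random subgraph) fails to correspond to a rainbow one is bounded by the probability that $G_q$ has no $K_k$-factor, with $q:=\eps/(2k)$ say. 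So the key reduction is:
\[
\Pr[\text{no rainbow }K_k\text{-factor}]\le \Pr[G_q\text{ has no }K_k\text{-factor}].
\]

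It thus remains to show that $G_q$ a.a.s.\ satisfies the hypotheses of \cref{thm:morris} with $p$ replaced by $qp$. The minimum degree is controlled by Chernoff and a union bound: $\deg_{G_q}(v)\ge \alpha qpn/2$ needs $pn=\omega(\log n)$. For $k\ge3$ this follows from $\beta\ge C\sqrt{pn}$ and $\beta\le cp^{k-1}n$, which force $p\ge n^{-1/(2k-3)}$ up to constants. For bijumbledness, I would show that $G_q$ is $(qp,\beta')$-bijumbled with $\beta'=O(\beta+\sqrt{pn})=O(\beta)$. The deviation $e_{G_q}(X,Y)-q\,e_G(X,Y)$ is handled by Chernoff for large sets, say $|X||Y|p\ge n\log n$ scale, with a union bound over $\binom{n}{|X|}\binom{n}{|Y|}$ choices. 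For small sets I would use the standard degree-based argument for random graphs, bounding $e(X,Y)$ by the sparse-set estimate of the random subgraph.

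Then $\beta'\le O(c)\,p^{k-1}n=O(c\,q^{-(k-1)})(qp)^{k-1}n$, so choosing $c$ small relative to $\eta(k,\alpha q/2)q^{k-1}$ makes \cref{thm:morris} applicable. For $k=2$ there is no prior theorem stated, so I would instead obtain a perfect matching in $G_q$ directly via Hall's condition from bijumbledness and minimum degree $\Omega(pn)$, using $\beta\le cpn$. The main obstacle I expect is the small-set regime of the bijumbledness transfer, where Chernoff is too weak for a union bound. I would first try splitting at $|X|\le|Y|$ with $|Y|\le \beta/p$, and bounding the edge count there trivially via $\evec_G(X,Y)\le p|X||Y|+\beta\sqrt{|X||Y|}\le 2\beta\sqrt{|X||Y|}$, which is already acceptable. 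The second obstacle is making the coupling rigorous for a configuration not fixed in advance; the plan there is to couple the whole coloured graph with $G_q$ edge-by-edge, as Ferber--Krivelevich do, rather than per configuration.
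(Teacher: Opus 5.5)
Your outer architecture matches the paper's. The McDiarmid/Ferber--Krivelevich coupling (Lemma~\ref{lem:coupling}) reduces the problem to showing that a constant-rate percolation of $G$ a.a.s.\ satisfies the hypotheses of Theorem~\ref{thm:morris}, or of a Hall-type criterion when $k=2$.

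The genuine difference is how you control bijumbledness after percolation. The paper first passes to a spanning subgraph $G^\circ$ with all degrees $\Theta(pn)$, via a max-flow argument that costs $\beta\mapsto5\beta$. It then bounds the percolation noise with the Bandeira--van Handel operator-norm estimate. The result is a degraded parameter $5\rho\beta+O(\sqrt{\rho pn}+\sqrt{\log n})$, with leading term scaling with $\rho$ and polynomially small failure probability, packaged as a black-box-independent transference theorem.

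You instead percolate $G$ itself and check the cut condition directly by Chernoff and union bounds. This is more elementary and, once repaired as below, it works. It gives that $G_q$ is $(qp,O_\alpha(\beta))$-bijumbled with failure probability $e^{-\Omega(n)}$. The constant-factor loss relative to $\rho\beta$ is harmless, because $c$ is chosen after $q$. This is consistent with the paper: the vertex-neighbourhood test behind Observation~\ref{obs:density-floor} also gives $\Delta(G)\le\beta^2/(1-p)^2$. So for $p\le1/2$, the $\sqrt{\rho\Delta}$ term that the paper regularises away is already $O(\beta)$.

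\textbf{The main repair: your regimes do not close.} Apply the trivial host bound whenever $\sqrt{|X||Y|}\le\beta/p$, not only when $|X|\le|Y|\le\beta/p$. In that whole range $p|X||Y|\le\beta\sqrt{|X||Y|}$, so deterministically
\[
 \vec e_{G_q}(X,Y)\le\vec e_G(X,Y)\le2\beta\sqrt{|X||Y|}
 \qquad\text{and}\qquad
 qp|X||Y|\le\beta\sqrt{|X||Y|}.
\]
Without this, take $\beta=\Theta(\sqrt{pn})$ and $p=o(1)$. Pairs with $|X|=1$ and $|Y|=\Theta(n)$ then lie in neither of your regimes. A union bound over the $\binom{n}{|Y|}$ choices of $Y$ fails there: the Chernoff exponent is $\Theta(\beta\sqrt n)=o(n)$ against entropy $\Theta(n)$. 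A sparse-set estimate for $G_q$ also would not give order $\beta\sqrt{|X||Y|}$.

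In the complementary range $\sqrt{|X||Y|}>\beta/p$ you have $\operatorname{Var}\,\vec e_{G_q}(X,Y)\le2q\,\vec e_G(X,Y)\le4qp|X||Y|$. Bernstein with deviation $t=C'\beta\sqrt{|X||Y|}$ then has exponent
\[
 \gtrsim\min\{C'^2\beta^2/(qp),\,C'\beta^2/p\}\gtrsim C'\beta^2/p .
\]
This is $\Omega(C'\alpha n)$ by Observation~\ref{obs:density-floor}, or by the standing assumption $\beta\ge C\sqrt{pn}$. It beats a union bound over all $4^n$ pairs once $C'=C'(\alpha)$ is large. So neither the $n\log n$ threshold nor any intermediate regime is needed.

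\textbf{Two smaller fixes.}
\begin{itemize}
\item The ``one edge per colour class'' sparsification is wrong. For $Q=\Theta(n)$ it keeps only $O(n)$ edges, so an edge survives with probability $O(1/(pn))$, not a constant. Discard it and use only the edge-by-edge hybrid coupling. Its rate is $\rho=(Q-h+1)/Q\ge2\eps/(k-1+2\eps)$; your bound $\eps n/Q$ degenerates when $Q\gg n$. Then pass to a fixed $q\le\rho$ by monotonicity.
\item For $k=2$, Hall's condition requires a bipartite host. First take an equipartition in which every vertex keeps a third of its degree (Lemma~\ref{lem:balanced-cut}, which needs $pn=\omega(\log n)$). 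The resulting bipartite graph inherits bijumbledness and has minimum degree $\Omega(qpn)$, and Hall then goes through as you intend.
\end{itemize}
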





Our next result pertains to rainbow tree-universality with respect to  bounded-degree spanning trees. 

\begin{theorem}[Rainbow prescribed spanning trees]
\label{thm:rainbow-trees}
For every integer $\Delta \geq 2$ and constants 
$\alpha, \eps > 0$ there exists a constant 
$c := c(\alpha,\eps,\Delta)>0$ such that the following holds. Let
$G$ be an $n$-vertex $(p,\beta)$-bijumbled graph satisfying
$$
 \delta(G)\geq\alpha pn
 \qquad \textrm{and} \qquad
 \beta\leq
 c\,\frac{pn}{4\Delta^{5\sqrt{\log n}}}.
$$
If $Q \geq (1+\eps)n$, then for a $Q$-uniform colouring of $G$ it holds that 
$$
\sup_{T\in\mathcal T(n,\Delta)}
\Pr\!\left[G \textrm{ contains no rainbow copy of } T \right] = o(1).
$$
\end{theorem}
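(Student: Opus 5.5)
Fix $T\in\cT(n,\Delta)$; it has $n-1$ edges. The plan is a McDiarmid-type coupling in the spirit of Ferber--Krivelevich~\cite{FerberKrivelevich2016}, adapted to a deterministic host. The goal is to reduce the rainbow statement to an uncoloured embedding of $T$ into a random \emph{colour-representative} subgraph of $G$, to which Theorem~\ref{thm:han-yang} applies.

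\textbf{Step 1: a random sparse subgraph.} Choose a sampling probability $q$ with $q\ll 1$ fixed but small in terms of $\eps$, and apply it to each colour. Concretely, consider the colour classes $G_\kappa$, $\kappa\in\cK$. Expose the colouring and keep a random subgraph $G'\subseteq G$ in which every edge is retained independently with probability $q$. By Chernoff bounds and $pn\ge$ (implicitly) $\omega(\log n)$, which follows from $\beta\ge C\sqrt{pn}$ together with $\beta\le c\,pn/(4\Delta^{5\sqrt{\log n}})$, the graph $G'$ is a.a.s.\ $(qp,\beta')$-bijumbled with $\beta'\lesssim\sqrt{q}\,\beta+O(\sqrt{pn\log n})$. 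It also satisfies $\delta(G')\ge \alpha qpn/2$.

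\textbf{Step 2: rainbow embedding via the coupling.} Here I would take a slightly different and cleaner route, and embed greedily in stages rather than appeal to a fully general coupling. Order $V(T)$ in BFS order, so that $T$ is built by successively attaching leaves. Couple the $Q$-uniform colouring with a process in which each newly required edge is revealed along with a fresh colour. McDiarmid's argument states that, for a monotone property, replacing edge-by-edge independent colours with the rule ``an edge is useful iff its colour is unused so far'' only decreases the probability of the property. In the Ferber--Krivelevich formulation, an $h$-edge containment result at density $p_0$ transfers to $Q$ colours at density $Qp_0/(Q-h+1)$. With $h=n-1$ and $Q\ge(1+\eps)n$ the blow-up factor is at most $(1+\eps)/\eps$, a constant.

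To apply this with a deterministic host, I would realise $G$'s ``density'' through the sampled subgraph $G'$. Each edge of $G$ is independently declared present with probability $q_0$, and the claim is that $G'\sim G_{q_0}$ contains $T$ with probability $1-o(1)$, uniformly over $T$. The coupling then yields a rainbow copy of $T$ in $G$ whenever $q_0(1+\eps)/\eps\le1$. It therefore suffices to take $q_0=\eps/(1+\eps)$.

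\textbf{Step 3: verifying Theorem~\ref{thm:han-yang} for $G_{q_0}$.} By Step 1, with $q=q_0$ the graph is a.a.s.\ $(q_0p,\beta')$-bijumbled. Two conditions must be checked. The first is $\beta'\le q_0pn/(4\Delta^{5\sqrt{\log n}})$, which holds once $c\le q_0^{1/2}/2$, since the additive $\sqrt{pn\log n}$ term is dominated by $\beta\ge C\sqrt{pn}$ only up to a $\sqrt{\log n}$ factor. The second is the minimum-degree condition $\alpha q_0pn/2\ge 4\sqrt{\beta' q_0pn}$, which reduces to $\beta'\le \alpha^2q_0pn/64$ and is implied by the bound on $\beta'$ for large $n$.

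\textbf{Main obstacle.} The genuine difficulty is the bijumbledness of the random subgraph: uniformly over all pairs $X,Y$, the deviation must be bounded by $\sqrt{q_0}\beta+O(\sqrt{|X||Y|\,pn})$-type terms without losing a $\sqrt{\log n}$ that could exceed the slack. The first thing I would try is to handle large sets by Chernoff with a union bound over $4^n$ pairs, which gives an error $O(\sqrt{p|X||Y|n})\le O(\sqrt{pn}\sqrt{|X||Y|})$. Small sets would be handled deterministically, since $e_{G'}\le e_G$, absorbing the $\sqrt{pn}$ error into $\beta$ through $\beta\ge C\sqrt{pn}$ and the factor $\Delta^{5\sqrt{\log n}}$ slack. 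Uniformity of the $o(1)$ over $T$ is automatic, because the a.a.s.\ event concerns only $G_{q_0}$, and Theorem~\ref{thm:han-yang} then yields universality.
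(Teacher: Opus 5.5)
Two of your three ingredients match the paper. The coupling you invoke is exactly Lemma~\ref{lem:coupling}: the edge-by-edge hybrid over $E(G)$ never uses randomness of the host, and monotonicity lets you work with $q_0=\eps/(1+\eps)\le (Q-n+2)/Q$. The black box is Theorem~\ref{thm:han-yang}, and uniformity in $T$ is handled the same way. Drop the BFS/greedy narrative in Step~2, though: McDiarmid's coupling swaps one \emph{host} edge at a time and has nothing to do with an ordering of $V(T)$.

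The genuine difference is how you control the bijumbledness of the percolated graph. The paper first regularises degrees (Lemma~\ref{lem:regularisation}, via flows), because the Bandeira--van Handel bound behind Lemma~\ref{lem:percolation} carries a $\sqrt{\rho\Delta(F)}$ term. It then gets $5\rho\beta+C_{K_0}(\sqrt{\rho c_2^\circ pn}+\sqrt{\log n})$ for all pairs at once. Your last paragraph instead bounds pairs with small $\sqrt{|X||Y|}$ trivially, and the rest by Bernstein plus a union bound over at most $4^n$ pairs. This needs neither maximum-degree control (the mean $q_0\evec_G(X,Y)$ is already controlled by bijumbledness) nor random-matrix theory. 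The price is that small pairs cost the full $\beta$ rather than $q_0\beta$, and the target must exceed $\sqrt{pn/q_0}$ by a large factor. Both are harmless at constant retention probability, whereas the paper's estimate is a single reusable statement valid for all $\rho$.

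As written, however, Steps~1 and~3 do not close; the last paragraph has to carry the proof. The hypotheses allow $pn$ as small as order $L_n^2/c^2$, where $L_n=\Delta^{5\sqrt{\log n}}$; this is exactly the floor coming from Observation~\ref{obs:density-floor}. At that density an additive $\sqrt{pn\log n}$ exceeds the target $q_0pn/(4L_n)$ by a factor of order $\sqrt{\log n}$. So ``holds once $c\le q_0^{1/2}/2$'' is false. Moreover, the trivial bound $\evec_{G'}\le\evec_G$ costs $\beta+p\sqrt{|X||Y|}$, not $\sqrt{q_0}\,\beta$, so you need $c<q_0$.

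A version that works is to split at $\sqrt{|X||Y|}=s:=q_0n/(8L_n)$.
\begin{itemize}
\item Below $s$, the deviation is at most $(\beta+ps)\sqrt{|X||Y|}\le\frac{q_0pn}{4L_n}\sqrt{|X||Y|}$, provided $c\le q_0/2$.
\item Above $s$, you have $\mathbb E\,\evec_{G'}(X,Y)\le 2q_0p|X||Y|$. Bernstein with deviation $O(\sqrt{q_0pn|X||Y|}+n)$ then survives the union bound.
\item The $n$ term is $O(\sqrt{q_0pn})\sqrt{|X||Y|}$ only when $|X||Y|\gtrsim n/(q_0p)$. So the two regimes must overlap, which amounts to $pn\gtrsim L_n^2/q_0^3$.
\end{itemize}
For $p>1/2$ the overlap condition is automatic. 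For $p\le 1/2$ it follows from Observation~\ref{obs:density-floor} once $c\ll\sqrt{\alpha}\,q_0^{3/2}$. This is where $c$ must depend on $\alpha$; the tacit $\beta\ge C\sqrt{pn}$ you use is not a hypothesis of the theorem. With these choices, and $\delta(G')\ge\alpha q_0pn/2$ by Chernoff, your route does prove the theorem.
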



Our last rainbow embedding result targets Hamiltonicity and reads as follows. 

\begin{theorem}[Rainbow Hamilton cycles]
\label{thm:rainbow-hamilton}
For every $\alpha, \eps > 0$ there exist constants
$c \coloneqq c(\alpha,\eps)>0$ and $K \coloneqq K(\alpha,\eps)>0$ such that the
following holds.  Let $G$ be an $n$-vertex
$(p,\beta)$-bijumbled graph satisfying
$$
 \delta(G)\geq\alpha pn,
 \qquad
 \beta\leq cpn,
 \qquad \textrm{and} \qquad
 pn \geq K\log n.
$$
If $Q\geq(1+\eps)n$, then a $Q$-uniform colouring of $G$
a.a.s.\ \ yields a rainbow Hamilton cycle.
\end{theorem}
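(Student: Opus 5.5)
The plan is to derive Theorem~\ref{thm:rainbow-hamilton} from a McDiarmid-type palette-transference argument in the spirit of Ferber and Krivelevich~\cite{FerberKrivelevich2016}, with Theorem~\ref{thm:ham-expander} as the uncoloured input. The idea is to find a random subgraph $H\subseteq G$ that is, a.a.s., simultaneously a $C$-expander for the constant $C$ of Theorem~\ref{thm:ham-expander} and \emph{rainbow-embeddable} with respect to the colouring. To that end, set $q:=\eps/(2(1+\eps))$ and consider the following coupled process. Expose the colours of the edges of $G$ one at a time. Keep an edge in $H$ if its colour is ``fresh'' for a greedily built matching between edges and colours, in the sense of the coupling argument; otherwise discard it.

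Carried out precisely, the Ferber--Krivelevich coupling yields the following transfer statement. Let $\mathcal P$ be a monotone increasing property (here, containing a Hamilton cycle), let $G_q$ denote the random subgraph of $G$ retaining each edge independently with probability $q'$, and let $h=n$ be the number of edges of the target. If $\Pr[G_{q'}\in\mathcal P]=1-o(1)$, where $q'\approx (Q-h)/Q\ge \eps/(1+\eps)$, then the $Q$-uniform colouring of $G$ a.a.s.\ yields a rainbow member of $\mathcal P$. I would re-prove this in the bijumbled setting by rerunning McDiarmid's edge-by-edge switching: compare, one edge of $G$ at a time, the model ``edge present with probability $q'$'' against ``edge present if its colour has not been used by the current rainbow structure''. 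At each step at least $Q-h\geq \eps n$ colours remain unused, so the switching probability is at least $q'$.

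The substantive step is therefore to show that $G_{q'}$, a random subgraph of $G$ with constant retention probability $q'=q'(\eps)$, is a.a.s.\ a $C$-expander. For condition (b) of Definition~\ref{def:C-expander}, take disjoint $X,Y$ with $|X|,|Y|\ge n/(2C)$. Bijumbledness gives
\[
e_G(X,Y)\ge p|X||Y|-\beta\sqrt{|X||Y|}\ge \tfrac12 p|X||Y|,
\]
once $c$ is small compared with $1/C$. The probability that none of these edges survives is then at most $\exp(-q' p n^2/(8C^2))$, which beats the $4^n$ choices of $(X,Y)$ as soon as $pn\ge K$. For condition (a), I would split by the size of $X$:
\begin{itemize}
\item For $|X|\le \gamma n$ with a small $\gamma$, Chernoff bounds and $pn\ge K\log n$ show that every vertex keeps degree at least $q'\alpha pn/2$ in $G_{q'}$. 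A bijumbledness count then gives $e_G(X)\le p|X|^2+\beta|X|$, so $X$ cannot absorb most of the degrees of its vertices and must have $C|X|$ external neighbours. Concretely, suppose $|\Gamma(X)|<C|X|$. Then
\[
e(X,X\cup\Gamma(X))\le p|X|(C+1)|X|+\beta(C+1)|X|,
\]
which is at most $q'\alpha p n|X|/4$ once $\gamma$ and $c$ are small relative to $\alpha q'/C$. Applied in $G$ this is deterministic; the random subgraph has fewer edges, so it suffices to combine it with the minimum degree bound.
\item For $\gamma n\le|X|<n/(2C)$, expansion follows from (b)-type reasoning. Suppose $Y=V\setminus(X\cup\Gamma(X))$ has size at least $n/2$. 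Then $G_{q'}$ has no $X$--$Y$ edge, which is unlikely by the same union bound, now with $|X|\ge\gamma n$.
\end{itemize}

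The hard step is making the coupling rigorous for the non-product, fixed host $G$. McDiarmid's argument must be run with respect to a fixed ordering of $E(G)$, and one has to check that the greedy rainbow process only ever needs $n$ colours, so that the retention probability never drops below $\eps/(1+\eps)$. If that direct route turns out delicate, I would fall back to the explicit Ferber--Krivelevich statement: the rainbow probability dominates the probability that $G_{(Q-n+1)/Q}$ is Hamiltonian. This is applied with the fixed host $G$ in place of $K_n$, where their proof uses only edge-independence.
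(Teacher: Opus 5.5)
Your proof is correct, but its middle step takes a genuinely different route from the paper's.

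The first step is shared. You bound the rainbow probability below by $\Pr[G_\rho\text{ is Hamiltonian}]$ with $\rho=(Q-n+1)/Q\ge\eps/(1+\eps)$ for the fixed host $G$. This is exactly Lemma~\ref{lem:coupling} with $S=\cK$, proved by the edge-by-edge hybrid argument of your fallback. Your ``greedy fresh-colour'' description is not itself a valid coupling. The worry you raise about it is settled by the pivotal-edge count: the other $h-1$ edges of the witness use $h-1$ colours, which leaves $Q-h+1$ good colours for the switched edge.

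The paper then goes through its general transference principle:
\begin{itemize}
\item it regularises $G$ to a spanning $G^\circ$ with all degrees $\Theta(pn)$, via a flow argument;
\item it shows via Bandeira--van Handel that $G^\circ_\rho$ is $(\rho p,\eta\rho pn)$-bijumbled;
\item it turns bijumbledness plus minimum degree into $C$-expansion deterministically (Lemma~\ref{lem:deterministic-expansion}).
\end{itemize}

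You instead verify $C$-expansion of $G_\rho$ directly. For the large-set conditions, a union bound over at most $4^n$ pairs beats failure probabilities $\exp(-\Omega(pn^2))$. Small-set expansion follows deterministically: the bound $\evec_G(X,X\cup\Gamma(X))\le p(C+1)|X|^2+\beta\sqrt{C+1}\,|X|$ passes to subgraphs, and you combine it with the Chernoff minimum-degree bound in $G_\rho$.

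Your route is more elementary. It needs no random-matrix input and no control of $\Delta(G)$, hence no regularisation, because you never need two-sided discrepancy control of the percolated graph. The paper's route buys generality: it preserves bijumbledness itself. That is what the Morris and Han--Yang black boxes require for the clique-factor and tree applications, where the target property cannot be checked by such a simple union bound; Hamiltonicity is just one instance.

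Two cosmetic slips in your medium-size case:
\begin{itemize}
\item One has $|Y|>\frac{C-1}{2C}n\ge n/4$, not $|Y|\ge n/2$.
\item You need $c\le\sqrt{\gamma}/4$ or so there. This is harmless, since $\gamma$ depends only on $\alpha,\eps,C$.
\end{itemize}
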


\subsubsection{Density assumptions}\label{sec:density} Our results require 
a certain lower bound on the density of the host graph. In some results, namely the case $k=2$ of Theorem~\ref{thm:rainbow-factors}, and Theorem~\ref{thm:rainbow-hamilton}, this lower bound is explicit. In others, namely the case $k \geq 3$ of Theorem~\ref{thm:rainbow-factors}, and Theorem~\ref{thm:rainbow-trees}, it is implicit. This issue is clarified in this section. We begin with the following simple observation.




\begin{observation}[The intrinsic lower bound on bijumbledness]
\label{obs:density-floor}
Let $G$ be an $n$-vertex $(p,\beta)$-bijumbled graph satisfying
$\delta(G)\geq\alpha pn$.  Then, $\beta \geq (1-p) \sqrt{\alpha pn}$.
\end{observation}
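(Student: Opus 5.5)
Apply the bijumbledness inequality of Definition~\ref{def:jumbledness} to a single vertex and its neighbourhood. Fix any vertex $v \in V(G)$ and set $X = \{v\}$ and $Y = N_G(v)$, so that $|Y| = \deg_G(v) =: d \geq \alpha p n$. Every $y \in Y$ is adjacent to $v$, hence $\evec_G(X,Y) = d$. Bijumbledness then gives
$$
 d - p d = \bigl|\evec_G(X,Y) - p|X||Y|\bigr| \leq \beta \sqrt{d},
$$
that is, $(1-p)\sqrt{d} \leq \beta$.

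Since $d \geq \alpha p n$, it follows that $\beta \geq (1-p)\sqrt{d} \geq (1-p)\sqrt{\alpha p n}$, which is the claim. The sign inside the absolute value causes no trouble: $d - pd \geq 0$ because $p \leq 1$, so the absolute value equals $d(1-p)$. If $p = 1$, the bound reads $\beta \geq 0$ and holds trivially.

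There is no real obstacle. The only point to check is that the ordered count $\evec_G(\{v\},N_G(v))$ is exactly $\deg_G(v)$. This holds because $v \notin N_G(v)$ (graphs are simple), so $X$ and $Y$ are disjoint and no edge is double counted.
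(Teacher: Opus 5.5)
Your proof is correct and is essentially the paper's argument: apply bijumbledness to $\{v\}$ and its neighbourhood to get $(1-p)\deg_G(v)\leq\beta\sqrt{\deg_G(v)}$, divide by $\sqrt{\deg_G(v)}$, and then use the minimum-degree bound. The paper explicitly notes that $\deg_G(v)>0$ before dividing, which you skip. This is harmless, since $(1-p)\sqrt{d}\leq\beta$ holds trivially when $d=0$.
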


\begin{proof}
For any $v \in V(G)$, we may write 
$$
 \evec_G(\{v\},\Gamma_G(v)) = |\Gamma_G(v)| = \deg_G(v)
 \qquad \textrm{and} \qquad
 p|\{v\}||\Gamma_G(v)| = p \deg_G(v).
$$
Bijumbledness then implies 
$$
 (1-p)\deg_G(v)
 \leq
 \beta\sqrt{\deg_G(v)}.
$$
The minimum degree condition ensures that $\deg_G(v)>0$ holds. Dividing by its square root and then relying on the assumed lower bound $\deg_G(v) \geq \alpha pn$ completes the proof.
\end{proof}

For $k \geq 3$, combining Observation~\ref{obs:density-floor} with the condition $\beta \leq cp^{k-1}n$ imposed in Theorem~\ref{thm:rainbow-factors} shows that $p > 1/2$, or
$$
 pn\geq
 \left(\frac{\sqrt\alpha}{2c}\right)^{2/(2k-3)}
 n^{(2k-4)/(2k-3)}.
$$
In either case, $pn = \omega(\log n)$ holds. Proceeding to spanning trees, set $L_n = \Delta^{5\sqrt{\log n}}$ and note that Observation~\ref{obs:density-floor} and the pseudorandomness assumption imposed in Theorem~\ref{thm:rainbow-trees} collectively assert that if $p \leq 1/2$, then 
$$
 pn \geq \frac{4\alpha L_n^2}{c^2} = \omega(\log n).
$$
holds.

\medskip

By contrast, a linear upper bound $\beta \leq cpn$, as imposed in our prefect matching and Hamiltonicity results, only forces
$pn = \Omega(1)$. All of our results, however, rely on constant-rate percolation of the host graph, as part of the palette transference principle, namely Theorem~\ref{thm:transfer}. Carrying out such a percolation on a graph having many vertices of degree less than $c \log n$, for an appropriate constant $c$, leads to isolated vertices being formed a.a.s.\  post-percolation, rendering the resulting spanning configuration unrealisable.






\subsection{Palette transference principle: our main result} 

Let $V$ be a fixed $n$-element set.  An \emph{$h$-edge
configuration family} on $V$ is a family
$$
 \cA\subseteq\binom{\binom V2}{h}.
$$
Examples of such configuration families include, for instance, all perfect matchings of the complete graph with vertex-set $V$ (i.e., $h = |V|/2$), all Hamilton cycles in the complete graph with vertex-set $V$ (i.e., $h = |V|$), and so on. 

A graph $J$ is said to {\em support} a configuration family $\mathcal A$ provided that it contains a member of $\mathcal A$ as a subgraph. In a coloured graph, such an occurrence is termed \emph{$S$-rainbow} if its colours are pairwise
distinct and all belong to a (sub)palette $S$.  

\medskip
Our main result, from which Theorems~\ref{thm:rainbow-factors},~\ref{thm:rainbow-trees}, and~\ref{thm:rainbow-hamilton} are all derived, reads as follows.

\begin{theorem}[Palette-transference principle]
\label{thm:transfer}
For all positive constants $\alpha, a$, and  $K_0$, there exist constants
$$
 c_1^\circ \coloneqq c_1^\circ(\alpha) > 0,
 \qquad
 c_2^\circ \coloneqq c_2^\circ(\alpha,a) > 0,
 \qquad
 d_\star \coloneqq d_\star(\alpha) > 0,
 \qquad \textrm{and} \qquad
 C_{K_0} > 0
$$
with the following property. Given $n \geq 2$, set $d = pn$, and
suppose that $d\geq d_\star$ holds. Let $G = (V,E)$ be an $n$-vertex
$(p,\beta)$-bijumbled graph satisfying
$$
 \delta(G)\geq\alpha d
 \qquad \textrm{and} \qquad
 \beta\leq ad.
$$
Let $\cA$ be an $h$-edge configuration family on $V$, where
$2 \leq h \leq Q$. For a $(Q, \cK)$-uniform colouring of $G$, let $S \subseteq \cK$ have size $\ell \geq h$, and set
$$
 \rho = \frac{\ell-h+1}{Q}
 \qquad \textrm{and} \qquad
 \widehat\beta =
 5\rho\beta
 + C_{K_0}\left(
    \sqrt{\rho c_2^\circ d}+\sqrt{\log n}
  \right).
$$
Suppose, further, that a number $b_\star\geq0$ satisfies
\begin{equation}
 \widehat\beta\leq b_\star,
 \label{eq:transfer-interface}
\end{equation}
and that every $n$-vertex $(\rho p,b_\star)$-bijumbled graph $J$ with vertex-set $V$ satisfying 
\begin{equation}
 \delta(J)\geq\frac{c_1^\circ}{2}\rho d
 \label{eq:transfer-deterministic-degree}
\end{equation}
supports $\cA$.  Then,
\begin{align}
 \Pr\!\left[
  G\text{ contains an $S$-rainbow member of $\cA$}
 \right]
 \geq
 1-(2n)^{-K_0}
 -n\cdot\exp\left(-\frac{c_1^\circ\rho d}{8}\right).
 \label{eq:transfer-conclusion}
\end{align}
\end{theorem}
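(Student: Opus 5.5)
We follow the McDiarmid-type coupling approach of Ferber and Krivelevich: the uniform colouring of $G$ is revealed one colour at a time, in a way that exposes a $\rho$-percolation of $G$ inside which every configuration is automatically $S$-rainbow. Write $S=\{s_1,\dots,s_\ell\}$ and expose the colouring greedily. We maintain a partial structure, and at each step a fresh edge coloured by an as-yet-unused colour of $S$ is "accepted". Concretely, we use the standard reformulation: reveal $G_S$, the subgraph of edges whose colour lies in $S$, and within it a subgraph $J\subseteq G_S$. The graph $J$ is obtained by keeping each edge independently with probability $\rho=(\ell-h+1)/Q$.

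The coupling runs as follows. Process the edges of $G$ in a fixed order. Each edge independently lands in $J$ with probability $\rho$. Then, conditioned on the event that $J$ contains some member $A$ of $\mathcal A$, we assign colours so that $A$ is $S$-rainbow. The justification is the Ferber--Krivelevich lemma: an $h$-edge set $A$ can be coloured injectively into $S$ along the $h$ steps. At the $i$th step at least $\ell-i+1\ge \ell-h+1$ colours of $S$ remain unused, so each edge receives an unused colour of $S$ with probability at least $\rho$. A monotone-coupling (sprinkling) argument then gives
\[
\Pr[G\text{ has an $S$-rainbow member of }\mathcal A]\ \ge\ \Pr[G_\rho \text{ supports } \mathcal A],
\]
where $G_\rho$ is the $\rho$-random subgraph of $G$. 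We will quote or reprove the statement in exactly this form, namely that the rainbow property dominates $\rho$-percolation containment.

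It therefore suffices to show that, with the claimed probability, $J=G_\rho$ is $(\rho p,b_\star)$-bijumbled and satisfies $\delta(J)\ge \tfrac{c_1^\circ}{2}\rho d$; the hypothesis then gives that $J$ supports $\mathcal A$.

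\textbf{Minimum degree.} Each $\deg_J(v)\sim\mathrm{Bin}(\deg_G(v),\rho)$ has mean at least $\alpha\rho d$. We choose $c_1^\circ\le\alpha$, perhaps reduced slightly for the lower-tail constants, so that a Chernoff bound gives $\Pr[\deg_J(v)<\tfrac{c_1^\circ}{2}\rho d]\le \exp(-c_1^\circ\rho d/8)$. A union bound over $v$ produces the term $n\exp(-c_1^\circ\rho d/8)$.

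\textbf{Bijumbledness.} This is the main obstacle. For all pairs $X,Y$ we need
\[
|\vec e_J(X,Y)-\rho p|X||Y||\le \widehat\beta\sqrt{|X||Y|}.
\]
Write
\[
\vec e_J(X,Y)-\rho p|X||Y| = \bigl(\vec e_J-\rho\,\vec e_G\bigr)+\rho\bigl(\vec e_G-p|X||Y|\bigr).
\]
The second term is at most $\rho\beta\sqrt{|X||Y|}$, which is where the $5\rho\beta$ comes from, with room to spare. The first term is a sum of independent centred variables whose variance is about $\rho\,\vec e_G(X,Y)\le\rho(p|X||Y|+\beta\sqrt{|X||Y|})$. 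A naive union bound over all $2^{2n}$ pairs fails for small sets, so we split into cases.

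For large sets, $|X||Y|\ge n\log n/d$ roughly, Bernstein's inequality with a union bound over sets of sizes $x,y$ costs $\exp(O((x+y)\log(n/\min)))$. This is absorbed by a deviation of order $\sqrt{\rho d}\sqrt{xy}$ (up to $\sqrt{c_2^\circ}$, where $c_2^\circ$ depends on $a$ through $\beta\le ad$) or by $\sqrt{\log n}\sqrt{xy}$.

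For small sets, we instead use degree and codegree-type bounds. The key tool is to control $\vec e_J(X,Y)$ via $\vec e_J(X,Y)\le\sum_{x\in X}\deg_J(x,Y)$ together with a dyadic decomposition, in the style of the standard proof that $G(n,p)$ is $(p,O(\sqrt{pn}))$-bijumbled. The probability is tuned to $(2n)^{-K_0}$ by choosing $C_{K_0}$ large; $d_\star(\alpha)$ ensures the Chernoff regimes are valid.

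Collecting the three error sources gives the bound $\widehat\beta\le b_\star$, which establishes \eqref{eq:transfer-conclusion}.
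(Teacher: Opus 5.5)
\textbf{Main gap: you percolate $G$ rather than a degree-bounded subgraph.} You claim that $G_\rho$ is $(\rho p,\widehat\beta)$-bijumbled with the stated probability. This is false under the hypotheses, because nothing bounds $\Delta(G)$ by $O(d)$. Meanwhile the argument of Observation~\ref{obs:density-floor} (with $X=\{v\}$, $Y=\Gamma(v)$) shows that a $(q,b)$-bijumbled graph needs $b\geq(1-q)\sqrt{\deg(v)}$ for every vertex $v$.

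Here is a concrete counterexample. Take a $d$-regular $(d/n,O(\sqrt d))$-bijumbled graph with $d=n^{1/3}$, and join one vertex $v$ to $D=\lceil d^{3/2}\rceil$ non-neighbours.
\begin{itemize}
\item The new edges change any $\evec(X,Y)$ by at most $2\sqrt D\sqrt{|X||Y|}$. So the hypotheses hold with $\delta(G)\geq d$ and $\beta=O(\sqrt d)+2\sqrt D\leq ad$.
\item A.a.s.\ $\deg_{G_\rho}(v)=(1+o(1))\rho D$. Hence $G_\rho$ is not $(\rho p,b)$-bijumbled for any $b<(1-o(1))\sqrt\rho\,d^{3/4}$.
\item On the other hand, $\widehat\beta=(10\rho+o(1))d^{3/4}$.
\end{itemize}
So for fixed $\rho<1/100$ and the admissible choice $b_\star=\widehat\beta$, the black-box hypothesis says nothing about $G_\rho$. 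Since $c_2^\circ$ and $C_{K_0}$ must be constants while $\Delta(G)/d$ is unbounded, no choice of constants repairs this.

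\textbf{The missing idea is degree regularisation before coupling} (Lemma~\ref{lem:regularisation}). The paper first passes to a spanning $G^\circ\subseteq G$ that is $(p,5\beta)$-bijumbled with $c_1^\circ d\leq\delta(G^\circ)\leq\Delta(G^\circ)\leq c_2^\circ d$.
\begin{itemize}
\item It keeps $G[T]$, where $T$ is the set of vertices of degree at most $2d$.
\item It adds an integral-flow subgraph whose degrees all lie in $[r,r+\lceil r/\kappa\rceil]$.
\item The set $V\setminus T$ has at most $\beta^2n/d^2$ vertices, so every $H$ with $G[T]\subseteq H\subseteq G$ is $(p,5\beta)$-bijumbled.
\end{itemize}
This is the actual source of the factor $5$. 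It is also why $c_2^\circ$ depends on $a$: the cut condition in the flow needs $\kappa+a\sqrt\kappa+\gamma\leq\alpha$. Lemma~\ref{lem:coupling} is then applied to the host $G^\circ$ with $\cA^\circ=\{A\in\cA:A\subseteq E(G^\circ)\}$, and it is $G^\circ$, not $G$, that gets percolated.

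\textbf{Secondary issue: your concentration scheme does not reach the scale of $\widehat\beta$,} even after regularisation. Write $x=|X|$, $y=|Y|$. For sublinear sets the hypotheses only give $\evec(X,Y)=O(d\min\{x,y\})$. With that variance bound, Bernstein plus a union bound over $\binom nx\binom ny$ pairs gives, for $x=y$, a deviation bound of order $x\sqrt{\rho d\log(en/x)}$. This exceeds $C_{K_0}(\sqrt{\rho c_2^\circ d}+\sqrt{\log n})\,x$ by an unbounded factor once $x=n^{1-\Omega(1)}$ and $\rho d\to\infty$. Your threshold $|X||Y|\gtrsim n\log n/d$ places exactly such sets in your ``large'' regime. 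The small-set Kahn--Szemer\'edi-type argument is only gestured at.

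The paper avoids all case analysis (Lemma~\ref{lem:percolation}).
\begin{itemize}
\item It writes $\evec_{F_\rho}(X,Y)-\rho\evec_F(X,Y)=\ind_X^{\mathsf T}M\ind_Y$.
\item It splits $M=R+R^{\mathsf T}$, with $R$ upper-triangular and having independent entries.
\item It applies the Bandeira--van Handel bound (Theorem~\ref{thm:bvh-percolation}). The row and column variance proxies are at most $\sqrt{\rho\Delta(F)}$, so $\|M\|_{\mathrm{op}}\leq C_{K_0}(\sqrt{\rho\Delta(F)}+\sqrt{\log n})$ with probability at least $1-(2n)^{-K_0}$.
\end{itemize}
This bound is useful only when $\Delta(F)=O(d)$, which is precisely why the regularisation step cannot be skipped.

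Your minimum-degree step and the coupling inequality you plan to quote (valid for any fixed host) are fine.
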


\begin{remark}
In applications of Theorem~\ref{thm:transfer}, the parameters $\rho$ and $\widehat \beta$ are referred to as its {\em retention probability} and its {\em degraded bijumbledness parameter}, respectively.     
\end{remark}

Throughout, we refer to the deterministic assumption imposed in Theorem~\ref{thm:transfer} asserting that $J$ (with certain properties) supports $\mathcal A$, as the {\em deterministic black-box assumption} of the palette-transference principle. 

The allowed sub-palette $S$, defined in Theorem~\ref{thm:transfer}, captures the surplus of colours available in the uniform colouring. Admittedly, in our proofs of Theorems~\ref{thm:rainbow-factors}, ~\ref{thm:rainbow-trees}, and~\ref{thm:rainbow-hamilton} this set always coincides with the entire palette, namely $\mathcal K$, per each theorem. Thus, for our needs in the present paper, the set $S$ can be suppressed in the formulation of Theorem~\ref{thm:transfer}. Nevertheless, we retain this set as the current formulation is conducive for absorption arguments which might be of use for future applications. 

The retention probability parameter $\rho$ stems from an implementation of {\sl McDiarmid's coupling method}~\cite{McDiarmid1981} in our proof of Theorem~\ref{thm:transfer}. This implementation leads us to consider percolations $G_\rho$ of the host graph $G$ in which each edge of the latter is retained with probability $\rho$, independently from the rest. An  application of Theorem~\ref{thm:transfer} with $S = \cK$ yields
$$
 \rho=\frac{Q-h+1}{Q}.
$$
If $h\leq\tau n$ and
$Q\geq h+\eps n$, then
\begin{equation}
 \rho
 \geq
 \frac{\eps}{\tau+\eps}.
 \label{eq:surplus-rate}
\end{equation}
Thus, a linear surplus of colours translates into a
constant-rate percolation in our proof.  Note crucially that the
right-hand side of~\eqref{eq:transfer-conclusion} tends to one whenever $\rho pn/\log n\to\infty$.

\subsection{Architecture of our proofs}\label{sec:architecture}

As stated above, Theorems~\ref{thm:rainbow-factors}, ~\ref{thm:rainbow-trees}, and~\ref{thm:rainbow-hamilton} are all derived from our so-called palette-transference principle announced in Theorem~\ref{thm:transfer}. In  this section we outline the proof methodology of Theorem~\ref{thm:transfer}. 

Three ingredients comprise our proof of Theorem~\ref{thm:transfer}. The first is called {\em palette coupling} and is captured in Lemma~\ref{lem:coupling} stated below. Roughly put, this lemma implements the coupling method associated with McDiarmid~\cite{McDiarmid1981}. It exploits the colour surplus, present in the palette of the uniform  colouring of a (not necessarily pseudorandom) host graph $F$, in order to lower bound the probability of emergence of a rainbow (spanning) configuration, using the probability that the same configuration emerges in a constant-rate percolation of $F$.  

As mentioned above, such coupling results were established in the past for truly random host graphs by Ferber and Krivelevich~\cite{FerberKrivelevich2016}, and later by Katsamaktsis, Letzter, and Sgueglia~\cite{KatsamaktsisLetzterSgueglia2025} for randomly perturbed host graphs. Adapting the argument of Ferber and Krivelevich~\cite{FerberKrivelevich2016}, our coupling lemma, namely Lemma~\ref{lem:coupling}, supports deterministic graph hosts. 


\medskip
While the first ingredient, namely the palette coupling lemma, is independent of pseudorandomness, the next two ingredients rely on it; it is these two ingredients that elevate the aforementioned coupling lemma to a palette-transference principle for pseudorandom graphs. For a pseudorandom host graph $F$, Lemma~\ref{lem:percolation} estimates the degradation in pseudorandomness (measured via the parameter $\beta$) incurred by constant rate percolation. This is done through a random matrix concentration argument powered by a recent result of Bandeira and van Handel~\cite{BandeiraVanHandel2016}, stated in Theorem~\ref{thm:bvh-percolation} below. As alluded to in the definition of $\widehat \beta$ in Theorem~\ref{thm:transfer}, this degradation estimation includes a term of the form $\sqrt{\rho \Delta(F)}$; if $\rho$ is fixed and $\Delta(F)$ is somewhat large, which may be the case for a pseudorandom host, this estimation is useless. 

Anticipating this obstacle for pseudorandom hosts, the proof of Theorem~\ref{thm:transfer} starts by passing to a sufficiently pseudorandom subgraph $F^\circ \subseteq F$ in which all degrees are of the order $\Theta(p n)$; this renders the aforementioned pseudorandomness degradation estimation useful. This constitutes the third ingredient of our proof, namely Lemma~\ref{lem:regularisation}. The palette coupling, i.e. Lemma~\ref{lem:coupling}, as well as the analysis of the pseudorandomness level of the percolated output, i.e. Lemma~\ref{lem:percolation}, are both performed over $F^\circ$ and $F^\circ_\rho$, respectively, rather than over the original host.

\subsection{Organisation} Section~\ref{sec:palette-coupling} is dedicated to our palette coupling lemma, namely Lemma~\ref{lem:coupling}. In Section~\ref{sec:transfer-proof} we prove our palette-transference principle, namely Theorem~\ref{thm:transfer}. We then proceed to prove our applications: Theorem~\ref{thm:rainbow-factors} in Section~\ref{sec:rainbow-factors}, Theorem~\ref{thm:rainbow-trees} in Section~\ref{sec:rainbow-trees}, and Theorem~\ref{thm:rainbow-hamilton} in Section~\ref{sec:rainbow-hamilton}. Finally, we consider some directions for future research in Section~\ref{sec:concluding}.

\section{Palette coupling}\label{sec:palette-coupling}

The main result of this section reads as follows. 

\begin{lemma}[Palette coupling]
\label{lem:coupling}
Let $F$ be a prescribed graph and let
$\cF\subseteq\binom{E(F)}h$ be a family of $h$-edge sets. For a $(Q, \cK)$-uniform colouring of $F$, let $S \subseteq \cK$ have size $\ell \geq h$, and set
\begin{equation}
 \rho = \frac{\ell-h+1}{Q}.
 \label{eq:coupling-rate}
\end{equation}
Then,
\begin{align}
\Pr\!\left[
  \text{some $A\in\cF$ is rainbow with all colours in $S$}
 \right]
 \geq
 \Pr\!\left[F_\rho\text{ contains some $A\in\cF$}\right].
 \label{eq:coupling-conclusion}
\end{align}
\end{lemma}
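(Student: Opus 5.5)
We will prove the statement by McDiarmid's coupling: reveal the edge colours one at a time, and compare, edge by edge, a random subgraph recording the ``useful'' edges with an independent $\rho$-percolation. Fix an ordering $e_1,\dots,e_m$ of $E(F)$. For $0\le j\le m$ define a hybrid random graph $F^{(j)}$ on $E(F)$: for $i\le j$ the edge $e_i$ is included according to a greedy colour rule, and for $i>j$ it is included independently with probability $\rho$. Write $P_j$ for the probability that $F^{(j)}$ contains some $A\in\cF$ that is good, meaning every edge of $A$ coming from the colour part carries a colour in $S$ and these colours are pairwise distinct. Then $P_0=\Pr[F_\rho\supseteq A \text{ for some } A\in\cF]$ and $P_m$ is at most the left-hand side of \eqref{eq:coupling-conclusion}. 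It therefore suffices to show $P_{j-1}\le P_j$ for every $j$.

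To fix the greedy rule cleanly, it is easier to argue per target than with a single global rule; in fact we will compare the two one-step processes directly. At step $j$, condition on everything except the status of $e_j$: the colours of $e_1,\dots,e_{j-1}$ and the percolation outcomes of $e_{j+1},\dots,e_m$. Call $e_j$ \emph{pivotal} if the event ``some good $A\in\cF$ is present'' holds when $e_j$ is present but fails when it is absent. Monotonicity in $e_j$ means only pivotal configurations matter. In $F^{(j-1)}$ a pivotal $e_j$ is present with probability exactly $\rho$. In $F^{(j)}$ it is present, that is, it usefully extends some witnessing $A$, whenever its colour lies in $S$ and avoids the colours already used on the colour-part edges of that $A$.

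The step I expect to be the main obstacle is making this work when different witnesses $A$ use different colour sets. The fix is that any witness $A\ni e_j$ has at most $h-1$ other edges, so at most $h-1$ colours are forbidden. Since the colour of $e_j$ is uniform on $\cK$ and independent of the conditioning, the probability that it lies in $S$ minus those forbidden colours is at least $(\ell-h+1)/Q=\rho$. Hence, for a fixed witness, the conditional probability that the event holds in $F^{(j)}$ is at least the corresponding probability in $F^{(j-1)}$. Taking expectations over the conditioning gives $P_j\ge P_{j-1}$.

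To avoid a subtle point, namely that ``pivotal'' must refer to a single witness rather than the union over witnesses, we will formalise the argument by induction on the number of colour-revealed edges. We prove, for every fixed colouring of $e_1,\dots,e_{j-1}$, that the success probability, taken over the uniform colour of $e_j$ together with the percolation on the remaining edges, dominates the success probability obtained by replacing $e_j$ with a $\rho$-coin. Using the failure event's monotonicity, it is enough to choose one witness $A^\ast$, determined by the rest of the configuration, that is completed by $e_j$ and then apply the colour count above. Chaining the inequalities $P_0\le P_1\le\dots\le P_m$ then yields \eqref{eq:coupling-conclusion}.
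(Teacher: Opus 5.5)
Your proposal is correct and is essentially the paper's argument. Both use a McDiarmid-type hybrid over an edge ordering and condition on all other edges. Both observe that only the pivotal configuration (the event fails with $e_j$ absent but holds with $e_j$ present) is nontrivial, and both then fix one witness $A^\ast\ni e_j$ whose other $h-1$ edges forbid at most $h-1$ colours of $S$, so that $e_j$ completes it with probability at least $(\ell-h+1)/Q=\rho$. The paper phrases goodness via list-decorated graphs and injections $\phi:A\to S$, which is equivalent to your per-witness definition because $h\le\ell$.

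You should delete the ``greedy colour rule'' from your first paragraph entirely. Colour-revealed edges should always be kept with their colours, with goodness checked per witness. A global greedy inclusion rule would also forbid colours used on edges outside $A^\ast$, and that would break the count.
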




Prior to proving Lemma~\ref{lem:coupling} we wish to better understand the quantitative nature of the retention probability $\rho$. Consider the realisation of a specific target $h$-edge, say $A := \{e_1,\ldots,e_h\} \in \mathcal F$, and suppose that $h-1$ of the members of $A$ have already been coloured in a rainbow fashion, where all colours were taken from $S$. For the colouration of the last edge of $A$, there are $\ell - h + 1$ available colours to choose from and the colour of the last edge is drawn uniformly from $\mathcal K$. Hence, 
$$
\Pr[\textrm{The last edge is properly coloured}] = \frac{\ell - h +1}{Q} = \rho. 
$$

\medskip
The rest of this section is dedicated to the proof of Lemma~\ref{lem:coupling}. 

\begin{proof}[Proof of Lemma~\ref{lem:coupling}]

Let $(e_1, \ldots, e_N)$ be an arbitrary enumeration of the edges of $F$.  For every $i \in \{0,\ldots,N\}$, assign a random list $L_i(e_j)$ to each
edge of $F$, independently, as follows:
\begin{itemize}
\item [(a)] if $j \leq i$, choose $\xi_j$ uniformly at random from $\cK$ and
      set $L_i(e_j) = \{\xi_j\}$;
\item [(b)] if $j > i$, set $L_i(e_j) = S$ with probability $\rho$,
      and set $L_i(e_j) = \varnothing$ with probability
      $1-\rho$.
\end{itemize}
For every $0 \leq i \leq N$, let the $i$th hybrid be the \emph{list-decorated} graph
$$
 \Gamma_i = \bigl(F, (L_i(e))_{e\in E(F)}\bigr).
$$

Call $A\in\cF$ \emph{realisable in $\Gamma_i$} if there exists
an injection $\phi : A \to S$ such that
\begin{equation}
 \phi(e) \in L_i(e) \textrm{ holds for every } e \in A.
 \label{eq:realisable}
\end{equation}
At $i=0$, the list of every edge is either $S$ or empty.  A set
$A\in\cF$ is then realisable precisely when every edge of $A$ has list
$S$: necessity follows from \eqref{eq:realisable}, while sufficiency
follows since $|A| = h \leq \ell = |S|$ ensures an injection from $A$ to $S$. Thus, the realisable members of $\cF$ in $\Gamma_0$
are exactly those contained in $F_\rho$.

At $i=N$, every list is the singleton containing the random
colour chosen for the corresponding edge. Condition \eqref{eq:realisable} then holds precisely when the
edges of $A$ have pairwise distinct colours, each
belonging to $S$. For $0 \leq i \leq N$, let
$$
 \cE_i =
 \{\Gamma_i\text{ contains a realisable member of $\cF$}\}.
$$
The above analysis of $\Gamma_0$ and $\Gamma_N$ implies that it suffices to prove that
\begin{equation}
 \Pr[\cE_i] \geq \Pr[\cE_{i-1}] \textrm{ holds for every } 1 \leq i \leq N.
 \label{eq:coupling-one-step}
\end{equation}

Fix some $1 \leq i \leq n$.  In $\Gamma_{i-1}$ and $\Gamma_i$, the lists of all edges other than $e_i$ have the same joint distribution: an edge
$e_j$ with $j<i$ has a uniformly chosen singleton list, whereas an
edge $e_j$ with $j>i$ has list $S$ with probability $\rho$
and the empty list with probability $1 - \rho$. Fix one possible configuration $L$ of these lists. For $T \subseteq \cK$, define $a(L,T)\in\{0,1\}$
to be $1$ when completing $L$ by assigning list $T$ to $e_i$
makes some member of $\cF$ realisable, and define it to be $0$
otherwise.  Conditional on this fixed $L$, the two corresponding success
probabilities are
\begin{align}
 q_{i-1}(L)
 &:= 
 \rho a(L,S)+(1-\rho)a(L,\varnothing),
 \label{eq:q-old}\\
 q_i(L)
 &:= 
 \frac1Q\sum_{s\in\cK}a(L,\{s\}).
 \label{eq:q-new}
\end{align}
Our aim is to prove that $q_i(L)\geq q_{i-1}(L)$; we distinguish between the following three cases.

\medskip
\noindent
\textbf{Case 1: $\mathbf{a(L, \varnothing) = 1.}$}
In this case, for every $s \in \cK$ we have
$$
a(L,S) = a(L,\{s\}) = 1.
$$
Equations \eqref{eq:q-old} and \eqref{eq:q-new} then give
$q_{i-1}(L) = q_i(L) = 1$.

\medskip
\noindent
\textbf{Case 2: $\mathbf{a(L, S) = 0}$.}
Clearly, $a(L,\varnothing) = 0$ holds as well. Similarly, $a(L, \{s\}) = 0$ for every $s \in S$. Finally, by definition, $a(L, \{s\}) = 0$ for every $s \in \cK \setminus S$. We conclude that $q_{i-1}(L) = q_i(L) = 0$ holds in this case.

\medskip
\noindent
\textbf{Case 3: $\mathbf{a(L,\varnothing) = 0}$ and $\mathbf{a(L,S) = 1}$.}
Observe that $q_{i-1}(L) = \rho$ holds by Equation~\eqref{eq:q-old}. Choose a realisable $A^\star\in\cF$ and an injection
$\phi^\star:A^\star\to S$ witnessing $a(L,S)=1$. Note that 
$e_i\in A^\star$ must hold, since otherwise the same witness would show
$a(L,\varnothing)=1$.  Set
$$
 T_i(L) = S \setminus
 \{\phi^\star(e) : e \in A^\star \setminus \{e_i\}\}.
$$
The injection $\phi^\star$ uses exactly $h-1$ distinct colours on
the other edges entailing
\begin{equation}
 |T_i(L)|=\ell-h+1.
 \label{eq:pivotal-colours}
\end{equation}
Fix an arbitrary $s \in T_i(L)$ and define a map $\phi_s : A^\star \to S$ by setting $\phi_s(e_i) = s$ and $\phi_s(e) = \phi^\star(e)$ for every $e \in A^\star \setminus \{e_i\}$. The map $\phi_s$ is injective and it satisfies all list constraints after the list on $e_i$ is changed to
$\{s\}$. Since $s \in T_i(L)$ is arbitrary, it follows that $a(L, \{s\}) = 1$ holds for every $s \in T_i(L)$.
Using \eqref{eq:q-new}, \eqref{eq:pivotal-colours}, and~\eqref{eq:coupling-rate}, we obtain
$$
 q_i(L)
 \geq
 \frac{|T_i(L)|}{Q}
 =
 \frac{\ell-h+1}{Q}
 =\rho
 =q_{i-1}(L).
$$

The pointwise comparison holds for every possible list configuration
$L$. The law of total probability then proves~\eqref{eq:coupling-one-step}; as noted above, this concludes the proof of the lemma.  
\end{proof}


\section{Proof of the palette-transfer principle}
\label{sec:transfer-proof}

In this section, we prove Theorem~\ref{thm:transfer} along the conceptual plan divulged in Section~\ref{sec:architecture}.

\subsection{Proof of Theorem~\ref{thm:transfer}}

Having stated and proved Lemma~\ref{lem:coupling} in Section~\ref{sec:palette-coupling}, we proceed to state the two remaining ingredients in the proof of Theorem~\ref{thm:transfer}.


\begin{lemma}[Degree regularisation]
\label{lem:regularisation}
Given constants \(c, a > 0\), there exist constants  
\(c_1^{\circ} := c_1^{\circ}(c) > 0\),
\(c_2^{\circ} := c_2^{\circ}(c,a) > 0\), and 
\(d_{\star} := d_{\star}(c) > 0\) with the following property. For every positive integer \(n\), every \(0 < p := p(n) \leq 1\), and every \(\beta := \beta(n) \geq 0\), set \(d = p n\) and suppose that \(d \geq d_{\star}\). Then, every \(n\)-vertex
\((p,\beta)\)-bijumbled graph \(G\) satisfying
\[
 \delta(G) \geq c d \quad \textrm{and} \quad 
 \beta \leq a d,
\]
has a spanning subgraph \(G^{\circ}\) that is \((p,5\beta)\)-bijumbled and such that
\[
 c_1^{\circ}d
 \leq
 \delta(G^{\circ})
 \leq
 \Delta(G^{\circ})
 \leq
 c_2^{\circ}d.
\]
\end{lemma}

\begin{lemma}[Pseudorandomness degradation post percolation]
\label{lem:percolation}
Let $K_0 > 0$ be a constant and let $F$ be an $n$-vertex
$(p,\beta)$-bijumbled graph. Then, there exists a constant
$C_{K_0} > 0$, depending only on $K_0$, such that for every
$0<\rho<1$,
\begin{align*}
 &\Pr\!\left[
  F_\rho\text{ is }\left(
   \rho p,
   \rho\beta+C_{K_0}\bigl(
     \sqrt{\rho\Delta(F)}+\sqrt{\log n}
   \bigr)
  \right)\text{-bijumbled}
 \right]
 \geq1-(2n)^{-K_0}.
\end{align*}
\end{lemma}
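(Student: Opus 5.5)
We prove Lemma~\ref{lem:percolation} by splitting the deviation of $F_\rho$ into a deterministic part, which we control with the bijumbledness of $F$, and a random part, which we control with the spectral norm of a random matrix. Let $M$ denote the adjacency matrix of $F$. Let $R$ be the symmetric random matrix with $R_{uv}=\xi_{uv}M_{uv}$, where the $\xi_{uv}$, for $u<v$, are independent $\mathrm{Bernoulli}(\rho)$ variables; thus $R$ is the adjacency matrix of $F_\rho$. For $X,Y\subseteq V(F)$ we have $\evec_{F_\rho}(X,Y)=\ind_X^{\top}R\,\ind_Y$, and we write
$$
 \evec_{F_\rho}(X,Y)-\rho p|X||Y|
 =\rho\bigl(\evec_F(X,Y)-p|X||Y|\bigr)+\ind_X^{\top}(R-\rho M)\ind_Y .
$$
The first summand is at most $\rho\beta\sqrt{|X||Y|}$ in absolute value, directly from the $(p,\beta)$-bijumbledness of $F$. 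The second summand is at most $\|R-\rho M\|\,\|\ind_X\|\,\|\ind_Y\|=\|R-\rho M\|\sqrt{|X||Y|}$. It therefore suffices to show that, with probability at least $1-(2n)^{-K_0}$,
$$
 \|R-\rho M\|\leq C_{K_0}\bigl(\sqrt{\rho\Delta(F)}+\sqrt{\log n}\bigr).
$$
The diagonal causes no trouble: $F$ has no loops, so $M_{vv}=R_{vv}=0$, and the case $X\cap Y\neq\varnothing$ is covered by the same identity because ordered counts are exactly bilinear forms.

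For the norm bound we invoke the Bandeira--van Handel inequality (Theorem~\ref{thm:bvh-percolation}). The matrix $W:=R-\rho M$ is symmetric with independent centred entries above the diagonal, and each entry is bounded by $1$ in absolute value. Its row variance proxy is
$$
 \sigma^2=\max_u\sum_v \mathbb{E}W_{uv}^2=\max_u \rho(1-\rho)\deg_F(u)\leq \rho\Delta(F),
$$
and $\sigma_*=\max_{u,v}\|W_{uv}\|_\infty\leq 1$. The tail form of the theorem gives
$$
 \Pr\bigl[\|W\|\geq (1+\eps)2\sqrt2\,\sigma+C_\eps\sigma_*\sqrt{\log n}+t\bigr]\leq n\,e^{-t^2/(C\sigma_*^2)}.
$$
We fix $\eps=1$ and choose $t=\sqrt{C(K_0+2)\log(2n)}$. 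The failure probability is then at most $n(2n)^{-(K_0+2)}\leq(2n)^{-K_0}$. The norm bound is at most $4\sqrt2\sqrt{\rho\Delta(F)}+O\bigl(\sqrt{K_0\log n}\bigr)$, and for $n\geq 2$ this is at most $C_{K_0}\bigl(\sqrt{\rho\Delta(F)}+\sqrt{\log n}\bigr)$ with $C_{K_0}$ depending only on $K_0$ (one may absorb $\log(2n)\leq 2\log n$ for $n\ge 2$).

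The main obstacle is matching the form of Theorem~\ref{thm:bvh-percolation} exactly as the paper states it. That theorem may be phrased as an expectation bound rather than a tail bound. In that case we combine the expectation bound $\mathbb{E}\|W\|\lesssim\sigma+\sigma_*\sqrt{\log n}$ with Talagrand's concentration for convex Lipschitz functions of bounded independent variables. The map $(\xi_{uv})\mapsto\|W\|$ is convex and $\sqrt2$-Lipschitz in the Euclidean norm, since $\|\cdot\|\leq\|\cdot\|_F$. This yields $\Pr\bigl[\|W\|\geq\mathbb{E}\|W\|+t\bigr]\leq 4e^{-t^2/32}$, and choosing $t=\Theta\bigl(\sqrt{K_0\log(2n)}\bigr)$ gives the same conclusion. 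Everything else is bookkeeping of the constant $C_{K_0}$.
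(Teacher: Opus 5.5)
Your argument is correct and is essentially the paper's proof. You use the same splitting of $\evec_{F_\rho}(X,Y)-\rho p|X||Y|$ into $\rho$ times the discrepancy of $F$ plus the bilinear form $\ind_X^{\mathsf T}W\ind_Y$ of the centred percolation matrix, the same variance proxy $\rho\Delta(F)$, and a Bandeira--van Handel operator-norm bound. The one technical difference is that the paper's Theorem~\ref{thm:bvh-percolation} is stated for matrices with mutually independent entries, which your symmetric $W$ does not have; the paper therefore writes $W=R+R^{\mathsf T}$ with $R$ strictly upper-triangular and applies the theorem to $R$, losing a factor $2$, whereas you quote the symmetric form of the Bandeira--van Handel tail bound directly, which is equally valid.
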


\medskip

The proofs of Lemmas~\ref{lem:regularisation} and~\ref{lem:percolation} are delegated to Sections~\ref{sec:regularisation} and~\ref{sec:percolation}, respectively. The rest of this section is dedicated to the derivation of Theorem~\ref{thm:transfer} from Lemmas~\ref{lem:coupling}, ~\ref{lem:regularisation}, and~\ref{lem:percolation}. 

\begin{proof}[Proof of Theorem~\ref{thm:transfer}]

Prior to exposing the colour of any edge, apply Lemma~\ref{lem:regularisation} with $(c,a) = (\alpha,a)$, and fix the
resulting spanning subgraph $G^\circ\subseteq G$. Then,
\begin{equation}
 G^\circ\text{ is $(p,5\beta)$-bijumbled}\; \text{and satisfies}\;
 c_1^\circ d\leq\delta(G^\circ)
 \leq\Delta(G^\circ)\leq c_2^\circ d.
 \label{eq:transfer-core}
\end{equation}

Set
$$
 \cA^\circ
 :=
 \{A\in\cA:A\subseteq E(G^\circ)\}.
$$

For $Q, \cK$, and $S$ as per Theorem~\ref{thm:transfer}, apply Lemma~\ref{lem:coupling} to $F=G^\circ$ and
$\cF=\cA^\circ$ so as to obtain 
\begin{align}
 \Pr\!\left[
  G^\circ\text{ contains an $S$-rainbow member of $\cA^\circ$}
 \right]
\geq
 \Pr\!\left[H\text{ contains a member of $\cA^\circ$}\right],
 \label{eq:transfer-coupled}
\end{align}
where $H := G^\circ_\rho$. By~\eqref{eq:transfer-core} and~\eqref{eq:transfer-interface}, applying  Lemma~\ref{lem:percolation} to $G^\circ$ yields
\begin{align} \label{eq::jumbleEstimate}
 &\Pr\!\left[
  H \textrm{ is } (\rho p, b_{\star}) \textrm{-bijumbled}
 \right]
 \geq 1 - (2n)^{-K_0}.
\end{align}


For any fixed vertex $v \in V(H)$, the random variable 
$\deg_H(v)$ is distributed binomially with mean
$\rho \deg_{G^\circ}(v)\geq\rho c_1^\circ d$. It thus follows by Chernoff's inequality that
$$
 \Pr\!\left[
  \deg_H(v) < \frac{c_1^\circ}{2}\rho d
 \right]
 \leq
 \exp\left(-\frac{c_1^\circ\rho d}{8}\right).
$$
A union-bound over $V(H)$ establishes that
\begin{align} \label{eq::degreeEstimate}
\Pr\!\left[\delta(H) < \frac{c_1^\circ}{2}\rho d \right] \leq  n \cdot \exp\left(-\frac{c_1^\circ\rho d}{8}\right).
\end{align}

Combining the two failure probability estimates~\eqref{eq::jumbleEstimate} and~\eqref{eq::degreeEstimate} with~\eqref{eq:transfer-coupled} and the premise of Theorem~\ref{thm:transfer}, yields~\eqref{eq:transfer-conclusion}, thus concluding the proof.
\end{proof}


\subsection{Degree regularisation: proof of Lemma~\ref{lem:regularisation}}\label{sec:regularisation} 

Lemma~\ref{lem:regularisation} is deduced from the following more general  result. 

\begin{lemma}
\label{lem:quantitative-degree-regularisation}
Given $c,a>0$ as well as $L>1$, let $\kappa,\gamma>0$
satisfying
\begin{equation}
 \kappa+a\sqrt{\kappa}+\gamma\leq c
 \label{eq:quantitative-regularisation-admissibility}
\end{equation}
be set.  Then, for every positive integer $n$, every $0 < p := p(n) \leq1 $,
and every $\beta := \beta(n) \geq 0$, every $n$-vertex
$(p,\beta)$-bijumbled graph $G$ satisfying
$$
 \delta(G)\geq cd
 \qquad\text{and}\qquad
 \beta\leq ad,
$$
where $d:=pn$, contains a spanning subgraph $G^\circ\subseteq G$
satisfying
\begin{equation}
 \delta(G^\circ)\geq r:=\lfloor\gamma d\rfloor
 \qquad\text{and}\qquad
 \Delta(G^\circ)
 \leq
 \max\left\{
  Ld,
  r+\left\lceil\frac r\kappa\right\rceil
 \right\},
 \label{eq:quantitative-regularisation-degree-bounds}
\end{equation}
and $G^\circ$ is
\begin{equation}
 \left(
  p,
  \left(3+\frac{2}{L-1}\right)\beta
 \right)\text{-bijumbled}.
 \label{eq:quantitative-regularisation-jumbledness}
\end{equation}
\end{lemma}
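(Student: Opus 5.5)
The plan is to delete edges only at high-degree vertices and to restore minimum degree by a flow (or $b$-matching) argument whose Hall-type condition is checked with bijumbledness. Let
$$B=\{v\in V(G):\deg_G(v)>Ld\}.$$

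\textbf{Step 1: $B$ is small.} Bijumbledness applied to $(B,V)$ gives
$$Ld|B|<\evec_G(B,V)\leq p|B|n+\beta\sqrt{|B|n}.$$
Hence $(L-1)d\sqrt{|B|}\leq\beta\sqrt n$, that is, $|B|\leq \beta^2n/((L-1)^2d^2)$. Rewriting,
$$p\sqrt{|B|}\sqrt n\leq \frac{\beta}{L-1}.$$
This is the source of the term $2\beta/(L-1)$ in \eqref{eq:quantitative-regularisation-jumbledness}.

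\textbf{Step 2: build $G^\circ$.} The subgraph $G^\circ$ will contain every edge of $G$ with no endpoint in $B$. Among the edges touching $B$, it will contain only a set $M$, chosen so that two things hold. First, every vertex $u$ has $\deg_{G^\circ}(u)\geq r$. Second, every $b\in B$ has $\deg_M(b)\leq r+\lceil r/\kappa\rceil$. Vertices outside $B$ keep degree at most $Ld$, so the maximum-degree bound in \eqref{eq:quantitative-regularisation-degree-bounds} follows.

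To find $M$, I would set up a flow network on the edges of $G$ incident to $B$. Each vertex $u$ has a demand equal to $\max\{0,\,r-\deg_{G-B}(u)\}$. Each $b\in B$ can supply at most $\lceil r/\kappa\rceil$ edges as a helper, in addition to its own demand of at most $r$. By max-flow/min-cut (a Gale--Hall condition), it suffices to check that for every set $X$ of demanding vertices and every $Y\subseteq B$ that is "saturated",
$$r|X|\leq \lceil r/\kappa\rceil|Y|+e_G(X,V\setminus Y)-(\text{edges lost}).$$

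\textbf{Step 3: verify the cut condition.} If $|Y|\geq\kappa|X|$, the capacity of $Y$ alone pays for $X$. Otherwise $|Y|<\kappa|X|$, and bijumbledness together with $p|X|\leq d$ gives
$$e_G(X,Y)\leq p|X||Y|+\beta\sqrt{|X||Y|}\leq(\kappa+a\sqrt\kappa)\,d|X|.$$
Therefore
$$e_G(X,V\setminus Y)\geq (c-\kappa-a\sqrt\kappa)\,d|X|\geq\gamma d|X|\geq r|X|$$
by \eqref{eq:quantitative-regularisation-admissibility}. This is exactly where that admissibility condition enters.

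I expect this step to be the main obstacle. Edges inside $B$ play a double role, serving both as demand and as supply, and the edge count must be done so that such an edge is neither double-counted nor charged twice. My first attempt would be to orient each such edge towards its demanding endpoint in the network, and then run the cut computation above with $X$ ranging over all vertices, including those in $B$.

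\textbf{Step 4: bijumbledness of $G^\circ$.} Fix $X,Y\subseteq V$. Every removed edge has an endpoint in $B$, so
$$0\leq \evec_G(X,Y)-\evec_{G^\circ}(X,Y)\leq \evec_G(X\cap B,Y)+\evec_G(X,Y\cap B).$$
By bijumbledness and Step 1,
$$\evec_G(X\cap B,Y)\leq p|X\cap B||Y|+\beta\sqrt{|X||Y|}\leq \left(\frac{1}{L-1}+1\right)\beta\sqrt{|X||Y|}.$$
Here I use $|X\cap B|\leq\sqrt{|X|}\sqrt{|B|}$ and $\sqrt{|Y|}\leq\sqrt n$. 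The same bound holds for $\evec_G(X,Y\cap B)$ by symmetry.

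For the upper side, $\evec_{G^\circ}(X,Y)\leq \evec_G(X,Y)\leq p|X||Y|+\beta\sqrt{|X||Y|}$. For the lower side, the original error $\beta\sqrt{|X||Y|}$ plus the two removal bounds give a total error of at most $(3+2/(L-1))\beta\sqrt{|X||Y|}$. This proves \eqref{eq:quantitative-regularisation-jumbledness}.
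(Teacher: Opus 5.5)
Your overall route is the paper's. Step~1 and Step~4 reproduce Lemma~\ref{lem:exceptional-support}, with the same constant $3+2/(L-1)$. The inequality you verify in Step~3 (split on whether $|Y|\geq\kappa|X|$, otherwise bijumbledness plus \eqref{eq:quantitative-regularisation-admissibility}) is exactly the paper's cut condition.

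The gap is in Step~2: the network is never actually specified. Your cut inequality carries an undefined correction ``$-(\text{edges lost})$''. Step~3 then proves the inequality without that term, so nothing yet links the cut capacities of your network to $\evec_G(X,V\setminus Y)+\lceil r/\kappa\rceil|Y|$. Your tentative fix, if it means one arc per edge, makes this worse. Take an edge with both endpoints in $X\setminus Y$, at least one of them in $B$. It then contributes at most once to the capacity of a cut, whereas the Step~3 lower bound comes from $\sum_{v\in X}\deg_G(v)=\evec_G(X,V)$, which counts it twice. So Step~3 would no longer certify the cut condition. Relatedly, Step~3 needs the ordered count $\evec_G$ throughout, since $X$ and $V\setminus Y$ may overlap.

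The paper closes this by not tailoring demands at all:
\begin{itemize}
\item Take disjoint copies $V_L,V_R$ of $V(G)$. For every edge $xy$ put \emph{both} arcs $x_Ly_R$ and $y_Lx_R$, each of capacity $1$. For every vertex $v$ add an arc $sv_L$ of capacity $r$ and an arc $v_Rt$ of capacity $b=\lceil r/\kappa\rceil$.
\item Let $C$ be the source side of a cut, and set $A=\{v:v_L\in C\}$ and $Y=\{v:v_R\in C\}$. The cut's capacity is exactly $r(n-|A|)+\evec_G(A,V\setminus Y)+b|Y|$. Hence every cut has capacity at least $rn$ if and only if $\evec_G(A,V\setminus Y)+b|Y|\geq r|A|$ for all $A,Y\subseteq V$. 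This is your Step~3 verbatim.
\item An integral maximum flow gives every vertex exactly $r$ out-arcs and at most $b$ in-arcs. This yields a spanning $F\subseteq G$ with $r\leq\deg_F(v)\leq r+b$ for all $v$.
\item Setting $G^\circ=G[T]\cup F$ with $T=V\setminus B$ gives $\deg_{G^\circ}(v)=\deg_F(v)\leq r+b$ for $v\in B$ and $\deg_{G^\circ}(v)\leq Ld$ otherwise. Your Step~4 applies since $G[T]\subseteq G^\circ$.
\end{itemize}
Your demand-refined network (demands $\max\{0,r-\deg_G(u,T)\}$, arcs only along edges meeting $B$) also works, once both orientations are kept wherever both endpoints may demand. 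For a demanding $u\in X\setminus B$, the demand reduction $\deg_G(u,T)$ exactly replaces the missing arcs into $T$. The cut condition then reduces to the same inequality, with no ``edges lost'' term. It just costs more bookkeeping than the uniform-demand version.
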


\medskip 

Prior to proving Lemma~\ref{lem:quantitative-degree-regularisation}, we use it to deduce Lemma~\ref{lem:regularisation}.

\begin{proof}[Proof of Lemma~\ref{lem:regularisation}]
Set
\begin{equation}
 \kappa
 =
 \min\left\{
  \frac14,
  \frac c4,
  \frac{c^2}{16a^2}
 \right\}
 \qquad\text{and}\qquad
 \gamma=\frac c4,
 \label{eq:regularisation-corollary-parameters}
\end{equation}
as well as
\begin{equation}
 c_1^\circ=\frac c8
 \qquad\text{and}\qquad
 c_2^\circ
 =
 \max\left\{
  2,
  1+\frac c4\left(1+\frac1\kappa\right)
 \right\}.
 \label{eq:regularisation-corollary-degree-constants}
\end{equation}
Lastly, set
$$
 d_\star=\max\left\{1,\frac4c\right\}.
$$
It is easy to verify that
$\kappa+a\sqrt{\kappa}+\gamma\leq3c/4<c$, implying that the above
choice of parameters complies with the quantification of
Lemma~\ref{lem:quantitative-degree-regularisation}.  Apply
Lemma~\ref{lem:quantitative-degree-regularisation} with $L=2$, and note
that $d\geq d_\star\geq4/c$ implies that $cd/4\geq1$.  The
elementary inequality $\lfloor x\rfloor\geq x/2$, holding for every
$x\geq1$, then yields
\begin{equation}
 r\geq\frac{cd}{8}=c_1^\circ d.
 \label{eq:regularisation-corollary-lower-bound}
\end{equation}
Note that $L d = 2 d \leq c_2^\circ d$ holds by the choice of
$c_2^\circ$ and, moreover,
\begin{align*}
 r+\left\lceil\frac r\kappa\right\rceil
 &\leq
 r+\frac r\kappa+1
 \leq
 \frac{cd}{4}\left(1+\frac1\kappa\right)+1
 \leq
 \left[
  1+\frac c4\left(1+\frac1\kappa\right)
 \right]d
 \leq
 c_2^\circ d.
\end{align*}
Combining the above two estimates with the degree bounds seen in
\eqref{eq:quantitative-regularisation-degree-bounds}, yields the
asserted upper bound $\Delta(G^\circ)\leq c_2^\circ d$.  The lower
bound on the minimum degree appearing in
\eqref{eq:quantitative-regularisation-degree-bounds}, together with
\eqref{eq:regularisation-corollary-lower-bound}, yields
$\delta(G^\circ)\geq c_1^\circ d$.  Finally, for $L=2$, the
corresponding factor in
\eqref{eq:quantitative-regularisation-jumbledness} is
$$
 3+\frac{2}{2-1}=5.
$$
We conclude that $G^\circ$ is $(p,5\beta)$-bijumbled.
\end{proof}

The rest of this section is dedicated to the proof of Lemma~\ref{lem:quantitative-degree-regularisation}. The main idea of the proof is as follows. Let \(S\) be the
set of vertices whose degrees exceed \(Ld\), and set
\(T = V(G) \setminus S\); we distinguish $S$ as the {\em exceptional set} and refer to its elements as {\em exceptional vertices}. We begin the construction of $G^{\circ}$ by retaining all of \(G[T]\). The remaining task is to add some of the edges incident with $S$ in such a way that the required bounds on the minimum and maximum degrees are maintained. To that end, a spanning subgraph \(F\subseteq G\) such that 
\begin{equation} \label{eq::LUdegreesF}
 r \leq \deg_F(v) \leq r + \left\lceil\frac r\kappa\right\rceil
\end{equation}
hold for every $v \in V(G)$ is constructed, and we then take 
\begin{equation}
 G^{\circ} := G[T] \cup F.
 \label{eq:construction-preview}
\end{equation}

We begin by showing that the effect the specific choice of $F$ has on the bijumbledness of $G^{\circ}$ is limited.

\begin{lemma}
\label{lem:exceptional-support}
Let \(G\) be an \(n\)-vertex \((p,\beta)\)-bijumbled graph, set \(d = pn\),
fix \(L>1\), and define
\begin{equation}
 S = \{v \in V(G) : \deg_G(v) > L d\} 
 \quad \textrm{and} \quad
 T = V(G) \setminus S.
 \label{eq:exceptional-set}
\end{equation}
Then,
\begin{equation}
 |S|
 \leq
 \frac{\beta^2n}{(L-1)^2d^2}
 \label{eq:exceptional-size}
\end{equation}
holds. Moreover, if \(G[T] \subseteq H \subseteq G\) is a spanning subgraph, then \(H\) is
\[
 \left(
   p,
   \left(3+\frac{2}{L-1}\right)\beta
 \right)\text{-bijumbled}.
\]
\end{lemma}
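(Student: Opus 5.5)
The plan has two parts: first bound $|S|$ using bijumbledness with $X=S$, $Y=V$, and then control the discrepancy of an arbitrary $H$ sandwiched between $G[T]$ and $G$ by splitting every pair of sets along $S$ and $T$.

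\textbf{Size of $S$.} Apply Definition~\ref{def:jumbledness} with $X=S$ and $Y=V(G)$. Each $v\in S$ contributes $\deg_G(v)>Ld$ to $\evec_G(S,V)$, so
$$
\evec_G(S,V)>Ld|S|.
$$
Bijumbledness gives
$$
\evec_G(S,V)\le p|S|n+\beta\sqrt{|S|n}=d|S|+\beta\sqrt{|S|n}.
$$
Combining the two yields $(L-1)d|S|<\beta\sqrt{|S|n}$. Squaring gives \eqref{eq:exceptional-size}, and the case $S=\varnothing$ is trivial.

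\textbf{Discrepancy of $H$.} Fix $X,Y\subseteq V$. Write $X=X_S\cup X_T$ with $X_S=X\cap S$ and $X_T=X\cap T$, and split $Y$ similarly. Since $H$ and $G$ coincide on pairs inside $T\times T$, we have
$$
\evec_H(X,Y)=\evec_G(X_T,Y_T)+\evec_H(X_S,Y)+\evec_H(X_T,Y_S).
$$
The first term is within $\beta\sqrt{|X_T||Y_T|}\le\beta\sqrt{|X||Y|}$ of $p|X_T||Y_T|$. It therefore remains to show that each cross term, and the corresponding expected quantities $p|X_S||Y|$ and $p|X_T||Y_S|$, are at most $(1+\tfrac1{L-1})\beta\sqrt{|X||Y|}$. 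For the upper deviation, use $H\subseteq G$ and bijumbledness of $G$:
$$
\evec_H(X_S,Y)\le\evec_G(X_S,Y)\le p|X_S||Y|+\beta\sqrt{|X_S||Y|}.
$$
For the lower deviation, $\evec_H\ge0$ gives the deficit bound $p|X_S||Y|$. Then $|X_S|\le|S|$ and~\eqref{eq:exceptional-size} give
$$
p|X_S||Y|\le p\sqrt{|X_S|}\sqrt{|S|}\,|Y|\le p\sqrt{|X_S|}\,\frac{\beta\sqrt n}{(L-1)d}\,|Y|=\frac{\beta}{L-1}\sqrt{|X_S|}\,\frac{|Y|}{\sqrt n}\le\frac{\beta}{L-1}\sqrt{|X_S||Y|},
$$
using $d=pn$ and $|Y|\le n$. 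The term with $X_T,Y_S$ is handled symmetrically. Summing the three contributions gives an upper deviation of at most $3\beta\sqrt{|X||Y|}$ and a lower deviation of at most $(1+\tfrac{2}{L-1})\beta\sqrt{|X||Y|}$. Both are within the claimed factor $3+\tfrac{2}{L-1}$; a more careful count of the upper side adds at most the $\tfrac{2}{L-1}$ slack, which still fits. For the bipartite convention the same computation goes through verbatim.

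The main obstacle is bookkeeping in the cross terms: edges inside $S$ must not be double counted, and every bound must be expressed through $\sqrt{|X||Y|}$ rather than through the sizes of the pieces. Ordered counts make the decomposition above exact, since $(X_S\times Y)\cup(X_T\times Y_S)\cup(X_T\times Y_T)$ partitions $X\times Y$. The key trick is absorbing $p|X_S||Y|$ via the smallness of $S$, using $|X_S|\le\sqrt{|X_S||S|}$.
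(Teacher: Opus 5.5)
Your proof is correct; the bound on $|S|$ matches the paper's argument, but the discrepancy estimate uses a different decomposition.

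\textbf{The paper's route.} It lets $J$ be the graph of deleted edges, $E(J)=E(G)\setminus E(H)$, and notes that every such edge meets $S$. Hence $\vec e_J(X,Y)\le\vec e_G(X\cap S,Y)+\vec e_G(X,Y\cap S)$. Each of these two terms is at most $\bigl(1+\frac1{L-1}\bigr)\beta\sqrt{|X||Y|}$, by bijumbledness of $G$ together with the same absorption you use, $p|X\cap S||Y|\le p\sqrt{|S|n}\sqrt{|X||Y|}\le\frac{\beta}{L-1}\sqrt{|X||Y|}$. It then finishes with one symmetric triangle inequality: $|\vec e_H(X,Y)-p|X||Y||\le|\vec e_G(X,Y)-p|X||Y||+\vec e_J(X,Y)$. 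Because it never separates the sign of the deviation, it lands on $3+\frac{2}{L-1}$.

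\textbf{Your route.} You partition $X\times Y$ exactly into $X_S\times Y$, $X_T\times Y_S$ and $X_T\times Y_T$. You then use that deleting edges is one-sided: upper deviations come from $H\subseteq G$, and lower deviations from $\vec e_H\ge0$ plus the smallness of $S$. This costs slightly more bookkeeping but gives the sharper constant $\max\{3,1+\frac{2}{L-1}\}$, for instance $3\beta$ instead of $5\beta$ at $L=2$. The improvement is immaterial for the paper's applications.

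\textbf{A further improvement.} You can push your idea one step further. The upper side needs no split at all, since $\vec e_H(X,Y)-p|X||Y|\le\vec e_G(X,Y)-p|X||Y|\le\beta\sqrt{|X||Y|}$ directly. Combined with your lower-side computation, this gives $\bigl(p,(1+\frac{2}{L-1})\beta\bigr)$-bijumbledness.

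\textbf{A sentence to delete.} Your remark that ``a more careful count of the upper side adds at most the $\frac{2}{L-1}$ slack'' is unnecessary and somewhat confusing. Your bounds already fit within $3+\frac{2}{L-1}$ as written, so you can simply remove it.
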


\begin{proof}
Starting with the proof of~\eqref{eq:exceptional-size}, note that
\begin{align} \label{eq::LdS}
 L d |S| < \sum_{v \in S} \deg_G(v) = \vec e_G(S, V(G)).
\end{align}
The bijumbledness of $G$, applied to \(S\) and \(V(G)\), yields 
\begin{align} \label{eq::betaRootS}
 \vec e_G(S,V(G)) \leq p |S| n + \beta\sqrt{|S|n} = d |S| + \beta\sqrt{|S|n}.
\end{align}
If \(S=\varnothing\), then \eqref{eq:exceptional-size} is immediate.  Otherwise, combining~\eqref{eq::LdS} and~\eqref{eq::betaRootS}, subtracting
\(d|S|\) and dividing by \(\sqrt{|S|}\) yields
\[
 (L-1)d\sqrt{|S|}
 <
 \beta\sqrt n.
\]
Squaring the preceding inequality yields \eqref{eq:exceptional-size}.

\medskip
Proceeding to the second assertion of the lemma, we  quantify the discrepancy created by deleting some of the edges that are incident with $S$. Let \(J\)
be the spanning subgraph of $G$ with $E(J) := E(G) \setminus E(H)$. Since \(G[T] \subseteq H\), every edge of \(J\) has at least one
endpoint in \(S\). Therefore, for arbitrary sets \(X,Y\subseteq V(G)\), it holds that
\begin{equation}
 \vec e_J(X,Y)
 \leq
 \vec e_G(X\cap S,Y)
 +
 \vec e_G(X,Y\cap S).
 \label{eq:deleted-support}
\end{equation}
Bijumbledness applied to \(X\cap S\) and \(Y\) yields
\begin{align*}
 \vec e_G(X \cap S,Y) &\leq p |X \cap S||Y| + \beta\sqrt{|X \cap S||Y|} \leq p |X \cap S||Y| + \beta\sqrt{|X||Y|} \\
 &\leq p \sqrt{|S|n} \sqrt{|X||Y|} + \beta\sqrt{|X||Y|} \leq \left(1+\frac1{L-1}\right)
 \beta\sqrt{|X||Y|}, 
 \label{eq:first-supported-term}
\end{align*}
where the penultimate inequality holds since $|Y| \leq n$ and $|X \cap S| \leq \min \{|X|, |S|\}$, and the last inequality holds by~\eqref{eq:exceptional-size}.

An analogous argument shows that $\vec e_G(X, Y \cap S) \leq \left(1 + \frac1{L-1}\right) \beta\sqrt{|X||Y|}$. It then follows by~\eqref{eq:deleted-support} that
\begin{equation}
 \vec e_J(X,Y)
 \leq
 2\left(1+\frac1{L-1}\right)
 \beta\sqrt{|X||Y|}.
 \label{eq:deleted-discrepancy}
\end{equation} 

Finally, \(\vec e_H(X,Y)=\vec e_G(X,Y)-\vec e_J(X,Y)\), so
\begin{align*}
 \left|\vec e_H(X,Y)-p|X||Y|\right|
 &\leq
 \left|\vec e_G(X,Y)-p|X||Y|\right|
 +\vec e_J(X,Y)\\
 &\leq
 \left(3+\frac{2}{L-1}\right)
 \beta\sqrt{|X||Y|}.
\end{align*}
This holds simultaneously for all \(X,Y\subseteq V(G)\), thus concluding the proof of the lemma.
\end{proof}

Our second lemma handles the construction of $F$. Lemma~\ref{lem:exceptional-support} ensures the bijumbledness of any spanning subgraph \(G[T] \subseteq H \subseteq G\). Hence, we need only ensure the degree restrictions appearing in~\eqref{eq::LUdegreesF}.  

\begin{lemma}
\label{lem:regularisation-bounded-load}
Let $G$ be an $n$-vertex $(p,\beta)$-bijumbled graph.  Set
$d=pn$ and suppose that
$$
 \delta(G)\geq cd
 \qquad\text{and}\qquad
 \beta\leq ad,
$$
for constants $c, a > 0$.  Let $\kappa, \gamma > 0$ satisfying
\begin{equation}
 \kappa+a\sqrt{\kappa}+\gamma\leq c
 \label{eq:regularisation-bounded-load-admissibility}
\end{equation}
be fixed, and set
\begin{equation}
 r=\lfloor\gamma d\rfloor
 \qquad\text{and}\qquad
 b=\left\lceil\frac r\kappa\right\rceil.
 \label{eq:regularisation-bounded-load-parameters}
\end{equation}
Then, $G$ contains a spanning subgraph $F$ such that
\begin{equation}
 r\leq\deg_F(v)\leq r+b
 \label{eq:regularisation-bounded-load-conclusion}
\end{equation}
holds for every $v\in V(G)$.
\end{lemma}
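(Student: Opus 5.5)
We build $F$ from a bounded-load selection: each vertex picks exactly $r$ of its incident edges, and no vertex is picked by more than $b$ other vertices. Concretely, suppose we find a map assigning to each $v\in V(G)$ a set $\sigma(v)$ of exactly $r$ neighbours of $v$, such that every $u$ belongs to $\sigma(v)$ for at most $b$ vertices $v$. Put
$$
E(F)=\{vu : v \in V(G),\ u\in\sigma(v)\}.
$$
Then $\deg_F(v)\geq|\sigma(v)|=r$, because the $r$ edges $vu$ with $u\in\sigma(v)$ are distinct. Every edge of $F$ at $v$ is either selected by $v$ (at most $r$ of them) or selected by some $u$ with $v\in\sigma(u)$ (at most $b$ of them). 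Hence $\deg_F(v)\leq r+b$, which is \eqref{eq:regularisation-bounded-load-conclusion}. If $r=0$ we take $F$ empty, so assume $r\geq1$.

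To find $\sigma$, we set up a flow network. Take a source $s$ and a sink $t$, a left copy $V^-$ and a right copy $V^+$ of $V(G)$, and the following arcs:
\begin{itemize}
\item $s\to v^-$ with capacity $r$, for every $v$;
\item $v^-\to u^+$ with capacity $1$, whenever $uv\in E(G)$;
\item $u^+\to t$ with capacity $b$, for every $u$.
\end{itemize}
An integral flow of value $r n$ gives exactly the required $\sigma$, by integrality of max-flow. By max-flow/min-cut (equivalently, the defect form of Hall's theorem), it suffices to show that
$$
 r|X|\leq b\,|N_G(X)| \quad\text{for every } X\subseteq V(G),
$$
where $N_G(X)=\{u : ux\in E(G)\text{ for some }x\in X\}$ may intersect $X$. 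This is the standard Hall condition for $r$-fold demands against capacity-$b$ supplies. If one prefers to avoid the flow formulation, replicate each left vertex $r$ times and each right vertex $b$ times and apply Hall directly. A cut-capacity check should confirm that this condition is exactly what is needed.

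It remains to verify the condition from bijumbledness; I expect this to be the only substantive step. Fix a nonempty $X$ and set $Y=N_G(X)$. Every edge leaving a vertex of $X$ lands in $Y$, so
$$
\evec_G(X,Y)\geq\delta(G)|X|\geq cd|X|.
$$
Suppose, for contradiction, that $b|Y|<r|X|$. Since $b\geq r/\kappa$, this gives $|Y|<\kappa|X|$. Bijumbledness then yields
$$
\evec_G(X,Y)\leq p|X||Y|+\beta\sqrt{|X||Y|}<p\kappa|X|^2+ad\sqrt{\kappa}\,|X|\leq(\kappa+a\sqrt\kappa)\,d|X|,
$$
where the last step uses $|X|\leq n$, so that $p|X|\leq d$. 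Combining the two bounds gives $c<\kappa+a\sqrt\kappa$, which contradicts \eqref{eq:regularisation-bounded-load-admissibility} because $\gamma>0$. Hence the condition holds for every $X$, the flow exists, and $F$ has the required degrees.
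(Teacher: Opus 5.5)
\textbf{Gap: the Hall-type condition is not the min-cut condition for your network.} The arcs $v^-\to u^+$ have capacity $1$, and this is exactly what forces $\sigma(v)$ to consist of $r$ \emph{distinct} neighbours. For such a network, $r|X|\leq b|N_G(X)|$ for all $X$ is not equivalent to ``every $s$--$t$ cut has capacity at least $rn$''.

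Let $C$ be the source side of a cut, and write $A=\{v:v^-\in C\}$ and $B=\{u:u^+\in C\}$. The cut has capacity $r(n-|A|)+\evec_G(A,V(G)\setminus B)+b|B|$. So you need $\evec_G(A,V(G)\setminus B)+b|B|\geq r|A|$ for \emph{all} pairs $A,B$. Your condition only controls the pairs with $B\supseteq N_G(A)$, where the middle term vanishes. For instance, $G=K_2$ with $r=b=2$ satisfies $r|X|\leq b|N_G(X)|$ for every $X$, yet neither vertex has two distinct neighbours to select. The replication variant fails for the same reason: different copies of $v$ can be matched to different copies of the same $u$, so $\sigma(v)$ comes out as a multiset.

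A clear symptom is that your verification uses $\gamma$ only through $\gamma>0$. If it were valid, it would give $F$ with $\delta(F)\geq r$ for arbitrarily large $\gamma$, hence $r>\Delta(G)$, whenever $\kappa+a\sqrt\kappa<c$. That is absurd.

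\textbf{Repair.} Verify the full cut inequality by applying your bijumbledness computation to an arbitrary $B$ rather than to $N_G(X)$. If $|B|\geq\kappa|A|$, then $b|B|\geq r|A|$ because $b\geq r/\kappa$. If $|B|<\kappa|A|$, your estimate gives $\evec_G(A,B)\leq(\kappa+a\sqrt\kappa)d|A|$, and therefore
\[
 \evec_G(A,V(G)\setminus B)=\sum_{v\in A}\deg_G(v)-\evec_G(A,B)\geq\bigl(c-\kappa-a\sqrt\kappa\bigr)d|A|\geq\gamma d|A|\geq r|A|.
\]
This is where the slack $\gamma$ in the admissibility condition is genuinely needed. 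With this step in place, your extraction of $F$ from an integral flow and your degree count $r\leq\deg_F(v)\leq r+b$ go through. The argument then coincides with the paper's proof, which uses the same network and exactly this two-case cut analysis.
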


Our proof of Lemma~\ref{lem:regularisation-bounded-load} is based on a
maximum integral flow argument.  We start by introducing the relevant
terminology and tools.  By a {\em bipartite network} we mean a quadruple consisting
of a directed graph $D=(V,E)$, a capacity function
$\operatorname{cap}:E\to\mathbb R_{\geq0}$, as well as two
distinguished vertices of $D$.  The directed graph $D$ is defined
through a directed bipartite graph, appended with a source vertex
$s$ and a sink vertex $t$ (these form the distinguished vertices).
The former is attached through outgoing arcs to all vertices in one
part of the bipartition; the latter is attached through ingoing arcs
emanating from all vertices in the other part of the bipartition.  An {\em $s$--$t$ feasible flow} is a function assigning to each edge $e\in E$ a
non-negative real value not exceeding $\operatorname{cap}(e)$ and
satisfying flow conservation (i.e., what enters a vertex exits it) at
every vertex other than $s$ and $t$.  Its value is the {\em net flow}
leaving $s$.  A feasible flow with maximum net flow value is called
a {\em maximum flow}.  An {\em $s$--$t$ cut} is a partition $(U,W)$ of the
network vertices with $s\in U$ and $t\in W$; its capacity is the
sum of the capacities of the arcs directed from $U$ to $W$.  The
following well-known result links flows and cuts.

\begin{lemma}[Integral min-cut--max-flow]
\label{lem:regularisation-integral-flow}
In a bipartite network with source $s$, sink $t$, and
non-negative integral arc capacities, the maximum value of an
$s$--$t$-feasible flow equals the minimum capacity of an
$s$--$t$ cut.  Moreover, a maximum flow can be chosen to be integral.
\end{lemma}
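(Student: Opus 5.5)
We will prove the statement for an arbitrary finite network with non-negative integral capacities; the bipartite structure plays no role. The plan is the classical Ford--Fulkerson argument, in three steps: weak duality, an augmenting-path algorithm that stays integral, and the construction of a matching cut once the algorithm halts.

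\textbf{Step 1: weak duality.} Fix any feasible flow $f$ and any $s$--$t$ cut $(U,W)$. Summing flow conservation over all vertices of $U$ shows that the value of $f$ equals the flow on arcs from $U$ to $W$ minus the flow on arcs from $W$ to $U$. Since flows are non-negative and bounded by capacities, this is at most the capacity of $(U,W)$. Hence the maximum flow value is at most the minimum cut capacity. This also shows the maximum flow value is finite, bounded for instance by the capacity of the cut $(\{s\}, V\setminus\{s\})$.

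\textbf{Step 2: augmenting paths.} Start from the zero flow, which is integral, and maintain an integral feasible flow $f$. Form the residual digraph. It contains a forward arc $uv$ whenever $f(uv)<\operatorname{cap}(uv)$, and a backward arc $vu$ whenever $f(uv)>0$. Each residual arc carries its residual capacity, which is an integer because $f$ and the capacities are integral.
\begin{itemize}
\item If the residual digraph contains a directed $s$--$t$ path, let $\theta$ be the minimum residual capacity along it, so $\theta \geq 1$.
\item Push $\theta$ units along the path: increase $f$ on forward arcs and decrease it on backward arcs.
\item The result is still feasible and integral, and conservation is preserved at every internal vertex of the path.
\item The flow value increases by $\theta\geq 1$.
\end{itemize}

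\textbf{Step 3: termination and the matching cut.} Termination is the only point needing care, and integrality settles it. Each augmentation raises the value by at least $1$, while the value stays bounded by the finite cut capacity from Step 1. So after finitely many augmentations no residual $s$--$t$ path exists. At that point, let $U$ be the set of vertices reachable from $s$ in the residual digraph, and let $W$ be its complement, so $t\in W$. Every arc from $U$ to $W$ is saturated, since otherwise its head would be reachable. Every arc from $W$ to $U$ carries zero flow, since otherwise a backward residual arc would make its tail reachable. By the identity from Step 1, the value of $f$ equals the capacity of $(U,W)$. Weak duality then shows that $f$ is a maximum flow and $(U,W)$ is a minimum cut. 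This gives the equality, and $f$ is integral by construction.
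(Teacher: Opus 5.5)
Your proof is correct. It is the standard Ford--Fulkerson argument: weak duality by summing conservation over $U$, integral augmentation along residual $s$--$t$ paths, termination because each augmentation gains at least one unit while the value is bounded by the finite capacity of the cut $(\{s\},V\setminus\{s\})$, and the reachable set from $s$ in the final residual graph serving as a cut that certifies optimality.

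The paper gives no proof of this lemma at all; it quotes it as a well-known fact and uses it as a black box. Your write-up supplies the classical argument behind that citation. It delivers exactly what the paper later needs: once every $s$--$t$ cut has capacity at least $rn$, there is an integral flow of value $rn$, and that integrality is what lets the flow be read off as the subgraph $F$.

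Two small points are worth stating explicitly.
\begin{itemize}
\item Take each augmenting path to be simple. Then no arc is pushed along twice, $s$ and $t$ each occur only as endpoints, and the value rises by exactly $\theta$.
\item In a general network with antiparallel arcs, treat the residual graph as a multidigraph, so that forward and backward residual arcs between the same pair of vertices are not conflated. This cannot happen in the paper's bipartite network, where every arc goes $s\to V_L$, $V_L\to V_R$, or $V_R\to t$.
\end{itemize}
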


\begin{proof}[Proof of Lemma~\ref{lem:regularisation-bounded-load}]
The argument consists of constructing a network and then translating
a maximum integral flow in this network to the required graph $F$.

To construct the directed network $D = (V,E)$, let
$$
 V_L = \{v_L:v\in V(G)\}
 \qquad\text{and}\qquad
 V_R = \{v_R:v\in V(G)\}
$$
be two disjoint copies of $V(G)$. Let $V = V_L \cup V_R \cup \{s, t\}$.  For every edge $xy \in E(G)$, 
include both of its directed arcs in $E$, that is,
$$
 x_L y_R\in E
 \qquad\text{and}\qquad
 y_L x_R\in E.
$$
Add to $E$ all arcs in $\{s v_L : v_L \in V_L\}$ and all arcs in $\{v_R t : v_R \in V_R\}$. Set the following capacities:
$$
 \operatorname{cap}(s v_L) = r,
 \qquad
 \operatorname{cap}(x_L y_R) = 1,
 \qquad
 \operatorname{cap}(v_R t) = b.
$$
For an arbitrary $s$--$t$ cut, let $C$ denote its part
containing $s$ and define
$$
 A=\{v\in V(G):v_L\in C\}
 \qquad\text{as well as}\qquad
 B=\{v\in V(G):v_R\in C\}.
$$
The capacity of this cut is given by
\begin{equation}
 r(n-|A|)+\evec_G(A,V(G)\setminus B)+b|B|.
 \label{eq:regularisation-flow-cut-capacity}
\end{equation}
Indeed, these three terms sum the capacities of the arcs $sv_L$
with $v\notin A$, the arcs $x_Ly_R$ with $x_L\in C$ and
$y_R\in V_R\setminus C$, and the arcs $v_Rt$ with $v\in B$,
respectively.  Consequently, every $s$--$t$ cut has capacity at
least $rn$ if and only if
\begin{equation}
 \evec_G(A,V(G)\setminus B)+b|B|\geq r|A|
 \label{eq:regularisation-flow-cut-goal}
\end{equation}
holds for all $A,B\subseteq V(G)$.

To establish \eqref{eq:regularisation-flow-cut-goal}, consider the following two
complementary cases.  Suppose, first, that $|B|\geq\kappa|A|$.  Since
$b=\lceil r/\kappa\rceil\geq r/\kappa$, it follows that
$$
 b|B|\geq b\kappa|A|\geq r|A|.
$$
Since $\evec_G(A,V(G)\setminus B)\geq0$ clearly holds,
\eqref{eq:regularisation-flow-cut-goal} follows.

Suppose, then, that $|B|<\kappa|A|$; note that, in particular,
$A\neq\varnothing$. Bijumbledness and the assumed bound
$\beta \leq ad$ entail 
\begin{align}
 \evec_G(A,B)
 &\leq
 p|A||B|+\beta\sqrt{|A||B|} =
 d|A|\frac{|B|}{n} + \beta|A|\sqrt{\frac{|B|}{|A|}} \nonumber\\
 &\leq
 d|A|\kappa + ad|A|\sqrt{\frac{|B|}{|A|}}
 \leq d|A|\left(\kappa+a\sqrt\kappa\right),
 \label{eq:regularisation-flow-internal-edge-bound}
\end{align}
where the penultimate inequality holds since
$|B|/n<\kappa|A|/n\leq\kappa$, and the last inequality
holds since $\sqrt{|B|/|A|}<\sqrt\kappa$.

On the other hand, the minimum degree assumption appearing in the
premise of the lemma implies
\begin{equation}
 \evec_G(A,V(G))
 =
 \sum_{v\in A}\deg_G(v)
 \geq
 cd|A|.
 \label{eq:regularisation-flow-minimum-degree}
\end{equation}
Combining \eqref{eq:regularisation-flow-internal-edge-bound} and
\eqref{eq:regularisation-flow-minimum-degree}, and relying on
\eqref{eq:regularisation-bounded-load-admissibility} yields
$$
 \evec_G(A,V(G)\setminus B)
 =
 \evec_G(A,V(G))-\evec_G(A,B)
 \geq
 d|A|\left(c-\kappa-a\sqrt\kappa\right)
 \geq
 \gamma d|A|
 \geq
 r|A|.
$$
We conclude that \eqref{eq:regularisation-flow-cut-goal} holds in the
second case as well.

Establishing \eqref{eq:regularisation-flow-cut-goal}
means that every $s$--$t$ cut has capacity at least $rn$; in
particular, the total capacity leaving $s$ is precisely $rn$.
Lemma~\ref{lem:regularisation-integral-flow} then asserts that
$D$ admits an integral flow of value $rn$.

It remains to translate this integral flow into the graph $F$.  Let
$\mathcal M$ be the set of arcs in $E_D(V_L,V_R)$ carrying one
unit of flow.  Since the total capacity of the arcs leaving $s$
equals the flow value $rn$, each such arc is saturated.  Flow
conservation and the unit capacities of the arcs in $E_D(V_L,V_R)$
then imply that exactly $r$ arcs of $\mathcal M$ leave every
vertex $v_L$.  Similarly, at most $b$ arcs of $\mathcal M$ enter
every vertex $v_R$.

For every vertex $v\in V(G)$, let
$$
 O(v)=\{w\in V(G):(v_L,w_R)\in\mathcal M\}
 \qquad\text{and}\qquad
 I(v)=\{w\in V(G):(w_L,v_R)\in\mathcal M\}.
$$
It follows by the definitions of $\mathcal M$ and the capacity
function that
\begin{equation}
 |O(v)|=r
 \quad\text{and}\quad
 |I(v)|\leq b
 \quad\text{hold for every }v\in V(G).
 \label{eq:regularisation-flow-in-out-degrees}
\end{equation}
Let $F$ be the spanning subgraph of $G$ with edge-set
$$
 E(F)
 =
 \{vw\in E(G):(v_L,w_R)\in\mathcal M
                \text{ or }(w_L,v_R)\in\mathcal M\}.
$$
Note that
$$
 O(v)
 \subseteq
 \{w\in V(G):vw\in E(F)\}
 \subseteq
 O(v)\cup I(v)
$$
holds for every $v\in V(G)$.  Consequently, relying on
\eqref{eq:regularisation-flow-in-out-degrees}, we obtain
$$
 r=|O(v)|
 \leq
 \deg_F(v)
 \leq
 |O(v)|+|I(v)|
 \leq
 r+b,
$$
proving \eqref{eq:regularisation-bounded-load-conclusion}.
\end{proof}

\medskip
We are now in a position to prove Lemma~\ref{lem:quantitative-degree-regularisation}. 

\begin{proof}[Proof of
Lemma~\ref{lem:quantitative-degree-regularisation}]
Let $S$ and $T$ be defined by~\eqref{eq:exceptional-set}, and apply Lemma~\ref{lem:regularisation-bounded-load} to obtain a spanning subgraph
$F \subseteq G$ satisfying~\eqref{eq:regularisation-bounded-load-conclusion}. Let $G^\circ=G[T]\cup F$. Starting with the stipulated degree bounds, since $F \subseteq G^\circ$, it follows by~\eqref{eq:regularisation-bounded-load-conclusion} that $\deg_{G^\circ}(v)\geq\deg_F(v)\geq r$ holds for every $v \in V(G)$, thus proving the lower bound in
\eqref{eq:quantitative-regularisation-degree-bounds}.  If $v\in T$,
then the definition of $T$ implies that
\begin{equation}
 \deg_{G^\circ}(v)\leq\deg_G(v)\leq Ld.
 \label{eq:regularisation-ordinary-vertex-degree}
\end{equation}
Otherwise, $v\in S$ and thus
\begin{equation}
 \deg_{G^\circ}(v)
 =
 \deg_F(v)
 \leq
 r+\left\lceil\frac r\kappa\right\rceil.
 \label{eq:regularisation-exceptional-vertex-degree}
\end{equation}
Combining \eqref{eq:regularisation-ordinary-vertex-degree} and
\eqref{eq:regularisation-exceptional-vertex-degree} proves the upper
bound appearing in
\eqref{eq:quantitative-regularisation-degree-bounds}.

Proceeding to the bijumbledness of $G^\circ$, since $G^\circ$ is a
spanning subgraph of $G$ containing $G[T]$, it follows by Lemma~\ref{lem:exceptional-support}, applied with
$H=G^\circ$, that $G^\circ$ is
$$
 \left(
  p,
  \left(3+\frac{2}{L-1}\right)\beta
 \right)\text{-bijumbled}.
$$
This proves \eqref{eq:quantitative-regularisation-jumbledness} and
concludes the proof of Lemma~\ref{lem:quantitative-degree-regularisation}.
\end{proof}

\subsection{Pseudorandomness degradation post percolation: proof of Lemma~\ref{lem:percolation}}\label{sec:percolation}

Gearing up towards a proof of Lemma~\ref{lem:percolation}, we recall that the operator norm of a real $d_1\times d_2$ matrix $N$ is
given by
$$
 \|N\|_{\mathrm{op}}
 :=
 \sup_{x\in\mathbb R^{d_2}\setminus\{0\}}
 \frac{\|Nx\|_2}{\|x\|_2}.
$$
Facilitating our proof of Lemma~\ref{lem:percolation} is the following
upper-tail estimate for the operator norm of certain random matrices,
established by Bandeira and van Handel
\cite[Corollary~3.12 and Remark~3.13]{BandeiraVanHandel2016}.

\begin{theorem}[Bandeira--van Handel
  \cite{BandeiraVanHandel2016}]
\label{thm:bvh-percolation}
Let $K_0>0$ and let $N=(N_{ij})$ be a real
$d_1\times d_2$ random matrix whose entries are mutually
independent, centred, and satisfy $|N_{ij}| \leq 1$ asymptotically almost surely.
Define
\begin{equation}
 \varsigma_1^2
 :=
 \max_{1\leq i\leq d_1}
 \sum_{j=1}^{d_2}\mathbb E\!\left[N_{ij}^2\right]
 \qquad\text{and}\qquad
 \varsigma_2^2
 :=
 \max_{1\leq j\leq d_2}
 \sum_{i=1}^{d_1}\mathbb E\!\left[N_{ij}^2\right].
 \label{eq:bvh-variance-parameters}
\end{equation}
Then, there exists a constant $C_{K_0}>0$, depending only on
$K_0$, such that
\begin{equation}
 \Pr\!\left[
  \|N\|_{\mathrm{op}}
  \leq
  C_{K_0}\left(
   \varsigma_1+\varsigma_2+
   \sqrt{\log(d_1+d_2)}
  \right)
 \right]
 \geq
 1-(d_1+d_2)^{-K_0}.
 \label{eq:bvh-tail}
\end{equation}
\end{theorem}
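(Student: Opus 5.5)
Since this is the cited result of Bandeira and van Handel, the plan is to reduce it to their symmetric-matrix norm bound and add a standard concentration step. I read the hypothesis $|N_{ij}|\leq 1$ as holding almost surely. In our application, $N$ is the centred percolation matrix, whose entries are bounded by $1$ deterministically, so this reading suffices for our needs.

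First, pass to the Hermitian dilation. Set $d=d_1+d_2$ and let
$$
 X=\begin{pmatrix}0&N\\ N^{\top}&0\end{pmatrix},
$$
a $d\times d$ symmetric matrix. Its entries on and above the diagonal are independent and centred, and $\|X\|_{\mathrm{op}}=\|N\|_{\mathrm{op}}$. Each row of $X$ is a row or a column of $N$, padded with zeros. Hence the maximal row variance $\sigma^2:=\max_i\sum_j\mathbb E[X_{ij}^2]$ equals $\max\{\varsigma_1^2,\varsigma_2^2\}$, and $\sigma_*:=\max_{ij}\|X_{ij}\|_\infty\leq 1$.

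Second, bound the expectation. Invoke the main estimate of~\cite{BandeiraVanHandel2016}: for symmetric matrices with independent centred bounded entries,
$$
 \mathbb E\|X\|_{\mathrm{op}}\leq (1+\epsilon)\,2\sigma + C_\epsilon\,\sigma_*\sqrt{\log d}.
$$
Taking $\epsilon=1$ yields
$$
 \mathbb E\|N\|_{\mathrm{op}}\leq C\bigl(\varsigma_1+\varsigma_2+\sqrt{\log d}\bigr)
$$
for an absolute constant $C$. This step is the genuine content and the main obstacle. Its proof in~\cite{BandeiraVanHandel2016} goes through a moment method: one bounds $\mathbb E\operatorname{tr}X^{2p}$ for $p\asymp\log d$ by comparing with a Gaussian matrix with a structured variance pattern, and then controls the walk-counting combinatorics. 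I would cite it rather than reprove it.

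Third, upgrade to a tail bound. The map $N\mapsto\|N\|_{\mathrm{op}}$ is convex and $1$-Lipschitz with respect to the Frobenius norm. The entries are independent and lie in $[-1,1]$. Talagrand's concentration inequality for convex Lipschitz functions therefore gives
$$
 \Pr\bigl[\|N\|_{\mathrm{op}}\geq\mathbb E\|N\|_{\mathrm{op}}+t\bigr]\leq C'e^{-t^2/C'}.
$$
Choose $t=\sqrt{C'(K_0+1)\log d}$. This makes the failure probability at most $d^{-K_0}$, absorbing the constant $C'$ for $d\geq 2$, while adding only $O_{K_0}(\sqrt{\log d})$ to the bound. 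Combining this with the second step and enlarging the constant to $C_{K_0}$ yields~\eqref{eq:bvh-tail}.

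Alternatively, Corollary~3.12 and Remark~3.13 of~\cite{BandeiraVanHandel2016} already package the second and third steps into exactly this tail form, so the proof amounts to the dilation in the first step plus that citation.
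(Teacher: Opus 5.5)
Your proposal is correct and takes the same approach as the paper. The paper gives no argument beyond citing Corollary~3.12 and Remark~3.13 of~\cite{BandeiraVanHandel2016}; your dilation, expectation bound and Talagrand step are a faithful unpacking of that citation, and reading $|N_{ij}|\leq 1$ as an almost-sure bound is the correct interpretation. The only slip is cosmetic: with $t=\sqrt{C'(K_0+1)\log d}$ the prefactor $C'$ is absorbed only once $d\geq C'$, so take $t=\sqrt{C'(K_0+1+\log_2 C')\log d}$ instead, which works for all $d\geq 2$ and still adds only $O_{K_0}(\sqrt{\log d})$.
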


We proceed to a proof of Lemma~\ref{lem:percolation}.

\begin{proof}[Proof of Lemma~\ref{lem:percolation}]
For every edge $e\in E(F)$, let $\xi_e$ be the indicator random
variable for the event $e\in E(F_\rho)$.  Then,
$\{\xi_e:e\in E(F)\}$ are i.i.d.\ Bernoulli random variables with
mean $\rho$.  Our proof consists of the following four steps.

\medskip
\noindent
\textbf{Step 1. Capturing all cut-discrepancies through an operator norm.}
Define the symmetric $n\times n$ matrix $M$ by setting
$$
 M_{uv}
 =
 \begin{cases}
  \xi_{uv} - \rho, & uv \in E(F) \\
  0, & uv\notin E(F) 
 \end{cases}
$$
for every (not necessarily distinct) $u,v\in V(F)$.  For arbitrary
sets $X,Y\subseteq V(F)$ we may write
$$
 \ind_X^{\mathsf T}M\ind_Y
 =
 \evec_{F_\rho}(X,Y)-\rho\evec_F(X,Y).
$$
It then follows by the triangle inequality that
\begin{align}
 \left|
  \evec_{F_\rho}(X,Y)-\rho p|X||Y|
 \right|
 &\leq
 \left|\ind_X^{\mathsf T}M\ind_Y\right|
 +
 \rho\left|
  \evec_F(X,Y)-p|X||Y|
 \right|
 \notag\\
 &\leq
 \left(\|M\|_{\mathrm{op}}+\rho\beta\right)
 \sqrt{|X||Y|},
 \label{eq:percolation-op-reduction}
\end{align}
where the last inequality holds by
$$
 \left|\ind_X^{\mathsf T}N\ind_Y\right|
 \leq
 \|N\|_{\mathrm{op}}
 \|\ind_X\|_2\|\ind_Y\|_2
 =
 \|N\|_{\mathrm{op}}\sqrt{|X||Y|}
$$
and by the $(p,\beta)$-bijumbledness of $F$.  Hence, it suffices to
prove that the inequality
\begin{equation}
 \|M\|_{\mathrm{op}}
 \leq
 C_{K_0}\left(
  \sqrt{\rho\Delta(F)}+\sqrt{\log n}
 \right)
 \label{eq:percolation-op-target}
\end{equation}
holds with probability at least $1-(2n)^{-K_0}$.

\medskip
\noindent
\textbf{Step 2. Gaining entry-wise independence.}
Fix an arbitrary ordering $v_1,\ldots,v_n$ of $V(F)$ and define
the $n\times n$ random matrix $R=(R_{ij})$ by
$$
 R_{ij}
 =
 \begin{cases}
  \xi_{v_iv_j}-\rho,
    & i<j \textrm{ and } v_i v_j \in E(F) \\
  0, & \textrm{otherwise} 
 \end{cases}
$$
so that $R$ is upper-triangular.  Each nonconstant entry of $R$
corresponds to a different edge of $F$.  Hence, all entries of $R$,
including its deterministic zero entries, are mutually independent.
These are also centred and bounded in absolute value by one.  Finally,
observe that
\begin{equation}
 M=R+R^{\mathsf T}
 \qquad\text{and therefore}\qquad
 \|M\|_{\mathrm{op}}
 \leq
 \|R\|_{\mathrm{op}}+\|R^{\mathsf T}\|_{\mathrm{op}}
 =
 2\|R\|_{\mathrm{op}}.
 \label{eq:M-R}
\end{equation}

\medskip
\noindent
\textbf{Step 3. Estimating variance parameters.}
If $i<j$ and $v_iv_j\in E(F)$, then
$$
 \mathbb E\!\left[R_{ij}^2\right]
 =
 \operatorname{Var}(\xi_{v_iv_j})
 =
 \rho(1-\rho)
 \leq\rho;
$$
the second moment of any other entry of $R$ is clearly zero.  Hence,
for every row $i$, it holds that
$$
 \sum_{j=1}^n\mathbb E\!\left[R_{ij}^2\right]
 \leq
 \rho\bigl|\{j>i:v_iv_j\in E(F)\}\bigr|
 \leq
 \rho\deg_F(v_i)
 \leq
 \rho\Delta(F).
$$
Similarly, for every column $j$, it holds that
$$
 \sum_{i=1}^n\mathbb E\!\left[R_{ij}^2\right]
 \leq
 \rho\bigl|\{i<j:v_iv_j\in E(F)\}\bigr|
 \leq
 \rho\deg_F(v_j)
 \leq
 \rho\Delta(F).
$$
Hence, the parameters appearing in
\eqref{eq:bvh-variance-parameters} satisfy
$$
 \max\{\varsigma_1,\varsigma_2\}
 \leq
 \sqrt{\rho\Delta(F)}.
$$

\medskip
\noindent
\textbf{Step 4. Conclusion.}
Applying Theorem~\ref{thm:bvh-percolation} to $R$ with $d_1=d_2=n$
yields that
\begin{equation}
 \|R\|_{\mathrm{op}}
 \leq
 C_{K_0}'\left(
  2\sqrt{\rho\Delta(F)}+\sqrt{\log(2n)}
 \right)
 \label{eq:percolation-R-tail}
\end{equation}
holds with probability at least $1-(2n)^{-K_0}$, where
$C_{K_0}'>0$ is some constant depending solely on $K_0$.

Combining \eqref{eq:M-R} and \eqref{eq:percolation-R-tail} then yields
that with probability at least $1-(2n)^{-K_0}$,
$$
 \|M\|_{\mathrm{op}}
 \leq
 2C_{K_0}'\left(
  2\sqrt{\rho\Delta(F)}+\sqrt{\log(2n)}
 \right)
 \leq
 C_{K_0}\left(
  \sqrt{\rho\Delta(F)}+\sqrt{\log n}
 \right),
$$
for an appropriate constant $C_{K_0}$.  This entails
\eqref{eq:percolation-op-target} and concludes the proof of the lemma.
\end{proof}

\section{Rainbow factors in uniformly coloured pseudorandom graphs}
\label{sec:rainbow-factors}

In this section we prove Theorem~\ref{thm:rainbow-factors}. This result has two branches, distinguished by $k \geq 3$ and $k=2$. To prove the former, we apply the palette-transference principle (Theorem~\ref{thm:transfer}) with the result of Morris~\cite{Morris2025}, namely Theorem~\ref{thm:morris}, serving as its deterministic black-box. In the remaining case $k=2$, corresponding to a perfect matching, we first pass to 
a balanced bipartite core, then apply the palette-transference principle to this core, with the deterministic black-box being a stand-alone verification of Hall's condition in an appropriate bipartite graph. This separated treatment of the two branches is necessary since the result of Morris does not cover the case $k=2$.

\subsection{Rainbow \texorpdfstring{$\mathbf{K_k}$-factors, $\mathbf{k \geq 3}$}{Kk-factors, k at least 3}}

\begin{proof}[Proof of Theorem~\ref{thm:rainbow-factors} for $k \geq 3$]
Fix $k \geq 3$, suppose that $k \mid n$, and set
$$
 \rho_\star = \frac{2\eps}{k-1+2\eps}.
$$
Aiming to apply Theorem~\ref{thm:transfer} with $a=1$, $K_0 = 2$, and $\alpha$ as per Theorem~\ref{thm:rainbow-factors}, let $c_1^\circ, c_2^\circ, d_\star$, and $C_2$ denote the resulting constants. Set $\zeta = c_1^\circ/2$, and let
$\eta = \eta(k,\zeta)>0$ be the constant appearing in the statement of  Theorem~\ref{thm:morris}. Choose the constant $c$ appearing in the statement of Theorem~\ref{thm:rainbow-factors} sufficiently small so as to satisfy  $c \leq 1$ as well as 
\begin{align} \label{eq:factor-c-one}
 \max \left\{\frac{5c}{\rho_\star^{k-2}}, 
 \frac{2C_2(\sqrt{c_2^\circ}+1)c}{\sqrt\alpha\,\rho_\star^{k-1}} \right\}
 \leq \frac{\eta}{2}.
\end{align}

Let $\cA$ be the family of edge-sets of all $K_k$-factors in $G$; note that $\cA$ is an $h$-edge configuration family on $V(G)$, where $h = \frac{k-1}{2} n$. Continuing our preparations for an application of Theorem~\ref{thm:transfer}, set $S = \cK$ which in turn renders its retention probability to be
$$
 \rho
 =
 \frac{Q-\frac{k-1}{2}n+1}{Q}
 \geq
 \rho_\star.
$$

With Theorem~\ref{thm:morris} serving as the deterministic black-box of the intended application of Theorem~\ref{thm:transfer}, set 
\begin{equation}
 b_\star = \eta(\rho p)^{k-1}n.
 \label{eq:factor-target-beta}
\end{equation}
We prove that $\widehat \beta \leq b_\star$, where $\widehat \beta$ is the degraded bijumbledness parameter appearing in Theorem~\ref{thm:transfer}. Note first that
\begin{equation}
 5\rho\beta
 \leq
 \frac{5c}{\rho^{k-2}}
 (\rho p)^{k-1}n
 \leq
 \frac{\eta}{2} (\rho p)^{k-1}n = b_\star/2
 \label{eq:factor-main-error}
\end{equation}
holds by the premise of Theorem~\ref{thm:rainbow-factors}, by~\eqref{eq:factor-c-one}, and by~\eqref{eq:factor-target-beta}. It thus remains to prove that 
\begin{align} \label{eq::secondSummandInBeta}
C_2 \left(\sqrt{\rho c_2^\circ p n} + \sqrt{\log n} \right) \leq b_\star/2.
\end{align}
Suppose first that $p > 1/2$. Then, $b_{\star} = \eta (\rho p)^{k-1} n = \Omega(n)$, whereas
$$
 \sqrt{\rho c_2^\circ pn} + \sqrt{\log n} = O(\sqrt n), 
$$
implying~\eqref{eq::secondSummandInBeta}. Suppose then that $p \leq 1/2$. Observation~\ref{obs:density-floor} coupled with the assumed upper bound on $\beta$ imposed in the premise of Theorem~\ref{thm:rainbow-factors} imply 
$$
 \frac{\sqrt\alpha}{2} \sqrt{pn} \leq c p^{k-1} n.
$$
Consequently,
\begin{equation}
 \sqrt{pn}
 \leq
 \frac{2c}{\sqrt\alpha\,\rho_\star^{k-1}}
 (\rho p)^{k-1}n.
 \label{eq:factor-sqrt-bound}
\end{equation}
The density consequence following Observation~\ref{obs:density-floor} shows that $pn/\log n \to \infty$; in particular, $\sqrt{\log n} \leq \sqrt{pn}$ holds for all sufficiently large $n$. Equations~\eqref{eq:factor-c-one} and~\eqref{eq:factor-sqrt-bound} then yield
$$
 C_2 \left(\sqrt{\rho c_2^\circ pn} + \sqrt{\log n}\right)
 \leq \frac{\eta}{2} (\rho p)^{k-1} n = b_{\star}/2,
$$
establishing~\eqref{eq::secondSummandInBeta}.


Every graph $J$, targeted by the deterministic black-box of the intended  application of Theorem~\ref{thm:transfer} is $(\rho p,b_\star)$-bijumbled and it satisfies
$$
 \delta(J) \geq
 \frac{c_1^\circ}{2} \rho p n
 = \zeta \rho p n.
$$
By Theorem~\ref{thm:morris}, any such graph $J$ admits a 
$K_k$-factor. Therefore, all conditions listed in Theorem~\ref{thm:transfer} are met. Finally, $\rho pn/\log n \to \infty$, and thus the failure probability stated in~\eqref{eq:transfer-conclusion} is $o(1)$. We conclude that an application of Theorem~\ref{thm:transfer} with the stated parameters a.a.s.\  yields a rainbow $K_k$-factor in $G$.
\end{proof}

\subsection{Rainbow perfect matchings}
\label{sec:perfect-matchings} 

In this section, we prove the branch of Theorem~\ref{thm:rainbow-factors} pertaining to perfect matchings. The first step in this proof is to apply Lemma~\ref{lem:balanced-cut} (stated below) to the $(p,\beta)$-bijumbled host and consequently pass to a $(p,\beta)$-bijumbled spanning balanced bipartite graph.  

\begin{lemma}
\label{lem:balanced-cut}
Let $G$ be a graph on an even number $n$ of vertices.  If
\begin{equation} \label{eq::probBound41}
 n \cdot \exp\left(-\frac{\delta(G)}{36}\right) < 1,
\end{equation}
then there exists an equipartition $V(G) = U \mathbin{\dot\cup} W$ such that $\deg_G(u, W) \geq \deg_G(u)/3$ for every $u \in U$ and $\deg_G(w, U) \geq \deg_G(w)/3$ for every $w \in W$. 
\end{lemma}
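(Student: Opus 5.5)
We take a uniformly random equipartition and bound, via a union bound, the probability that some vertex has too few neighbours across. The condition \eqref{eq::probBound41} is tailored to make this union bound strictly less than one. Concretely, choose $U$ uniformly at random among all subsets of $V(G)$ of size $n/2$, and set $W = V(G)\setminus U$.

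Fix a vertex $v$ and write $D = \deg_G(v)$. Conditioned on the side containing $v$, the other side is a uniformly random set of size $n/2$ drawn from the remaining $n-1$ vertices. Hence the number $X_v$ of neighbours of $v$ on the opposite side is hypergeometric, with mean
$$
\mathbb E[X_v] = D\cdot\frac{n/2}{n-1} \geq \frac D2.
$$
The failure event is $X_v < D/3$, a deviation of at least $D/6$ below the mean. Chernoff bounds apply to hypergeometric variables as well (Hoeffding's comparison with the binomial), so we may use
$$
\Pr[X \leq \mu - t]\leq \exp\left(-\frac{t^2}{2\mu}\right).
$$
With $\mu\ge D/2$ and relative deviation $\delta = 1/3$, this gives
$$
\Pr\left[X_v<\frac D3\right] \leq \exp\left(-\frac{\delta^2\mu}{2}\right) \leq \exp\left(-\frac{D}{36}\right) \leq \exp\left(-\frac{\delta(G)}{36}\right).
$$
One should check that $\mu$ slightly exceeding $D/2$ only helps: the deviation $\mu - D/3$ grows with $\mu$, and $t^2/(2\mu)$ is increasing in $\mu$ for $t=\mu-D/3\ge \mu/3$.

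A union bound over the $n$ vertices bounds the probability that some vertex fails by $n\exp(-\delta(G)/36)$. By \eqref{eq::probBound41} this is less than $1$, so some equipartition has no failing vertex. For that equipartition, every $u\in U$ satisfies $\deg_G(u,W)\ge \deg_G(u)/3$, and symmetrically for $w\in W$.

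The only delicate point is the concentration step: justifying the Chernoff bound for the hypergeometric distribution, and keeping the constant at exactly $36$. If matching the constant directly proves awkward, a fallback is to couple with an independent half-split, placing each vertex on a side with probability $1/2$, and then correct the imbalance. This costs more bookkeeping, so the hypergeometric route via Hoeffding's theorem is the intended one.
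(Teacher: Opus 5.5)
Your proposal is correct and is essentially the paper's own proof: a uniformly random equipartition, the hypergeometric count $X_v$ with mean $\frac{n}{2(n-1)}\deg_G(v)\geq \deg_G(v)/2$, Hoeffding's lower-tail bound giving $\exp(-\deg_G(v)/36)$, and a union bound against the hypothesis on $\delta(G)$. Your explicit check that $t^2/(2\mu)\geq D/36$ with $t=\mu-D/3$ and $\mu\geq D/2$ supplies the constant computation the paper leaves implicit, so the fallback via an independent split is not needed.
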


The proof of Lemma~\ref{lem:balanced-cut} is postponed until Appendix~\ref{app:balanced-cut}.



The next lemma, whose proof is also delegated to Appendix~\ref{app:deterministic-hall}, serves as the deterministic black-box for the intended application of Theorem~\ref{thm:transfer}. Its origins can be traced back  to~\cite{KrivelevichSudakov2006}.


\begin{lemma}
\label{lem:deterministic-hall}
For every $\lambda>0$, there exists $\zeta := \zeta(\lambda) > 0$ such that the following holds. Let $J = (U,W;E)$ be a bipartite graph satisfying $|U| = |W| = m$, and let $D = p m$.
If $\delta(J) \geq \lambda D$ and $J$ is $(p,\beta)$-bijumbled for $\beta \leq \zeta D$, then $J$ admits a perfect matching.
\end{lemma}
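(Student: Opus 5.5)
We verify Hall's condition for $J$: for every $X\subseteq U$ we show $|\Gamma_J(X)|\geq|X|$ (with $\Gamma_J(X)\subseteq W$ the neighbourhood). The symmetric statement for subsets of $W$ is not needed, since $|U|=|W|$. Fix $X\subseteq U$ with $x:=|X|\geq 1$ and put $Y:=\Gamma_J(X)$, $y:=|Y|$. Suppose, towards a contradiction, that $y<x$. Every edge leaving $X$ lands in $Y$, so by the minimum degree assumption
$$
 e_J(X,Y)\geq \lambda D x .
$$
Bijumbledness gives the upper bound
$$
 e_J(X,Y)\leq p x y+\beta\sqrt{xy}\leq \frac{D}{m}xy+\zeta D\sqrt{xy}.
$$
We split into cases according to the size of $x$. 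The threshold is a small constant $\theta=\theta(\lambda)$, say $\theta=\lambda/4$.

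\emph{Small sets, $x\leq \theta m$.} Since $y<x$, we get $\frac{D}{m}xy\leq \theta D x$ and $\zeta D\sqrt{xy}\leq \zeta D x$. Hence $\lambda D x\leq (\theta+\zeta)Dx$, which is a contradiction once $\theta+\zeta<\lambda$.

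\emph{Medium sets, $\theta m<x\leq m/2$.} Here we use the second degree bound more carefully. Since $y<x$, we have $pxy\leq \frac{D}{m} x\cdot y$, and we need $y$ to be genuinely small. The plan is to reason as follows. The complement $W\setminus Y$ has size $m-y>m-x\geq m/2$ and receives no edges from $X$, so $e_J(X,W\setminus Y)=0$. Bijumbledness applied to $X$ and $W\setminus Y$ gives
$$
 p x(m-y)\leq \beta\sqrt{x(m-y)},
$$
that is, $\sqrt{x(m-y)}\leq \zeta m$. But $\sqrt{x(m-y)}\geq \sqrt{\theta m\cdot m/2}$, so this fails whenever $\zeta<\sqrt{\theta/2}$. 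This disposes of all $x\geq\theta m$ with $y\leq m/2$, and in particular of every $x\leq m/2$ in this range.

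\emph{Large sets, $x>m/2$.} If $y<x\leq m$, consider $W':=W\setminus Y$, which is nonempty. Every $w\in W'$ has all of its at least $\lambda D$ neighbours in $U\setminus X$, a set of size $m-x<m/2$. We apply the two previous cases with the roles of the parts swapped, to the set $W'$, whose neighbourhood lies in $U\setminus X$. Since $|U\setminus X|=m-x<m-y=|W'|$, the set $W'$ violates Hall's condition on the $W$ side. Moreover $|W'|=m-y$, and we may shrink $W'$ to a subset $W''$ of size $\min\{m-y,\lfloor m/2\rfloor\}$ still violating Hall's condition: such a $W''$ has $|\Gamma(W'')|\leq m-x<m/2$, and we compare against $|W''|$. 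Two subcases arise. If $m-x<\theta m$ while $|W''|\geq$ its neighbourhood, the small-set argument contradicts (because $e(W'',U\setminus X)\geq \lambda D|W''|$ while the bijumbled bound is at most $(\theta+\zeta)D|W''|$). Otherwise the medium-set complement argument applies to $W''$. Both cases are symmetric, since the hypotheses on $J$ are symmetric in $U$ and $W$.

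The main obstacle is the bookkeeping in the large case, where the complement reduction has to land in a regime already handled. The cleanest way to run this is a standard formulation: Hall fails in a balanced bipartite graph if and only if there is $X\subseteq U$ with $|\Gamma(X)|<|X|$ and $|X|\leq m/2$, or $Z\subseteq W$ with the same property. With this reduction only the first two cases are needed. I would state and use that reduction first: given a violator $X$ with $|X|>m/2$, take $Z=W\setminus\Gamma(X)$; then $\Gamma(Z)\subseteq U\setminus X$, so $|\Gamma(Z)|\leq m-|X|<m-|\Gamma(X)|=|Z|$ and $|Z|$ is at least as large. The remaining care is to make sure one of $X$, $Z$ has size at most $m/2$, or otherwise handle it via the complement-emptiness estimate above. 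Finally, choose $\zeta=\min\{\lambda/2,\sqrt{\lambda/8}\}/2$.
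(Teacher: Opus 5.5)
Your proposal is correct and follows essentially the paper's route. Small violators are excluded by comparing $e_J(X,\Gamma_J(X))\geq\lambda D|X|$ with the bijumbled upper bound. Mid-sized ones are excluded by applying bijumbledness to the linear-sized pair $(X,W\setminus\Gamma_J(X))$, which spans no edges. Large ones are handled by passing to the complementary violator $W\setminus\Gamma_J(X)$ on the other side, which the paper packages as the standard reduction to $|A|=t$, $|B|=t-1$ with $t\leq(m+1)/2$.

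One correction: the reduction in your last paragraph, asserting that a violator of size at most $m/2$ always exists, is false for odd $m$; the right threshold is $(m+1)/2$, as in the paper. This does no harm, because your mid-size estimate only uses $|X|>\theta m$ and $|\Gamma_J(X)|\leq m/2$, so it already covers the boundary case $|X|=(m+1)/2$.
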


We are now ready to prove the $k=2$ branch of Theorem~\ref{thm:rainbow-factors}. 

\begin{proof}[Proof of Theorem~\ref{thm:rainbow-factors} for $k=2$]
Set $d = p n$, where $n$ is assumed to be even.

Aiming to apply Theorem~\ref{thm:transfer} with $a=1$, $K_0 = 2$, and $\alpha$ as per Theorem~\ref{thm:rainbow-factors}, let  $c_1^\circ, c_2^\circ, d_\star$, and $C_2$ denote the resulting constants. Set $\lambda = c_1^\circ/3$, and let $\zeta := \zeta(\lambda)>0$ be the constant appearing in the statement of Lemma~\ref{lem:deterministic-hall}. Choose the constant $c$ appearing in the statement of Theorem~\ref{thm:rainbow-factors} such that
\begin{equation}
 c \leq \min \left\{1, \frac{\zeta}{20}\right\}.
 \label{eq:matching-choice-c}
\end{equation}

Let $\cA$ be the family of edge-sets of all perfect matchings of $G$;
note that $\cA$ is an $h$-edge configuration family on $V(G)$, where $h = n/2$. Continuing our preparations for an application of Theorem~\ref{thm:transfer}, set $S = \cK$ which in turn renders its retention probability to be
\begin{equation}
 \rho = \frac{Q- n/2 + 1}{Q} \geq \frac{2\eps}{1+2\eps} > 0. 
 \label{eq:matching-rho}
\end{equation}

With Lemma~\ref{lem:deterministic-hall} serving as the deterministic black-box of the intended application of Theorem~\ref{thm:transfer}, 
set 
\begin{equation}
 b_\star = \frac{\zeta}{2} \rho d.
 \label{eq:matching-target-beta}
\end{equation}
We prove that $\widehat \beta \leq b_\star$, where $\widehat \beta$ is the degraded bijumbledness parameter appearing in Theorem~\ref{thm:transfer}. Note first that
\begin{equation}
 5\rho\beta
 \leq
 \frac\zeta4\rho d =
 \frac{b_\star}{2}
 \label{eq:matching-main-error}
\end{equation}
holds by~\eqref{eq:matching-choice-c} and by~\eqref{eq:matching-target-beta}, and since $\beta \leq c d$ by the premise of Theorem~\ref{thm:rainbow-factors}.

Moreover, $d/\log n \to \infty$ and $\rho$ being a positive constant jointly imply that
$$
 C_2\left(
  \sqrt{\rho c_2^\circ d}+\sqrt{\log n}
 \right)
 = o(\rho d).
$$
Consequently, for all sufficiently large $n$,
\begin{equation}
 C_2\left(
  \sqrt{\rho c_2^\circ d}+\sqrt{\log n}
 \right)
 \leq
 \frac{\zeta}{4} \rho d
 =
 \frac{b_\star}{2}
 \label{eq:matching-sampling-error}
\end{equation}
holds for sufficiently large $n$. Equations~\eqref{eq:matching-main-error} and~\eqref{eq:matching-sampling-error} then confirm our claim that 
$\widehat\beta \leq b_\star$.

Let $J$ be any $(\rho p, b_\star)$-bijumbled graph with vertex-set $V(G)$,  satisfying 
\begin{equation}
 \delta(J)\geq\frac{c_1^\circ}{2} \rho d.
 \label{eq:matching-comparison-degree}
\end{equation}
It follows by~\eqref{eq:matching-comparison-degree}, and since $d/\log n \to \infty$, that
$$
 n\cdot \exp\left(-\frac{\delta(J)}{36}\right)
 \leq
 n\cdot \exp\left(-\frac{c_1^\circ\rho d}{72}\right)
 =o(1).
$$
Therefore, by Lemma~\ref{lem:balanced-cut} there exists a partition $V(J) = U \mathbin{\dot\cup} W$ such that $|U| = |W| = n/2$ and every vertex retains at least one third of its degree across the cut. Set
$$
 B = J[U,W],
 \qquad
 m = \frac{n}{2},
 \qquad \textrm{and} \qquad
 D = \rho p m = \frac{\rho d}{2}.
$$
Observe that $B$ is $(\rho p, b_\star)$-bijumbled (in the bipartite sense). Furthermore,
$$
 \delta(B) \geq \frac{1}{3} \delta(J) \geq \frac{c_1^\circ}{6} \rho d
 = \lambda D
$$
holds by~\eqref{eq:matching-comparison-degree}.

It then follows by~\eqref{eq:matching-target-beta} and by Lemma~\ref{lem:deterministic-hall} that $B$, and thus also $J$, admits a perfect matching.

Finally, $d \geq d_\star$ holds for sufficiently large $n$, and
$c \leq 1$ ensures that $\beta \leq d$. Hence, all the conditions of Theorem~\ref{thm:transfer} hold. Moreover, $\rho d/\log n \to \infty$, and thus the failure probability stated in~\eqref{eq:transfer-conclusion} is $o(1)$. We conclude that an application of Theorem~\ref{thm:transfer} with the stated parameters a.a.s.\  yields a rainbow perfect matching in $G$. 
\end{proof}

\section{Rainbow spanning trees in uniformly coloured pseudorandom graphs}
\label{sec:rainbow-trees}

\begin{proof}[Proof of Theorem~\ref{thm:rainbow-trees}]
Fix $\Delta \geq 2$, and set
$$
 L_n = \Delta^{5\sqrt{\log n}}
 \qquad \textrm{and} \qquad
 \rho_\star = \frac{\eps}{1+\eps}.
$$
Aiming to apply Theorem~\ref{thm:transfer} with $a=1$, $K_0 = 2$, and $\alpha$ as per Theorem~\ref{thm:rainbow-trees}, let  $c_1^\circ, c_2^\circ, d_\star$, and $C_2$ denote the resulting constants. Choose $c>0$ sufficiently
small so that 
\begin{equation}
 c \leq 1/10
 \qquad \textrm{and} \qquad
 \frac{C_2(\sqrt{c_2^\circ}+1)c}{2\sqrt\alpha}
 \leq
 \frac{\rho_\star}{8}
 \label{eq:tree-c-choice}
\end{equation}
all hold. 

For an arbitrary prescribed tree $T$ with vertex-set $V(G)$ let $\cA_T$ be the family of edge-sets of all copies of $T$ in $G$; note that $\cA_T$ is an $h$-edge configuration family on $V(G)$, where $h = n-1$. Continuing our preparations for an application of Theorem~\ref{thm:transfer}, set $S = \cK$ which in turn renders its retention probability to be
\begin{equation}
 \rho = \frac{Q-n+2}{Q} \geq \rho_\star.
 \label{eq:tree-rho}
\end{equation}
With Theorem~\ref{thm:han-yang} serving as the deterministic black-box of the intended application of Theorem~\ref{thm:transfer}, set
\begin{equation}
 b_\star = \frac{\rho pn}{4L_n}.
 \label{eq:tree-target-beta}
\end{equation}

We prove that $\widehat \beta \leq b_\star$, where $\widehat \beta$ is the degraded bijumbledness parameter appearing in Theorem~\ref{thm:transfer}. Note first that
$$
 5 \rho \beta \leq 5 c \frac{\rho pn}{4L_n} \leq \frac{b_\star}{2},
$$
where the last inequality holds by~\eqref{eq:tree-c-choice}.

Next, if $p > 1/2$, then
$$
 \sqrt{pn} + \sqrt{\log n} = o\!\left(b_{\star}\right).
$$
Suppose then that $p \leq 1/2$. Observation~\ref{obs:density-floor} coupled with the upper bound on $\beta$ appearing in the statement of Theorem~\ref{thm:rainbow-trees} yield 
\begin{equation}
 \sqrt{pn}
 \leq
 \frac{c}{2\sqrt\alpha}\frac{pn}{L_n}.
 \label{eq:tree-sqrt-bound}
\end{equation}
This then forces $p n/\log n \to \infty$ implying that
$\sqrt{\log n}\leq\sqrt{pn}$ holds for all sufficiently large $n$.
Owing to~\eqref{eq:tree-c-choice}, ~\eqref{eq:tree-rho}, and~\eqref{eq:tree-sqrt-bound}, we conclude that
$$
 C_2\left(
  \sqrt{\rho c_2^\circ pn}+\sqrt{\log n}
 \right)
 \leq b_\star/2,
$$
verifying our claim that $\widehat\beta \leq b_\star$.

Every graph $J$, targeted by the deterministic black-box of the intended  application of Theorem~\ref{thm:transfer} is $(\rho p,b_\star)$-bijumbled and it satisfies
$$
 \delta(J) \geq \frac{c_1^\circ}{2} \rho p n \geq \frac{2\rho pn}{\sqrt{L_n}} = 4\sqrt{b_\star(\rho p)n}, 
$$
where the second inequality holds for sufficiently large $n$ since $L_n \to \infty$.   

Recalling~\eqref{eq:tree-target-beta}, it then follows by 
Theorem~\ref{thm:han-yang} that $J$ is $\cT(n,\Delta)$-universal, and in particular, contains a copy of $T$. 

Finally, since $pn/\log n \to \infty$ and $\rho$ is a positive constant, it follows that the failure probability stated in~\eqref{eq:transfer-conclusion} is $o(1)$. We conclude that an application of Theorem~\ref{thm:transfer} with the stated parameters a.a.s.\  yields a rainbow copy of $T$ in $G$. The arbitrariness of $T$ concludes the proof of Theorem~\ref{thm:rainbow-trees}. 
\end{proof}


\section{Rainbow Hamilton cycles in uniformly coloured pseudorandom graphs}
\label{sec:rainbow-hamilton}

In this section, we prove Theorem~\ref{thm:rainbow-hamilton}. We begin by stating the following lemma, proof of which is delegated to Appendix~\ref{app:deterministic-expansion}; it translates bijumbledness and minimum degree conditions into $C$-expansion. This lemma coupled with Theorem~\ref{thm:ham-expander} form the deterministic black-box required by the palette-transference principle in this application. 

\begin{lemma}
\label{lem:deterministic-expansion}
Fix $a > 0$ and an integer $C \geq 2$, and set
\begin{equation}
 \eta =
 \min\left\{
  \frac{a}{4(1+\sqrt C)},
  \frac18\sqrt{\frac{a}{C+1}},
  \frac1{4C}
 \right\}.
 \label{eq:expansion-eta}
\end{equation}
Let $H$ be an $n$-vertex $(q, \gamma)$-bijumbled graph, where $0 < q := q(n) \leq 1$ and $\gamma \leq \eta q n$. If $\delta(H) \geq a q n$, then $H$
is a $C$-expander.
\end{lemma}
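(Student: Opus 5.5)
We verify the two properties of Definition~\ref{def:C-expander} separately; each follows from applying bijumbledness to a suitable pair of sets. Throughout, write $N=qn$, so that $\delta(H)\geq aN$ and $\gamma\leq\eta N$.

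\textbf{Property (b).} Let $X,Y$ be disjoint with $|X|,|Y|\geq n/(2C)$. Bijumbledness gives
$$
e_H(X,Y)\geq q|X||Y|-\gamma\sqrt{|X||Y|}.
$$
This is positive as soon as $\sqrt{|X||Y|}>\gamma/q$. Since $\sqrt{|X||Y|}\geq n/(2C)$, it suffices that $\gamma<qn/(2C)$. This holds because $\eta\leq 1/(4C)$, which is exactly the role of the third term in~\eqref{eq:expansion-eta}.

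\textbf{Property (a).} Fix $X$ with $|X|<n/(2C)$ and set $Y=\Gamma_H(X)$. Suppose, for contradiction, that $|Y|<C|X|$. Every edge leaving a vertex of $X$ lands in $X\cup Y$, so
$$
\evec_H(X,X\cup Y)\geq aN|X|.
$$
On the other hand, bijumbledness with $|X\cup Y|<(C+1)|X|$ yields
$$
\evec_H(X,X\cup Y)\leq q|X|(C+1)|X|+\gamma|X|\sqrt{C+1}.
$$
Dividing by $|X|$, we need to rule out
$$
aN\leq q(C+1)|X|+\gamma\sqrt{C+1}.
$$
We split into two cases according to the size of $X$.

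\emph{Small $X$.} If $q(C+1)|X|\leq aN/2$, i.e.\ $|X|\leq an/(2(C+1))$, the inequality would force $\gamma\sqrt{C+1}\geq aN/2$. This contradicts $\eta\leq a/(4(1+\sqrt C))$, after noting $\sqrt{C+1}\leq 1+\sqrt C$.

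\emph{Large $X$.} Here $an/(2(C+1))<|X|<n/(2C)$, and the crude bound above gives nothing when $a$ is small. We instead use the set $Z=V\setminus(X\cup Y)$, which satisfies $e_H(X,Z)=0$. Since
$$
|Z|\geq n-(C+1)|X|\geq n-\tfrac{C+1}{2C}\,n\geq n/4
$$
(using $C\geq2$), bijumbledness gives
$$
0=e_H(X,Z)\geq q|X||Z|-\gamma\sqrt{|X||Z|}.
$$
Hence $\gamma\geq q\sqrt{|X||Z|}$. Plugging in $|X|>an/(2(C+1))$ and $|Z|\geq n/4$ gives
$$
\gamma> qn\sqrt{\frac{a}{8(C+1)}}.
$$
This contradicts $\eta\leq\tfrac18\sqrt{a/(C+1)}$, the second term of~\eqref{eq:expansion-eta}.

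\textbf{Main obstacle.} The only delicate point is the case split in property~(a). One must check that the thresholds match the three constants in~\eqref{eq:expansion-eta}. In particular, the large-$X$ case must use non-adjacency to the complement rather than the degree count.
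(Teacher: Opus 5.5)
Your proof is correct and follows essentially the same approach as the paper: property (b) is verified identically, and property (a) is split into a small-set case via the degree count and a large-set case via $e_H(X,Z)=0$ with $|Z|\geq n/4$. The only differences are cosmetic. In the small case you apply bijumbledness once to the pair $(X,X\cup\Gamma_H(X))$, with threshold $an/(2(C+1))$, and use $\sqrt{C+1}\leq 1+\sqrt C$. The paper instead pads $\Gamma_H(X)$ to a set $Y$ of size exactly $C|X|$, bounds $\evec_H(X,X)$ and $e_H(X,Y)$ separately, and uses threshold $an/(4(C+1))$. Both versions close with the same three constants in~\eqref{eq:expansion-eta}.
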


\begin{proof}[Proof of Theorem~\ref{thm:rainbow-hamilton}]
Choose an integer $C \geq 2$ that is also sufficiently large for the assertion of Theorem~\ref{thm:ham-expander} to hold, and set
$$
 \rho_\star = \frac{\eps}{1+\eps}.
$$
Aiming to apply Theorem~\ref{thm:transfer} with $a=1$, $K_0 = 2$, and $\alpha$ as per Theorem~\ref{thm:rainbow-hamilton}, let  $c_1^\circ, c_2^\circ, d_\star$, and $C_2$ denote the resulting constants.
Set $a_\circ = c_1^\circ/2$ and let $\eta := \eta(a_\circ, C) > 0$ be the constant defined in~\eqref{eq:expansion-eta}. Choose $c > 0$ such that
\begin{equation}
 c \leq \min\left\{1, \frac{\eta}{10} \right\}.
 \label{eq:ham-c-choice}
\end{equation}
Finally, choose $K$ to be sufficiently large so as to ensure that
\begin{equation}
 \frac{c_1^\circ \rho_\star K}{8} > 2.
 \label{eq:ham-density-choice}
\end{equation}

Let $\cA$ be the family of edge-sets of all Hamilton cycles of $G$; note that $\cA$ is an $h$-edge configuration family on $V(G)$, where $h = n$. Continuing our preparations for an application of Theorem~\ref{thm:transfer}, set $S = \cK$ which in turn renders its retention probability to be
\begin{equation}
 \rho = \frac{Q-n+1}{Q} \geq \rho_\star.
 \label{eq:ham-rho}
\end{equation}
Set
\begin{equation}
 b_\star = \eta \rho p n.
 \label{eq:ham-target-beta}
\end{equation}

We prove that $\widehat \beta \leq b_\star$, where $\widehat \beta$ is the degraded bijumbledness parameter appearing in Theorem~\ref{thm:transfer}. Note first that
$$
 5 \rho \beta \leq 5 c \rho p n \leq \frac{\eta}{2} \rho p n,
$$
where the first inequality holds by the upper bound on $\beta$ appearing in the statement of Theorem~\ref{thm:rainbow-hamilton}, and the second inequality holds by~\eqref{eq:ham-c-choice}.

Moreover, since $\rho \geq \rho_\star$ and $p n \geq K \log n$, it follows that
\begin{align*}
 \frac{C_2\bigl(\sqrt{\rho c_2^\circ pn}+\sqrt{\log n} \bigr)}{\rho pn} \leq
 \frac{C_2\sqrt{c_2^\circ}}{\sqrt{\rho_\star pn}} + \frac{C_2\sqrt{\log n}}{\rho_\star pn} \leq \eta/2,
\end{align*}
where the last inequality holds for sufficiently large $n$. This confirms our claim that $\widehat\beta \leq b_\star$.

Every graph $J$, targeted by the deterministic black-box of the intended  application of Theorem~\ref{thm:transfer} is $(\rho p,b_\star)$-bijumbled and it satisfies
$$
 \delta(J) \geq \frac{c_1^\circ}{2} \rho p n = a_\circ \rho p n.
$$
It then follows by~\eqref{eq:ham-target-beta} and by 
Lemma~\ref{lem:deterministic-expansion} that any such graph $J$ is a
$C$-expander. Therefore, $J$ is Hamiltonian by Theorem~\ref{thm:ham-expander}.

Finally, it follows by~\eqref{eq:ham-density-choice}, \eqref{eq:ham-rho}, and the assumed lower bound $p n \geq K \log n$, that 
$$
 n \cdot \exp\left(-\frac{c_1^\circ\rho pn}{8}\right) = o(1).
$$
Hence, the failure probability stated in~\eqref{eq:transfer-conclusion} is $o(1)$. We conclude that an application of Theorem~\ref{thm:transfer} with the stated parameters a.a.s.\  yields a rainbow Hamilton cycle of $G$. 
\end{proof}

\section{Concluding remarks}
\label{sec:concluding}

We proved a general transference principle for bijumbled graphs whose edges are coloured independently and uniformly at random from a palette whose surplus beyond the size of the configuration sought is $\eps n$, with $\eps >0$ a prescribed constant. The principle converts rainbow embedding results in uniformly coloured bijumbled graphs to their deterministic colourless  embedding counterparts in bijumbled graphs.

In a companion paper~\cite{AHHPTExact}, we address the reduction of the palette surplus. In that work, we show that the coupling lemma, namely Lemma~\ref{lem:coupling}, can be utilised in order to obtain rainbow perfect matchings and rainbow Hamilton cycles in uniformly coloured bijumbled graphs $G$ where the palette surplus is sublinear, and more precisely of order $\log n/p$, provided $\delta(G) = \Omega(pn)$, $pn = \omega(\log n)$, and $\beta = O(pn)$. Rainbow prescribed bounded-degree spanning trees, we can attain with palette surplus $L_{n,\Delta} \log n/p$, but no corresponding result for clique-factors is attained. The proofs of these results in~\cite{AHHPTExact} do not follow any transference principle; in fact, these are highly specialised and only possible due to the fact that perfect matchings, Hamilton cycles, and bounded degree spanning trees are known to require a certain level of expansion to appear. For clique factors, no such links to expansion are known and hence the inability to handle this configuration in this avenue. 

However, we do show in~\cite{AHHPTExact} that if the density of the graph is mildly elevated, then rainbow embeddings of perfect matchings, Hamilton cycles, as well as prescribed bounded-degree spanning trees in uniformly coloured bijumbled graphs with $\beta = O(pn)$ for the former two and $\beta$ as in Theorem~\ref{thm:rainbow-trees} for the latter are possible with no palette surplus at all. For clique factors, our so-called exact palette rainbow embedding result departs from the parameter $\beta$ seen in the result of Morris~\cite{Morris2025} by a polylogarithmic factor. Our exact-palette results in~\cite{AHHPTExact} are proved using a different transference principle established using spread measures.

\section*{AI disclosure} ChatGPT Plus was used for \LaTeX\  support; in particular, it was given the content for all the tables appearing in the manuscript and it was asked to generate the \LaTeX\  code for those. It was also asked to find potentially awkward phrases and formulations in English and to propose alternatives which, on occasion, were adopted.

\appendix

\bibliographystyle{amsplain}
\bibliography{Configurations_Lit}

\section{Proofs omitted from Section~\ref{sec:perfect-matchings} - Perfect matchings}\label{app:perfect-matchings}

\subsection{Proof of Lemma~\ref{lem:balanced-cut}}\label{app:balanced-cut}

Choose $U$ uniformly at random amongst the $n/2$-subsets of $V(G)$, and set
$W := V(G) \setminus U$. Fix any vertex $v \in V(G)$; conditional on the part containing $v$, let $X_v$ denote its degree to the other part. Then, $X_v$ is distributed hypergeometrically with mean
$$
 \mathbb E[X_v] = \frac{n}{2(n-1)}\deg_G(v) \geq \frac{1}{2} \deg_G(v).
$$
The hypergeometric lower-tail bound \cite{Hoeffding1963} then yields 
$$
 \Pr\!\left[X_v<\frac13\deg_G(v)\right] \leq
 \exp\left(-\frac{\deg_G(v)}{36}\right).
$$
A union-bound argument over $V(G)$, coupled with~\eqref{eq::probBound41}, then establishes that the required partition exists, concluding the proof.

\subsection{Proof of Lemma~\ref{lem:deterministic-hall}}\label{app:deterministic-hall}
Set
$$
 \theta = \min\left\{\frac14,\frac\lambda4\right\}
 \qquad \textrm{and} \qquad
 \zeta = \min\left\{
  \frac\lambda4,
  \frac12\sqrt{\frac\theta2}
 \right\}
$$
and suppose for a contradiction that Hall's condition fails for $J$. By potentially interchanging the roles of $U$ and $W$, a standard argument shows that there are sets $A \subseteq U$ and $B \subseteq W$ such that
\begin{equation}
 |A|=t,
 \qquad
 |B|=t-1,
 \qquad \textrm{and} \qquad
 1 \leq t \leq \frac{m+1}{2},
 \label{eq:det-hall-witness}
\end{equation}
and such that $E_J(A, Z) = \varnothing$, where $Z := W \setminus B$. 


If $t \leq \theta m$, then since $J$ is $(p, \beta)$-bijumbled with $\beta \leq \zeta D$ and $\delta(J) \geq \lambda D$, it follows that
\begin{align*}
 e_J(A, Z) &= \sum_{v \in A} \deg_J(v) - e_J(A,B) \geq \lambda D t - p t (t-1 )-\beta \sqrt{t(t-1)}\\
 &\geq (\lambda - \theta - \zeta) D t \geq \frac{\lambda}{2} D t > 0
\end{align*}
holds; contrary to the definition of $A$ and $Z$.

Suppose then that $\theta m < t \leq (m+1)/2$, and set
$$
 x = \frac t m
 \qquad \textrm{and} \qquad
 z = \frac{|Z|}{m} = \frac{m-t+1}{m}.
$$
Observe that $x \geq \theta$ and $z \geq 1/2$, which in turn imply that  $xz \geq \theta/2$. It then follows by the bijumbledness of $J$ that 
\begin{align*}
e_J(A,Z) \geq p|A||Z| - \beta \sqrt{|A||Z|} = D m \sqrt{xz}
 \left(\sqrt{xz} - \frac{\beta}{D} \right) \geq \frac{1}{2} D m xz > 0,
\end{align*}
where the penultimate inequality relies on $\beta/D \leq \zeta \leq \tfrac{1}{2} \sqrt{\theta/2} \leq \tfrac{1}{2} \sqrt{xz}$. This is again a contradiction to our assumption that $E_J(A, Z) = \varnothing$. Hence, Hall's condition holds, implying that $J$ admits a perfect matching. 

\section{Proofs omitted from Section~\ref{sec:rainbow-hamilton} - Hamiltonicity}
\label{app:hamilton}

\subsection{Proof of Lemma~\ref{lem:deterministic-expansion}}\label{app:deterministic-expansion}
We verify the small-set and large-sets clauses of
Definition~\ref{def:C-expander} separately. For the small-set clause, suppose that there exists some set $X\subseteq V(H)$ of size $1 \leq x := |X| < \frac{n}{2C}$ having fewer than $C x$ external neighbours. Let  $\Gamma_H(X) \subseteq Y \subseteq V(H) \setminus X$ be an arbitrary set of size
$Cx$, and let $Z = V(H) \setminus (X \cup Y)$. By definition, we have
\begin{equation}
 e_H(X,Z) = 0.
 \label{eq:expansion-zero-cut}
\end{equation}
We consider two cases, depending on the value of $x$; in both cases we arrive at a contradiction to~\eqref{eq:expansion-zero-cut}. If $x \leq a n/[4(C+1)]$, then
\begin{align*}
 e_H(X,Z) &= \sum_{v\in X}\deg_H(v)-\evec_H(X,X)-e_H(X,Y) \geq
 a q n x - q x^2 - \gamma x - q C x^2 - \gamma \sqrt{C} x \\
 &= \bigl(a q n - q (C+1) x - \gamma (1+\sqrt C)\bigr) x \geq \frac{a}{2} q n x > 0,
\end{align*}
where the first inequality holds by the minimum-degree and the bijumbledness assumptions appearing in the statement of Lemma~\ref{lem:deterministic-expansion}, and the second inequality follows from the first term in~\eqref{eq:expansion-eta} and the assumed upper bound on $x$.

Suppose then that $x > a n/[4(C+1)]$. Since $x < n/(2C)$ and $C \geq 2$, it follows that
$$
 |Z| = n - (C+1)x > \frac{C-1}{2C} n \geq \frac{n}{4}.
$$
Jointly with the assumed lower bound on $x$ this implies
$$
 \sqrt{x|Z|} > \frac{n}{4} \sqrt{\frac{a}{C+1}}.
$$
The second term in~\eqref{eq:expansion-eta} then yields
$$
 \frac{\gamma}{q \sqrt{x|Z|}} \leq \frac{1}{2}.
$$
Using the bijumbledness of $H$ we then conclude that
$$
 e_H(X,Z)
 \geq
 qx|Z|-\gamma\sqrt{x|Z|}
 \geq
 \frac12qx|Z|
 \geq
 \frac18qnx
 >0.
$$
The obtained contradiction implies that $|\Gamma_H(X)| \geq C|X|$, proving Clause~\textup{(a)}.

\medskip

For the large-sets clause, let $A$ and $B$ be disjoint sets of size at
least $n/(2C)$ each. Then,
\begin{align*}
 e_H(A,B)
 &\geq
 q|A||B| - \gamma \sqrt{|A||B|} \geq \sqrt{|A||B|}
 \left(\frac{qn}{2C} - \gamma\right) > 0,
\end{align*}
where the last inequality holds by the upper bound on $\gamma$ appearing in the statement of Lemma~\ref{lem:deterministic-expansion} and the last term in~\eqref{eq:expansion-eta}. Hence, an edge joins every such pair, completing the proof.

\end{document}